\title{Upper bounds for the number of solutions to quartic Thue equations}
 \author{Shabnam Akhtari}
\address{ \noindent CRM, University of Montreal,
 P.O. Box 6128,
Centre-ville Station,
Montreal
H3C 3J7}
 \email {akhtari@crm.umontreal.ca}
\subjclass[2010]{11J86, 11D45}
\keywords{Thue Equations,  Linear Forms in Logarithms}
\begin{document}

%\maketitle

 \newtheorem{thm}{Theorem}[section]
\newtheorem{prop}[thm]{Proposition}
\newtheorem{lemma}[thm]{Lemma}
\newtheorem{cor}[thm]{Corollary}
\newtheorem{conj}[thm]{Conjecture} 

\begin{abstract}

We will give upper bounds for the number of integral solutions to quartic Thue equations.  Our main tool here is  a logarithmic curve  $\phi(x , y)$ that allows us to use
 the theory of linear forms in logarithms. This manuscript  improves the results of author's earlier work  with Okazaki \cite{AO5} by giving special treatments to  forms with respect to their signature.
\end{abstract}

%\begin{keyword} Thue Equation; Linear Forms in Logarithms
%\end{keyword}

%\begin{document}
\maketitle

%\maketitle

\section{Introduction}\label{Intro}

In this paper, we will study  binary quartic forms  with integer coefficients; i.e. polynomials of the shape
$$
F(x , y) = a_{0}x^{4} + a_{1}x^{3}y + a_{2}x^{2}y^{2} + a_{3}xy^{3} + a_{4}y^{4},
$$
with $a_{i} \in \mathbb{Z},$\,  $i \in \{0, 1 , 2 , 3 , 4\}$.
We aim to give upper bounds for the number of solutions to the equation 
\begin{equation}\label{1.25}
\left| F(x , y) \right| = 1.
\end{equation} 
Here we will count $(x , y)$ and $(-x , -y)$ as one solution.   Let $M(F)$ be the Mahler measure of $F(x , y)$. In \cite{AO5} we used some ideas of  Stewart \cite{Ste5} to bound the number of solutions with $|y| < M(F)^6$. We will slightly modify those ideas and use them to give an upper bound for the solutions of (\ref{1.25}) with $|y| < M(F)^{3.5}$.
Then we will improve the main result in   \cite{AO5} by giving better upper bounds for the number of   solutions $(x , y)$ with large $|y| \geq  M(F)^{3.5}$ to equation (\ref{1.25}).
The following  is the main result of this manuscript.
\begin{thm}\label{main5}
Let $F(x , y)$ be an irreducible   quartic binary form  with integer coefficients. %and Mahler measure $M(F)$. 
The Diophantine equation (\ref{1.25}) has at most $U_{F}$ (see the table below) solutions in integers $x$ and $y$, provided that the discriminant of $F$ is greater than $D_{0}$, where $D_{0}$ is an explicitly computable  constant.

\bigskip

\begin{tabular}{l @{ \qquad } l}
\hline
Signature of $F$ & $U_{F}$\\
%\multicolumn{2}{c}{$I_{F}$}\\
\hline
$(0 , 2)$ & $6$\\
$(2 , 1)$ & $14$ \\
$(4 , 0)$ & $26$ \\
\hline \\
\end{tabular}

\end{thm}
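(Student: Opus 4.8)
The plan is to split the solutions of (\ref{1.25}) according to the size of $|y|$ and, among the large ones, according to which root of $F(x,1)$ the rational number $x/y$ best approximates. Factor $F(x,y)=a_{0}\prod_{i=1}^{4}(x-\alpha_{i}y)$ over $\mathbb{C}$. A solution with $y\neq 0$ satisfies $\prod_{i}|x-\alpha_{i}y|=1/|a_{0}|$, so if $\alpha_{i}$ is the nearest root the remaining three factors have size of order $|y|$ and $|x-\alpha_{i}y|$ is forced down to order $|a_{0}|^{-1}|y|^{-3}$; I will call such an $\alpha_{i}$ the root \emph{associated} to $(x,y)$. Since $|x-\alpha_{i}y|\ge|\mathrm{Im}\,\alpha_{i}|\,|y|$ for a non-real root, no solution with $|y|$ larger than a bound depending on $F$ can be associated to a non-real root. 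In particular, when $F$ is definite (signature $(0,2)$) the elementary inequality $|F(x,y)|\gg\max(|x|,|y|)^{4}$ already shows every solution is small, so the large-solution part of the argument is vacuous there.

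For the small solutions, those with $|y|<M(F)^{3.5}$, I would run the modification of Stewart's counting argument announced in the introduction: group these solutions by associated root, use the approximation quality above together with the hypothesis $\mathrm{disc}(F)>D_{0}$ to get a gap principle making the values of $|y|$ attached to a single root grow geometrically, and bound the number of terms of each such progression by an archimedean estimate for the number of lattice points lying close to the real locus of $F(x,y)=\pm 1$. The logarithmic curve $\phi(x,y)$ serves as the natural coordinate for this estimate. Because the number of accumulation directions is governed by the number of real roots, this step is already signature-sensitive; it accounts for the value $6$ in the definite case and supplies the base count in the other two signatures.

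For a large solution, one with $|y|\ge M(F)^{3.5}$, the associated root $\alpha$ must be real. Up to a multiplicative factor of bounded height, $x-\alpha y$ is a unit $\mu\,\eta_{1}^{b_{1}}\cdots\eta_{r}^{b_{r}}$ in a fixed order of $\mathbb{Q}(\alpha)$, with $\mu$ in a finite set and $\eta_{1},\dots,\eta_{r}$ fixed fundamental units, so the solution is encoded by the exponent vector $(b_{1},\dots,b_{r})\in\mathbb{Z}^{r}$. Inserting these factorizations for two further roots $\alpha',\alpha''$ turns
\[
\log\Bigl|\frac{(\alpha-\alpha')\,(x-\alpha'' y)}{(\alpha-\alpha'')\,(x-\alpha' y)}\Bigr|
\]
into a linear form in $\log|\eta_{1}|,\dots,\log|\eta_{r}|$ and logarithms of fixed algebraic numbers, with integer coefficients $b_{1},\dots,b_{r}$; Siegel's identity among $\alpha,\alpha',\alpha''$ shows this form is exponentially small in $B:=\max_{j}|b_{j}|$. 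This is exactly the statement that the exponent vector of every large solution lies in a thin tube around the logarithmic curve $\phi$. A sharp linear forms in logarithms estimate (of Baker--W\"ustholz or Matveev type) bounds the length of the relevant arc of $\phi$, and since the tube is narrower than the lattice spacing a geometry-of-numbers count of lattice points near a convex curve bounds the number of large solutions attached to $\alpha$ by a constant. Summing over the (at most two, resp.\ at most four) real roots and adding the small count produces $14$ in signature $(2,1)$ and $26$ in signature $(4,0)$.

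The principal difficulty I anticipate is extracting the \emph{precise} constants $6$, $14$, $26$ rather than an unspecified $O(1)$. The upper bound coming from linear forms in logarithms is enormous, so the gap principles and the lattice-point estimate must be strong enough that this vast range still contains only a handful of solutions, and the final count is genuinely sensitive to the case analysis --- splitting solutions in each real direction into very large, intermediate, and small ranges, exploiting that a pair of complex conjugate roots gives a more constrained (lower-dimensional) approximation problem, and pushing every error term into the hypothesis $\mathrm{disc}(F)>D_{0}$. The threshold $M(F)^{3.5}$, smaller than the $M(F)^{6}$ used in \cite{AO5}, is a tuning parameter balancing the Stewart count against the range left to linear forms in logarithms, and it is precisely the separate treatment of the three signatures that makes the bookkeeping tight enough to reach the stated bounds.
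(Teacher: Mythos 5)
Your decomposition is the same as the paper's: a threshold at $|y|=M(F)^{3.5}$, a Stewart--Bombieri--Schmidt count below it, the observation that solutions related to a non-real root are automatically small (so signature $(0,2)$ contributes no large solutions), and, above the threshold, the unit factorization of $x-\alpha y$, a Siegel-identity linear form in logarithms that is exponentially small in the solution, and a gap principle phrased in terms of the logarithmic curve $\phi$. The signature-dependent bookkeeping you describe is also how the paper arrives at $6$, $14$, $26$.

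There is, however, one concrete gap in your large-solution step, at the point where you invoke "a sharp linear forms in logarithms estimate" and then count lattice points in the thin tube around $\phi$. Matveev's lower bound involves the product of height parameters $A_j$ for the numbers whose logarithms occur, and with the natural choice --- a fixed system of fundamental units of $\mathbb{Q}(\alpha)$ --- these are controlled only by the regulator, which can be as large as a power of $|D|$. The gap principle only yields $\|\phi(x_3,y_3)\|\gg\exp\left(\|\phi(x_1,y_1)\|/6\right)$ with $\|\phi(x_1,y_1)\|\geq\frac{1}{2}\log\left(|D|^{1/12}/2\right)$, i.e. a lower bound of rough size $|D|^{1/144}$; this does not beat a Matveev bound of size $|D|^{1/2+\epsilon}$, so "discriminant large" does not close the argument as you have set it up. The paper resolves this differently in each signature. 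For $(4,0)$ it removes an exceptional set $\mathfrak{A}$ of at most $2r+2s-2$ solutions (those of smallest $\|\phi\|$, together with the trivial solution $(1,0)$) and shows, via a reduced basis of successive minima of the unit lattice and a count of intersections of the curve $\phi(t)$ with hyperplanes, that for every remaining solution the relevant unit heights are at most $2\|\phi(x_1,y_1)\|$; Matveev then gives the polynomial bound $r_3\ll r_1^4$, which contradicts the exponential gap, and the discarded set accounts for the extra $6$ in the total of $26$. For $(2,1)$ the unit rank is $2$, the product of the two successive minima is comparable to $\mathrm{Vol}(\Lambda)$, and the same volume appears as a factor in the area lower bound of the gap principle, so the regulator cancels between the two sides. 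Your plan needs one of these devices (or an equivalent) before the "constant per real root" conclusion, and hence the stated constants, can legitimately be claimed.
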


The reason for having different upper bounds for forms with different signature in Theorem \ref{main5}, relies  upon the fact that the number fields generated over $\mathbb{Q}$ by a root of the equation $F(x , 1) = 0$ have a rings integers with different numbers of fundamental units.

One can use the method of this manuscript to deal with particular quartic Thue equations, where  more information about the coefficients of the quartic form are available. Therefore, in applications, the strong condition on the size of discriminant may be removed.

The equation
$$
 x^4 -4x^3y - x^2y^2 + 4x y^3 + y^4 =1
$$
has exactly $8$ solutions $(x , y) = (0, 1), (1 , 0), (1 , 1), (-1 , 1), (4 , 1), (-1 , 4), (8 , 7), (-7 , 8)$ (see \cite{MPR5} for a proof). The author is not aware of any binary quartic forms $F(x , y)$ for which the equation $F(x , y) = 1$ has more than $8$ solutions. Magma \cite{Mag} did not find any solution to
$$
 x^4 -4x^3y - x^2y^2 + 4x y^3 + y^4 = -1.
$$

%Here, we are not concern with computational details. However, the method of this paper can be used to solve particular quartic Thue equations with sharper estimates for the heights and constants in lower bounds for linear forms in logarithms. Therefore, one may be able to remove the strong condition on the size of discriminant in applications. 

We will always assume that $F(x , y)$ is irreducible. In fact, when the form $F(x , y)$ is not irreducible over $\mathbb{Z}[x, y]$, we are in a  much simpler situation. In general, equation (\ref{1.25}) may have infinitely many integral solutions; $F(x , y)$ could, for instance, be a power of a linear or indefinite binary quadratic form that represents unity. If $F(x, y)$ is not an irreducible form, however, we may very easily derive a stronger version of our main theorem under the assumption that $F(x , y)$ is neither a $4$th power of a linear form nor a second power of a quadratic form.
Suppose that $F(x , y)$ is reducible and can be factored over $\mathbb{Z}[x , y]$ as follows
$$ 
F(x , y) = F_{1}(x , y) F_{2}(x , y),
$$
with $\textrm{deg}(F_{1}) \leq \textrm{deg}(F_{2})$ and $F_{1}$ irreducible over $\mathbb{Z}[x , y]$. Therefore, the following equations must be satisfied:
\begin{equation}\label{e1}
F_{1}(x , y) = \pm 1
\end{equation}
and  
\begin{equation}\label{e2}
F_{2}(x , y) = \pm 1.
\end{equation}
This means the number of solutions to (\ref{1.25}) is no more than the minimum of number of solutions to (\ref{e1}) and (\ref{e2}). 
First suppose that $F_{1}$ is a linear form. Then the equation (\ref{e2}) can be written as a polynomial of degree  $3$ in $x$ and therefore there are no more than $12$ complex solutions to above equations. 
 Now let us suppose that $F_{1}$ is a quadratic form. Using B\'ezout's theorem from classical  algebraic geometry concerning the number of common points of two plane algebraic curves, we conclude that (\ref{1.25}) has at most $16$ integral solutions.

In this manuscript, we give  new and sharper bounds for  the number of solutions to equation (\ref{1.25}).
The bound given in \cite{AO5} is improved here mostly due to  some adjustment in the definition of the logarithmic  curve $\phi(x , y)$ in Section \ref{TC}.  We also study the geometry of  binary forms with respect to their signature to get   amore precise understanding of the distribution of solutions to  (\ref{1.25}).
We also appeal to a result of Voutier (Proposition \ref{hM5}) to  estimate the height of algebraic numbers in the number field generated over $\mathbb{Q}$ by a root of the equation $F(x , 1) = 0$. These allow us to extend our method introduced in \cite{AO5} for quartic forms that split in $\mathbb{R}$ to all quartic forms.

%----------------------------------------------------------------------------
\section{Preliminaries} \label{Pre}
%-------------------------------------------------------------------------------

Let $f(x) = a_{n}x^{n} + \ldots + a_{1}x + a_{0}$ be the minimal polynomial of an algebraic number $\alpha \neq 0$. Suppose that over $\mathbb{C}$, 
$$
f(x) = a_{n}(x - \alpha_{1})\ldots (x - \alpha_{n}).
$$
We put 
$$
M(\alpha) = M(f) = \left| a_{n}\right| \prod_{i =1}^{n} \max(1 , \left| \alpha_{i}\right|).
$$
$M(\alpha)$ is known as the Mahler measure of $\alpha$.

The Mahler measure of a binary form $G(x , y)$ is defined to be equal to the Mahler measure of the polynomial $G(x , 1)$.   
In \cite{Mah5}, Mahler showed, for polynomial $G$ of degree $n$ and discriminant $D_{G}$, that
\begin{equation}\label{mahD5}
 M(G) \geq \left(\frac{D_{G}}{n^n}\right)^{\frac{1}{2n -2}}.
 \end{equation}

Let $F(x , y)$ be a quartic form that factors over $\mathbb{C}$ as follows
$$
F(x , y) = a_{0}(x - \alpha_{1} y) (x - \alpha_{2} y) (x - \alpha_{3} y)  (x - \alpha_{4} y).
$$
The discriminant $D$ of $F$ is given by
$$
D = D_{F}= a_{0}^{6} (\alpha_{1} - \alpha_{2})^{2}  (\alpha_{1} - \alpha_{3})^{2}   (\alpha_{1} - \alpha_{4})^{2}   (\alpha_{2} - \alpha_{3})^{2}  (\alpha_{2} - \alpha_{4})^{2}  (\alpha_{3} - \alpha_{4})^{2}.
$$
We call forms $F$ and $G$ equivalent if they are equivalent under $GL_{2} (\mathbb{Z})$-action; i.e. if there exist integers $a_{1}$ , $a_{2}$ , $a_{3}$ and $a_{4}$ such that 
$$
F(a_{1} x + a_{2}y , a_{3}x + a_{4}y) = G (x , y) 
$$
for all $x$, $y$, where $a_{1} a_{4} - a_{2} a_{3} = \pm 1$. 
We denote by $N_{F}$ the number of solutions in integers $x$ and $y$ of the Diophantine equation (\ref{1.25}). If $F$ and $G$ are equivalent  then $N_{F} = N_{G}$  and  $D_{F} = D_{G}$.

\bigskip

Suppose there is a solution $(x_{0} , y_{0})$ to  equation (\ref{1.25}). Since $$\gcd(x_{0} , y_{0}) = 1,$$ there exist integers $x_{1}$, $y_{1} \in \mathbb{Z}$ with 
$$
x_{0} y_{1} - x_{1} y_{0} =1.
$$
Then 
$$
F^{*}(1 , 0) = 1,
$$
where
$$
F^{*}(x , y) = F(x_{0} x + x_{1}y , y_{0}x + y_{1}y).
$$
Therefore, $F^{*}$ is a monic form equivalent to $F$.
From now on we will assume  $F$ is monic.

Let $\mathbb{Q}(\alpha_{1})^{\sigma}$ be the embeddings of the real number field $\mathbb{Q}(\alpha_{1})$ in $\mathbb{R}$,  $1 \leq \sigma \leq n$, where $\{\alpha_{1}, \alpha_{2} , \ldots, \alpha_{n} \}$ are roots of $F(x , 1) = 0$. We respectively have $n$ Archimedean valuations of $\mathbb{Q}(\alpha_{1})$:
$$
|\rho|_{\sigma} = \left|\rho^{(\sigma)}\right| , \   \   1 \leq \sigma \leq n.
$$
We enumerate simple ideals of $\mathbb{Q}(\alpha)$ by indices $\sigma > n$ and define
 non-Archimedean valuation of $\mathbb{Q}(\alpha)$ by the formulas
  $$
   |\rho | _{\sigma} = (  \textrm{Norm} \  \mathfrak{p})^{-k}, 
   $$ 
 where
 $$\
  k = \textrm{ord}_ {\mathfrak{p}} (\alpha) , \   \mathfrak{p} = \mathfrak{p}_{\sigma} , \   \sigma > n ,
  $$
 for any $\rho \in \mathbb{Q}^{*} (\alpha)$.  Then we have the \emph{product formula} :
 $$
  \prod_{1}^{\infty} |\rho|_{\sigma} = 1 ,  \   \rho \in  \mathbb{Q}(\alpha) .
  $$
  Note that $|\rho|_{\sigma} \neq 1$ for only finitely many $\rho$. We should also remark that if $\sigma_{2} = \bar{\sigma}_{1}$, i.e., 
$$\sigma_{2}(x) = \bar{\sigma}_{1}(x) \qquad \textrm{for} \qquad  x \in \mathbb{Q}(\alpha),
$$
then the valuations $|\, .\, |_{\sigma_{1}}$ and $|\, .\, |_{\sigma_{2}}$ are equal.
 We define the \emph{absolute logarithmic height} of $\rho$ as
 $$
 h(\rho) = \frac{1}{2n} \sum _{\sigma = 1}^{\infty}\left|\log |\rho|_{\sigma}\right| . 
 $$
  
\begin{prop}
For every non-zero algebraic number $\alpha$, we have $h(\alpha^{-1}) = h(\alpha)$. For algebraic numbers $\alpha_{1}, \ldots, \alpha_{n}$, we have 
$$
h(\alpha_{1} \ldots \alpha_{n}) \leq h(\alpha_{1})+ \ldots + h(\alpha_{n})
$$ 
and
$$
h(\alpha_{1} + \ldots + \alpha_{n}) \leq \log n + h(\alpha_{1})+ \ldots + h(\alpha_{n}).
$$
\end{prop}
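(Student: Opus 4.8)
The plan is to argue directly from the definition $h(\rho)=\frac{1}{2n}\sum_{\sigma=1}^{\infty}\bigl|\log|\rho|_{\sigma}\bigr|$ and the product formula $\prod_{\sigma}|\rho|_{\sigma}=1$, treating each valuation separately. The identity $h(\alpha^{-1})=h(\alpha)$ requires no preparation: since $\mathbb{Q}(\alpha)=\mathbb{Q}(\alpha^{-1})$ the same family of valuations occurs, and $|\alpha^{-1}|_{\sigma}=|\alpha|_{\sigma}^{-1}$ for every $\sigma$, so $\bigl|\log|\alpha^{-1}|_{\sigma}\bigr|=\bigl|\log|\alpha|_{\sigma}\bigr|$; summing and dividing by $2n$ gives the claim.

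For the two inequalities I would first enlarge the ground field to a single number field $K$, of degree $N=[K:\mathbb{Q}]$, containing $\alpha_{1},\dots,\alpha_{n}$, using the fact that $h$ is intrinsic to the algebraic number and independent of the field in which it is computed. This rests on the compatibility of the chosen normalized valuations with field extension, together with $\sum_{w\mid v}[K_{w}:k_{v}]=[K:k]$ for a place $v$ of an intermediate field $k\subseteq K$ and its extensions $w$, so that $\frac{1}{2N}\sum_{\sigma}\bigl|\log|\rho|_{\sigma}\bigr|$ computed over the valuations of $K$ still equals $h(\rho)$. Granting this, multiplicativity of each $|\cdot|_{\sigma}$ and the triangle inequality for the usual absolute value on $\mathbb{R}$ give
\[
h\Bigl(\prod_{i=1}^{n}\alpha_{i}\Bigr)=\frac{1}{2N}\sum_{\sigma}\Bigl|\sum_{i=1}^{n}\log|\alpha_{i}|_{\sigma}\Bigr|\le\frac{1}{2N}\sum_{\sigma}\sum_{i=1}^{n}\bigl|\log|\alpha_{i}|_{\sigma}\bigr|=\sum_{i=1}^{n}h(\alpha_{i}).
\]

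For the sum it is convenient to pass to the equivalent expression $h(\rho)=\frac{1}{N}\sum_{\sigma}\log^{+}|\rho|_{\sigma}$, where $\log^{+}t=\log\max(1,t)$; this follows from the identity $\bigl|\log|\rho|_{\sigma}\bigr|=2\log^{+}|\rho|_{\sigma}-\log|\rho|_{\sigma}$ after summing over $\sigma$ and invoking $\sum_{\sigma}\log|\rho|_{\sigma}=0$. Now split the valuations of $K$ into the $N$ Archimedean ones (a complex place being counted once for each of its two conjugate embeddings, as in the text) and the non-Archimedean ones. At a non-Archimedean $\sigma$ the ultrametric inequality gives $\bigl|\sum_{i}\alpha_{i}\bigr|_{\sigma}\le\max_{i}|\alpha_{i}|_{\sigma}$, whence $\log^{+}\bigl|\sum_{i}\alpha_{i}\bigr|_{\sigma}\le\sum_{i}\log^{+}|\alpha_{i}|_{\sigma}$; at an Archimedean $\sigma$ one has $\bigl|\sum_{i}\alpha_{i}\bigr|_{\sigma}\le n\max_{i}|\alpha_{i}|_{\sigma}$, whence $\log^{+}\bigl|\sum_{i}\alpha_{i}\bigr|_{\sigma}\le\log n+\sum_{i}\log^{+}|\alpha_{i}|_{\sigma}$. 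Summing over all $\sigma$, the term $\log n$ is picked up once for each of the $N$ Archimedean places; dividing by $N$ gives $h\bigl(\sum_{i}\alpha_{i}\bigr)\le\log n+\sum_{i}h(\alpha_{i})$. The only point that genuinely needs care is the field-independence of $h$ used in the reduction step, i.e.\ that the normalization $1/(2[K:\mathbb{Q}])$ together with the chosen valuations behaves consistently under enlarging $K$; all the rest is the triangle inequality applied place by place, with the Archimedean places alone responsible for the $\log n$ loss. One could also sidestep this by defining $h$ relative to the field generated by all the $\alpha_{i}$ from the outset and using the product formula only there.
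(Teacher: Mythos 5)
Your proof is correct, but there is nothing in the paper to compare it against: the paper does not prove this proposition at all, it simply refers the reader to Bugeaud's article \cite{Bug5}. What you supply is the standard self-contained argument, working place by place from the definition $h(\rho)=\frac{1}{2n}\sum_{\sigma}\bigl|\log|\rho|_{\sigma}\bigr|$ and the product formula. All the individual steps check out: the identity $\bigl|\log t\bigr|=2\log^{+}t-\log t$ combined with $\sum_{\sigma}\log|\rho|_{\sigma}=0$ does convert the paper's normalization into the usual $h(\rho)=\frac{1}{N}\sum_{\sigma}\log^{+}|\rho|_{\sigma}$; the product inequality is then immediate from multiplicativity of each valuation and subadditivity of $\log^{+}$ (or of $|\log(\cdot)|$, as you use); and the sum inequality follows from the ultrametric inequality at the finite places and $\bigl|\sum_{i}\alpha_{i}\bigr|_{\sigma}\le n\max_{i}|\alpha_{i}|_{\sigma}$ at each of the $N$ Archimedean embeddings, which is exactly where the single factor $\log n$ comes from after dividing by $N$. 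You are also right to single out the invariance of $h$ under enlarging the ground field as the only point needing genuine care — the paper's definition is phrased inside a fixed field $\mathbb{Q}(\alpha)$, so either the compatibility $\sum_{w\mid v}[K_{w}:k_{v}]=[K:k]$ or your suggested workaround (defining $h$ in the compositum from the outset) is needed before the $\alpha_{i}$ can be treated simultaneously. In short: your argument is a complete and correct proof of a statement the paper leaves to a reference, and it buys the reader self-containedness at the cost of a short digression on the behaviour of normalized valuations under field extension.
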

\begin{proof}
See \cite{Bug5} for proof.
\end{proof}

\begin{prop}\label{hM5}(Voutier \cite{Vou5})
 Suppose $\alpha$ is a non-zero algebraic number of degree $n$ which is not a root of unity. If $n \geq 2$ then
 \begin{equation*}
 h(\alpha) = \frac{1}{n} \log M(\alpha)>\frac{1}{4} \left(\frac{\log \log n}{\log n}\right)^3.
 \end{equation*}
 \end{prop}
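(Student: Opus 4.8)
This is Voutier's fully explicit version of Dobrowolski's theorem, so the plan is to reproduce Dobrowolski's resultant argument and then discharge every estimate with explicit constants. Write the minimal polynomial of $\alpha$ as $P(X) = a\prod_{i=1}^{n}(X - \alpha_{i}) \in \mathbb{Z}[X]$, so that $a \geq 1$ and $M(\alpha) = a\prod_{i}\max(1, |\alpha_{i}|)$, and argue by contradiction, assuming $\log M(\alpha)$ is smaller than the asserted bound. The basic object is, for a prime $p$, the resultant
$$
\operatorname{Res}\bigl(P(X), P(X^{p})\bigr) \;=\; a^{\,n(p+1)}\prod_{i,j}(\alpha_{i}^{\,p} - \alpha_{j}) \;\in\; \mathbb{Z},
$$
and the engine is the congruence $P(X^{p}) \equiv P(X)^{p} \pmod{p}$ in $\mathbb{Z}[X]$.

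First I would record the two competing estimates for this integer. Writing $P(X^{p}) = P(X)^{p} + p\,Q(X)$ with $Q \in \mathbb{Z}[X]$ gives $P(\alpha_{i}^{\,p}) = p\,Q(\alpha_{i})$, whence $p^{n}$ divides $\operatorname{Res}(P(X), P(X^{p}))$; and this integer is nonzero, for otherwise some $\alpha_{i}^{\,p}$ is a conjugate of $\alpha$, and iterating the $p$-power map on the finite set of conjugates then makes $\alpha$ a root of unity. Thus $|\operatorname{Res}(P(X), P(X^{p}))| \geq p^{n}$. On the archimedean side, $|\alpha_{i}^{\,p} - \alpha_{j}| \leq 2\max(1,|\alpha_{i}|)^{p}\max(1,|\alpha_{j}|)$ yields $|\operatorname{Res}(P(X), P(X^{p}))| \leq 2^{n^{2}} M(\alpha)^{n(p+1)}$. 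For a single prime these bounds are far too weak, and the real content of Dobrowolski's argument is a way of combining the divisibility coming from all primes $p \leq N$ without losing it: one works with suitable resultants built from the polynomials $P(X^{p})$, $p \leq N$ — equivalently with products over pairs $(i,j)$ of the factors $\alpha_{i}^{\,p} - \alpha_{j}^{\,q}$ — using that the conjugates $\alpha_{i}^{\,p}$ are pairwise distinct for distinct $p$ (because $\alpha$ is not a root of unity), and bounding by a counting lemma the number of near-coincidences among them. I would follow that combination, tracking the number of primes used; balanced against the archimedean size and optimized over $N$, it produces a lower bound for $\log M(\alpha)$ of the shape $c\,(\log\log n / \log n)^{3}$, which is Dobrowolski's theorem.

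The remaining — and genuinely delicate — step is to pin the constant to $\tfrac14$ uniformly for \emph{every} $n \geq 2$. Here I would replace all asymptotic prime estimates by explicit ones (Rosser--Schoenfeld-type bounds for $\vartheta(N)$ and $\pi(N)$), carry the choice of $N$ and the final optimization through with genuine inequalities rather than with $\asymp$, and bound the near-coincidence and $2^{n^{2}}$ contributions explicitly; and then treat the finitely many small degrees $n \leq n_{0}$, for which the asymptotic argument is vacuous, by a separate argument — for instance invoking Smyth's bound $M(\alpha) \geq 1.3247\ldots$ for non-reciprocal $\alpha$ to reduce to the reciprocal case, and settling those by known explicit lower bounds for Mahler measures of small degree (or by the resultant argument above run with concrete small parameters). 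I expect this explicit bookkeeping — optimizing the parameters with hard inequalities and cleaning up the small-degree cases — to be the main obstacle; the skeleton of the proof is entirely Dobrowolski's, and the whole point of Voutier's work is to squeeze a clean universal constant out of it. An interpolation-determinant variant in the style of Laurent could replace the resultant machinery, but the same explicit-constant difficulty would remain.
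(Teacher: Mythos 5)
First, a point of comparison: the paper offers no proof of this proposition at all --- it is imported verbatim from Voutier's 1996 paper in Acta Arithmetica, with a bare citation. So the only question is whether your blind attempt would stand on its own as a proof of the stated inequality. Your outline does correctly identify the architecture of the actual proof: Dobrowolski's resultant $\operatorname{Res}(P(X),P(X^{p}))$, the congruence $P(X^{p})\equiv P(X)^{p}\pmod{p}$ forcing divisibility by $p^{n}$, the nonvanishing via the root-of-unity argument, the archimedean upper bound, the need to combine the contributions of many primes $p\le N$ rather than one, and a separate treatment of small degrees (Voutier does indeed use Smyth's theorem to dispose of the non-reciprocal case and handles small reciprocal degrees by explicit computation). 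As a description of the right strategy, it is accurate.

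But as a proof it has a genuine gap, and the gap is precisely the content of the statement. The proposition is not ``Dobrowolski's theorem holds''; it is the specific claim that the constant $\tfrac14$ works uniformly for every $n\ge 2$. Everything that distinguishes this from the qualitative result --- the explicit counting lemma controlling near-coincidences among the $\alpha_{i}^{p}$, the explicit Rosser--Schoenfeld-type prime bounds, the actual choice of $N$ as a function of $n$, the hard-inequality optimization, and the verification of the finitely many small degrees where the asymptotic argument gives nothing --- is deferred in your write-up with phrases like ``I expect this explicit bookkeeping to be the main obstacle.'' Since the inequality is false for no margin to spare at small $n$ (at $n=2$ the right-hand side is negative, but already for moderate $n$ the claim is delicate), none of these steps can be skipped: without them you have proved only that $h(\alpha)\gg_{?}(\log\log n/\log n)^{3}$ with an unspecified constant, which is a strictly weaker statement than the one quoted. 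The appropriate resolution here, as in the paper, is to cite Voutier; if you intend to actually prove the proposition, the entire quantitative body of that paper is what remains to be supplied.
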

 
Let $\alpha$ and $\beta$ be two algebraic numbers. Then the following inequalities hold (see \cite{Bug5}):
\begin{equation}\label{bj15}
h(\alpha + \beta) \leq \log 2 + h (\alpha) + h(\beta)
\end{equation}
and
 \begin{equation}\label{bj25}
h(\alpha  \beta) \leq  h (\alpha) + h(\beta).
\end{equation}

 \begin{lemma}\label{mahl5}(Mahler \cite{Mah5})
 If $a$ and $b$ are distinct zeros of polynomial $P(x)$ with degree $n$, then we have
\begin{equation*}
  | a - b | \geq \sqrt{3} (n)^{-(n+2)/2} M(P)^{-n+1},
  \end{equation*}
 where $M(P)$ is the Mahler measure of $P$.
  \end{lemma}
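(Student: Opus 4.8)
The plan is to derive this from two classical facts: the discriminant of an integer polynomial with no repeated zeros is a nonzero integer, hence has absolute value at least $1$; and Hadamard's inequality for determinants. As in Mahler's original argument I assume $P\in\mathbb{Z}[x]$ with distinct roots, which is the setting in which the lemma is applied below; this is also the circle of ideas behind the discriminant estimate (\ref{mahD5}) above. Write $P(x)=a_n\prod_{i=1}^{n}(x-\alpha_i)$ and label the two given zeros so that $a=\alpha_1$ and $b=\alpha_2$ with $|\alpha_1|\geq|\alpha_2|$. Let $V=(\alpha_i^{j-1})_{1\leq i,j\leq n}$ be the Vandermonde matrix of the roots. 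Since $(\det V)^2=D(P)/a_n^{2n-2}$ and $|D(P)|\geq 1$, one obtains $|\det V|\geq |a_n|^{-(n-1)}$.

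Next I would isolate the factor $|a-b|$. Subtracting the $\alpha_2$-row of $V$ from the $\alpha_1$-row and then dividing that row by $\alpha_1-\alpha_2$ produces a matrix $\widetilde V$ with $\det\widetilde V=\det V/(\alpha_1-\alpha_2)$; its altered row has $j$-th entry $\dfrac{\alpha_1^{j-1}-\alpha_2^{j-1}}{\alpha_1-\alpha_2}=\sum_{k=0}^{j-2}\alpha_1^{k}\alpha_2^{j-2-k}$ (which is $0$ for $j=1$). Hence $|a-b|=|\det V|/|\det\widetilde V|$, and it remains to bound $|\det\widetilde V|$ from above. By Hadamard's inequality this is at most the product of the Euclidean norms of its rows: each unchanged row $(1,\alpha_i,\dots,\alpha_i^{n-1})$ has norm at most $\sqrt{n}\,\max(1,|\alpha_i|)^{n-1}$, while — using the choice $|\alpha_1|\geq|\alpha_2|$, which gives $\bigl|\sum_{k=0}^{j-2}\alpha_1^{k}\alpha_2^{j-2-k}\bigr|\leq (j-1)\max(1,|\alpha_1|)^{j-2}$ — the altered row has norm at most $\bigl(\tfrac{(n-1)n(2n-1)}{6}\bigr)^{1/2}\max(1,|\alpha_1|)^{n-2}$.

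Multiplying these estimates, using $|a_n|\geq 1$ and $M(P)=|a_n|\prod_i\max(1,|\alpha_i|)$, and noting that the max-moduli collapse because $\max(1,|\alpha_1|)^{n-2}\leq\max(1,|\alpha_1|)^{n-1}$, I obtain $|\det\widetilde V|\leq \bigl(\tfrac{(n-1)n(2n-1)}{6}\bigr)^{1/2}n^{(n-1)/2}M(P)^{n-1}|a_n|^{-(n-1)}$. Combining this with $|\det V|\geq |a_n|^{-(n-1)}$ yields $|a-b|\geq \bigl(\tfrac{(n-1)n(2n-1)}{6}\bigr)^{-1/2}n^{-(n-1)/2}M(P)^{-(n-1)}$, and the stated bound $|a-b|\geq\sqrt{3}\,n^{-(n+2)/2}M(P)^{-n+1}$ then follows from the elementary inequality $(n-1)(2n-1)\leq 2n^2$. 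I expect the only genuine obstacle to be the Hadamard step: one must eliminate the row of the \emph{larger} of the two roots and estimate the symmetric polynomials $\sum_{k=0}^{j-2}\alpha_1^{k}\alpha_2^{j-2-k}$ sharply, since the crude bound $\bigl(\max(1,|\alpha_1|)\max(1,|\alpha_2|)\bigr)^{j-2}$ introduces an extra power of $\max(1,|\alpha_2|)$ and would destroy the exponent $n-1$ of $M(P)$. Everything else is routine bookkeeping.
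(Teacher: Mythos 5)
Your proof is correct, and it is essentially Mahler's original argument (Vandermonde determinant, a row subtraction to extract the factor $a-b$, Hadamard's inequality, and the relation $(\det V)^2 = D(P)/a_n^{2n-2}$), which the paper does not reproduce but simply cites from \cite{Mah5}. You also correctly identified the one point the paper leaves implicit: Mahler's theorem carries a factor $|D(P)|^{1/2}$, and the stated form requires $|D(P)|\geq 1$, i.e.\ integer coefficients and separability, which holds in the paper's setting since $P=F(x,1)$ is irreducible over $\mathbb{Q}$.
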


    \begin{lemma}\label{esug}
 Let $f(x) = a_{n}x^{n} + \ldots + a_{1}x + a_{0}$ be an irreducible  polynomial of degree $n$ and $\alpha_{m}$ be one of its roots. For $f'(x)$ the derivative of $f$, we have
$$
2^{-(n-1)^2} \frac{\left|D_{f}\right|}{M(f)^{2n -2}}\leq    |f'(\alpha_{m})| \leq \frac{n(n+1)}{2} H(f) \left(\max (1 , |\alpha_{m}|)\right)^{n-1},
$$
where $D_{f}$ is the discriminant, $M(f)$ is the Mahler measure and $H(f)$ is the naive height of $f$.
 \end{lemma}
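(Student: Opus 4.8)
The plan is to prove the two inequalities separately; both are elementary, and the only point that needs care is keeping track of the various powers that appear in the lower bound.

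For the upper bound I would write $f'(x) = \sum_{k=1}^{n} k a_{k} x^{k-1}$, evaluate at $\alpha_{m}$, and apply the triangle inequality:
$$
|f'(\alpha_{m})| \leq \sum_{k=1}^{n} k \, |a_{k}| \, |\alpha_{m}|^{k-1} \leq H(f) \left(\max(1 , |\alpha_{m}|)\right)^{n-1} \sum_{k=1}^{n} k .
$$
Since $\sum_{k=1}^{n} k = n(n+1)/2$, this is exactly the right-hand estimate, and no irreducibility is needed here.

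For the lower bound the starting point is the factorization $f(x) = a_{n} \prod_{i=1}^{n} (x - \alpha_{i})$, which gives $f'(\alpha_{m}) = a_{n} \prod_{i \neq m} (\alpha_{m} - \alpha_{i})$. Taking the product over all $m$ and comparing with the definition of the discriminant (equivalently, using $D_{f} = (-1)^{n(n-1)/2} a_{n}^{-1} \mathrm{Res}(f , f')$ and $\mathrm{Res}(f , f') = a_{n}^{n-1} \prod_{m} f'(\alpha_{m})$) yields
$$
\prod_{m=1}^{n} |f'(\alpha_{m})| = \frac{|D_{f}|}{|a_{n}|^{n-2}} ,
$$
so that $|f'(\alpha_{m})| = |D_{f}| \, \big( |a_{n}|^{n-2} \prod_{k \neq m} |f'(\alpha_{k})| \big)^{-1}$. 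The task then reduces to bounding each $|f'(\alpha_{k})|$ with $k \neq m$ from above in terms of $M(f)$. Bounding each of the $n-1$ factors of $f'(\alpha_{k})$ by $|\alpha_{k} - \alpha_{i}| \leq 2 \max(1 , |\alpha_{k}|)\max(1 , |\alpha_{i}|)$ and using $\prod_{i=1}^{n} \max(1 , |\alpha_{i}|) = M(f)/|a_{n}|$ gives $|f'(\alpha_{k})| \leq 2^{n-1} M(f) \left(\max(1 , |\alpha_{k}|)\right)^{n-2}$; multiplying over the $n-1$ indices $k \neq m$ and inserting $\prod_{k \neq m} \max(1 , |\alpha_{k}|) = M(f) \big( |a_{n}| \max(1 , |\alpha_{m}|) \big)^{-1}$ produces
$$
\prod_{k \neq m} |f'(\alpha_{k})| \leq \frac{2^{(n-1)^{2}} M(f)^{2n-3}}{|a_{n}|^{n-2} \left(\max(1 , |\alpha_{m}|)\right)^{n-2}} .
$$
Substituting this back, the powers of $|a_{n}|$ cancel; since $M(f) \geq |a_{n}| \geq 1$ we may replace the exponent $2n-3$ by $2n-2$, and since $\max(1 , |\alpha_{m}|) \geq 1$ the surviving factor only helps, which leaves $|f'(\alpha_{m})| \geq 2^{-(n-1)^{2}} |D_{f}| / M(f)^{2n-2}$.

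I do not expect a genuine obstacle; the step requiring the most attention is the chain of crude estimates in the lower bound, where one has to arrange the coarse bounds so that the spurious powers of $|a_{n}|$ and of $\max(1 , |\alpha_{m}|)$ cancel or act in our favour rather than weakening the inequality. Irreducibility of $f$ enters only to ensure that $f$ is separable, so that $D_{f} \neq 0$ and $\alpha_{m}$ is a simple root of $f$; one also tacitly assumes $n \geq 2$, so that all the exponents above are nonnegative.
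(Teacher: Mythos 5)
Your proof is correct and rests on the same two ingredients as the paper's: the elementary inequality $|\alpha_i - \alpha_j| \le 2\max(1,|\alpha_i|)\max(1,|\alpha_j|)$ together with the expression of $|D_f|$ as a product of root differences. The paper gets the lower bound by comparing $\prod_{i\neq m}|\alpha_i-\alpha_m|$ directly with the full double product $\prod_j\prod_{i\neq j}|\alpha_i-\alpha_j|$ (paying $2^{(n-1)^2}$ for the omitted factors), whereas you route the identical bookkeeping through the resultant identity $\prod_m|f'(\alpha_m)| = |D_f|/|a_n|^{n-2}$ and upper-bound the complementary factors; this is only a reorganization, and it in fact leaves you with the marginally stronger bound $2^{-(n-1)^2}\,|D_f|\,\bigl(\max(1,|\alpha_m|)\bigr)^{n-2}M(f)^{-(2n-3)}$.
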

 \begin{proof}
The right hand side inequality is trivial by noticing that
$$
f'(x) = n a_{n}x^{n-1} + \ldots + a_{1}x.
$$
%$$
%|f'(\alpha_{m})|\leq n |f'(\alpha_{m})|
%$$
To see the left hand side inequality,  observe that for $\alpha_{i}$, $\alpha_{j}$, two distinct roots of $f(x)$, we have
 $$
 |\alpha_{i} - \alpha_{j}| \leq 2 \max(1 , |\alpha_{i}|) \max (1 , |\alpha_{j}|). 
 $$
Then
 \begin{eqnarray*}
 |f'(\alpha_{m})| & = & \prod_{i=1, i \neq m}^{n} |\alpha_{i} - \alpha_{m}| \geq \prod_{i=1, i \neq m}^{n} \frac{|\alpha_{i} - \alpha_{m}|}{\max(1 , |\alpha_{i}|) \max (1 ,
 |\alpha_{m}|)}\\
 & \geq & 2^{n-1 - n(n-1)} \prod _{j=1}^{n}\prod_{i=1, i \neq j}^{n} \frac{|\alpha_{i} - \alpha_{j}|}{\max(1 , |\alpha_{i}|) \max (1 ,
 |\alpha_{j}|)}\\
 & = & 2^{-(n-1)^2} \frac{\left|D_{F}\right|}{M(F)^{2n -2}}.
 \end{eqnarray*}
 \end{proof}

Suppose that $\mathbb{K}$ is an algebraic number field of degree $d$ over $\mathbb{Q}$ embedded in $\mathbb{C}$. If $\mathbb{K} \subset \mathbb{R}$, we put $\chi = 1$, and otherwise $\chi = 2$. We are given numbers $\alpha_{1} , \ldots \alpha_{n} \in \mathbb{K}^{*}$ with absolute logarithm heights $h(\alpha_{j})$, $1\leq j \leq n$. Let $\log \alpha_{1}$, $\ldots$, $\log \alpha_{n}$ be arbitrary fixed non-zero values of the logarithms. Suppose that 
$$
A_{j} \geq \max \{dh(\alpha_{j}) , |\log \alpha_{j}| \}, \  \   1 \leq j \leq n.
$$
Now consider the linear form 
$$
L = b_{1}\log\alpha_{1} + \ldots + b_{n}\log\alpha_{n},
$$
with $b_{1}, \ldots , b_{n} \in \mathbb{Z}$ and with the parameter 
$$B = max\{1 , \max\{b_{j}A_{j}/A_{n}: \  1\leq j  \leq n\}\}.
$$
 For brevity we put
$$
\Omega = A_{1} \ldots A_{n},
$$
$$
C(n) = C(n , \chi) = \frac{16}{n!\chi}e^{n}(2n + 1 + 2 \chi)(n + 2) (4n + 4)^{n + 1}\left(\frac{1}{2}en\right)^{\chi}  ,
$$
$$
C_{0} = \log (e^{4.4n + 7}n^{5.5}d^{2}\log (en)),
$$
$$
W_{0} = \log(1.5eBd\log(ed)).
$$
 The following is the main result of \cite{Mat25}.
 \begin{prop}[Matveev]\label{mat5}
If $\log\alpha_{1} , \ldots , \log\alpha_{n}$ are linearly independent over $\mathbb{Z}$ and
$b_{n} \neq 0$, then 
$$
\log|L| > -C(n) C_{0} W_{0}d^{2}\Omega.
$$
\end{prop}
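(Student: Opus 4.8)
The statement is Matveev's lower bound for a linear form in logarithms of algebraic numbers; a complete proof is far beyond the scope of this paper, so we simply record it from \cite{Mat25} and indicate the shape of the argument one would give. It belongs to the circle of ideas going back to Gel'fond and Baker and subsequently refined by Baker--W\"ustholz, Waldschmidt, Laurent--Mignotte--Nesterenko and others; what distinguishes Matveev's version, and makes it the right tool here, is that the dependence of the constant $C(n)$ on the number $n$ of logarithms is essentially best possible (it grows only like $c^{n}$ up to lower-order factors), which is precisely what allows one to convert it into a bound on the \emph{number} of solutions of a Thue equation rather than merely on their size.

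The plan of such a proof runs as follows. Assume for contradiction that $|L|$ is smaller than the asserted bound. First, by Siegel's lemma (a pigeonhole argument over a system of linear equations) one constructs a nontrivial auxiliary object adapted to the form $L$: either an explicit polynomial $P(z_{1},\dots,z_{n})$ with rational integer coefficients vanishing to high order in the direction determined by the $b_{j}$, or, in the interpolation-determinant variant, a determinant built from monomials in the $\alpha_{j}$ and their powers. The hypothesis that $|L|$ is extremely small forces the analytic size of this object at the relevant lattice points to be correspondingly small. One then runs an extrapolation argument: the smallness propagates, so the object vanishes at many more points than it was designed to, and a zero-multiplicity estimate (of Philippon--Waldschmidt type, or the combinatorial estimates Matveev develops himself) eventually contradicts the non-triviality guaranteed by Siegel's lemma---provided all the parameters have been chosen compatibly. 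The last ingredient is the ``fundamental inequality'': a nonzero algebraic number manufactured in this way cannot be too close to $0$, by a Liouville-type bound in terms of the heights $h(\alpha_{j})$, which are exactly the quantities controlled by the $A_{j}$ in the statement.

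The genuinely hard part---and the reason Matveev's paper is long and technical---is the simultaneous optimization of all of these parameters (the degrees and vanishing orders in the auxiliary construction, and the number of extrapolation steps) so as to keep the growth in $n$ under control, together with a delicate induction/descent on $n$ that avoids the lossy estimates of earlier treatments. For the applications in this manuscript $n$ will be small and fixed (the logarithms will come from a system of fundamental units of a quartic field together with a few associated algebraic numbers), so the full $n$-uniformity is more than we strictly need; we nonetheless quote the result in the above explicit form because the precise shapes of $C(n)$, $C_{0}$ and $W_{0}$ feed directly into the numerical constants $U_{F}$ and $D_{0}$ of Theorem \ref{main5}.
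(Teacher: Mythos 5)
Your proposal handles this exactly as the paper does: Proposition~\ref{mat5} is quoted as the main result of \cite{Mat25} without proof, and your deferral to that reference (together with a standard sketch of the auxiliary-construction/extrapolation/zero-estimate strategy) matches the paper's treatment. Nothing further is required here.
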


%-------------------------------------------------------------------------------------------------------

\section{Summary of the Proof}\label{sp}
%---------------------------------------------------------------------------------------------

 Suppose that $(x , y)$ is an integral solution to equation (\ref{1.25}). Then we have
$$
(x - \alpha_{1} y) (x - \alpha_{2} y) (x - \alpha_{3}y) (x - \alpha_{4}y) = \pm 1.
$$
Therefore, for some $ 1 \leq i \leq 4$,
$$
\left| x - \alpha_{i} y\right| \leq 1.
$$
\textbf{ Definition}. We say the pair of solution $(x , y)$ is {\emph related} to $\alpha_{i}$ if
$$
\left| x - \alpha_{i} y\right| = \min_{1\leq j\leq 4} \left| x - \alpha_{j} y\right|.
 $$

\begin{prop}\label{P1}
Let $F(x , y)$ be an irreducible monic binary quartic form  with integer coefficients and Mahler measure $M(F)$. The Diophantine equation (\ref{1.25}) has at most $N_{1}$ (see the table below) solutions in integers $x$ and $y$ with $0< y<  M(F)^{3.5}$, provided that the discriminant of $F$ is greater than $D_{0}$, where $D_{0}$ is an explicitly computable  constant.

\bigskip

\begin{tabular}{l @{ \qquad } l}
\hline
Signature of $F$ & $N_{1}$\\
%\multicolumn{2}{c}{$I_{F}$}\\
\hline
$(0 , 2)$ & $5$\\
$(2 , 1)$ & $9$ \\
$(4 , 0)$ & $12$ \\
\hline \\
\end{tabular}
\end{prop}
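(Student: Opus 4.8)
The plan is to classify the solutions $(x,y)$ with $0 < y < M(F)^{3.5}$ according to which root $\alpha_i$ they are related to, and to bound the number of solutions in each class by a small constant. Since $F$ is monic, one of its roots may be real while others are complex; the signature $(s,t)$ (with $s$ real roots and $t$ pairs of complex roots) governs how many classes there are and, crucially, how tightly the ``gap principle'' controls solutions related to a given root. First I would establish a \emph{gap principle}: if $(x,y)$ and $(x',y')$ are two distinct solutions both related to the same $\alpha_i$, with $0 < y \le y'$, then $y'$ must exceed a fixed power of $y$ times a quantity involving the separation of $\alpha_i$ from the nearest root. The input here is that $|x - \alpha_i y| \le 1$ forces $x$ to be the nearest integer to $\alpha_i y$ up to a controlled error, and subtracting two such near-equalities and using Lemma~\ref{mahl5} (or Lemma~\ref{esug}) to lower-bound $|\alpha_i - \alpha_j|$ in terms of $M(F)$ yields a multiplicative gap between consecutive $y$-values.

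Next I would count: solutions related to a real root $\alpha_i$ come in an ordered chain $y_1 < y_2 < \cdots$, and the gap principle forces this chain to grow at least geometrically (roughly $y_{k+1} \gg y_k \cdot (\text{something})$), so that only $O(1)$ of them can satisfy $y_k < M(F)^{3.5}$; here the constraint $y < M(F)^{3.5}$ together with the explicit gap exponent gives the precise numbers in the table. Solutions related to a \emph{complex} root $\alpha_i$ are even more restricted: for a complex $\alpha_i = u + iv$ with $v \neq 0$ not too small, the condition $|x - \alpha_i y| \le 1$ already forces $|y| \le 1/|v|$, and $|v|$ is bounded below by a negative power of $M(F)$ via Lemma~\ref{mahl5}, so that at most a couple of solutions with $y > 0$ can be related to any given complex root. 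This is why forms with more complex roots (smaller signature) admit fewer solutions: a form of signature $(0,2)$ has all four roots complex, hence every solution is subject to the stronger complex-root constraint, whereas signature $(4,0)$ has four real roots each contributing a full chain.

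The accounting then goes: for signature $(0,2)$ the four complex roots (in two conjugate pairs) each contribute at most one or two solutions, and after pairing conjugate roots and discarding the trivial overlap one lands at $5$; for $(2,1)$ the two real roots contribute the bulk via their chains (a few each) plus the complex pair contributes a small number, totalling $9$; for $(4,0)$ each of the four real roots contributes a bounded chain, summing to $12$. In carrying this out one must also handle small $|y|$ (say $y = 1$, or $y$ so small that the gap principle has not ``started'') separately, by a direct argument counting lattice points or by invoking the resultant/Bézout-type bound, and one must be careful that the constant $D_0$ is chosen large enough that all the $M(F)$-powers appearing (from Lemma~\ref{mahl5}, Lemma~\ref{esug}, and the inequality \eqref{mahD5}) dominate the absolute constants, using $D_F > D_0$ and $M(F) \ge (D_F/4^4)^{1/6}$ to translate a large-discriminant hypothesis into a large-Mahler-measure hypothesis.

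The main obstacle I expect is making the gap principle genuinely \emph{uniform and explicit} with the right exponents: one needs the multiplicative gap between consecutive $y$-values to be a large enough power of $M(F)$ (or at least bounded below by a fixed constant $>1$ in a way that interacts correctly with the ceiling $M(F)^{3.5}$) so that the chain length is cut down to exactly the claimed small integer rather than merely $O(1)$. This is delicate precisely because the threshold $M(F)^{3.5}$ is itself a power of $M(F)$, so the ``large solutions'' regime $|y| \ge M(F)^{3.5}$ — deferred to the linear-forms-in-logarithms part of the paper — and the ``small solutions'' regime treated here must mesh exactly at that exponent; getting the bookkeeping of the exponents right, including the contribution of $|f'(\alpha_i)|$ from Lemma~\ref{esug} and the degree-$n$ constants, is where the real work lies.
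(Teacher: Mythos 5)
Your overall architecture (classify by the related root, use a gap principle per root, mesh at the threshold $M(F)^{3.5}$) matches the paper only in its second half, and the part you defer as a technicality is in fact where the paper does most of its work. There is a genuine gap.

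The per-root gap principle you describe only becomes effective once $y$ already exceeds roughly $M(F)$. Concretely, for two solutions related to the same $\alpha_i$ one gets $\frac{1}{y_1y_2}\le |x_1/y_1-x_2/y_2|\ll M(F)^2|D|^{-1/2}y_1^{-4}$, i.e. $y_2\gtrsim y_1^3/M(F)^2$ (this is exactly Lemma~\ref{SC1} of the paper, via Lemma~\ref{3S}); it yields at most one solution per root in the window $M(F)^{11/6+\theta}<y<M(F)^{3.5}$ and nothing at all for $1\le y\le M(F)^{11/6+\theta}$. Your alternative of lower-bounding $|\alpha_i-\alpha_j|$ by Lemma~\ref{mahl5} makes things worse, since that bound is a \emph{negative} power of $M(F)$, so the resulting multiplicative gap is far below $1$ for small $y$. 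Consequently the entire range $1\le y\le M(F)^{11/6+\theta}$ --- which you propose to dispatch ``by a direct argument counting lattice points or by invoking the resultant/B\'ezout-type bound'' --- is not handled: no such direct argument gives the constants $5$, $9$, $12$. The paper's device here, which your proposal is missing, is the Bombieri--Schmidt/Stewart argument: replace $F$ by the monic form of \emph{minimal Mahler measure} in its $GL_2(\mathbb{Z})$-class, prove the per-solution gap ratio is $\ge\frac{2}{7}\max(1,|\beta_i-m|)$ for each of the $r+s$ indices $i$, and then multiply over $i$, using $\prod_i\max(1,|\beta_i-m|)\ge M(F)$ to get $\bigl((2/7)^4M(F)\bigr)^{|\mathfrak{X}|}\le Y_0^{\,r+s}$ and hence $|\mathfrak{X}|<(r+s)\frac{65\log Y_0}{64\log M(F)}$. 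It is this product over \emph{all} roots, not a chain per root, that produces the table entries.

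A second, smaller error: for a complex root $\alpha_i=u+iv$ you argue that $|x-\alpha_i y|\le 1$ forces $|y|\le 1/|v|\ll M(F)^{3}$ and conclude that ``at most a couple of solutions'' are related to $\alpha_i$. Bounding the \emph{size} of $y$ does not bound the \emph{number} of solutions; you would still have to count solutions with $y$ up to a power of $M(F)$. The paper does use the restriction coming from complex roots (Lemma~\ref{Gr} lowers the ceiling to $M(F)^{9/4}$ in the signature $(0,2)$ case), but the counting underneath that ceiling is again done by the product argument, not by the size restriction alone.
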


Since $F(x , y)$ is monic, $(1 , 0)$ is a trivial solution to $F(x , y) = 1$. 
We will need to define a subset of solutions to (\ref{1.25}), called $\mathfrak{A}$ (see Section \ref{AA} for details). This  set contains the trivial solution $(1 , 0)$ and $5$ other pairs of solution   only when $F(x , 1) = 0$ has $4$ real roots.

\bigskip

\begin{tabular}{l @{ \qquad } l}
\hline
Signature of $F$ & $|\mathfrak{A}|$\\
\hline
$(0 , 2)$ & $1$\\
$(2 , 1)$ & $1$ \\
$(4 , 0)$ & $6$ \\
\hline \\
\end{tabular}

\begin{prop}\label{P3}
Let $F(x , y)$ be an irreducible  binary quartic form  with integer coefficients and Mahler measure $M(F)$. The Diophantine equation (\ref{1.25}) has at most $N_{2}$ (see the table below) solutions in integers $x$ and $y$ with $y\geq M(F)^{3.5}$, provided that the discriminant of $F$ is greater than $D_{0}$, where $D_{0}$ is an explicitly computable  constant.

\bigskip

\begin{tabular}{l @{ \qquad } l}
\hline
Signature of $F$ & $N_{2}$\\
\hline
$(0 , 2)$ & $0$\\
$(2 , 1)$ & $4$ \\
$(4 , 0)$ & $8$ \\
\hline \\
\end{tabular}
\end{prop}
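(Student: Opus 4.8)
The plan is to first reduce to counting, for each \emph{real} root of $F(x,1)=0$, the solutions $(x,y)$ with $y\geq M(F)^{3.5}$ that are \emph{related} to it in the sense of Section~\ref{sp}, and then to show that each real root can be related to at most two such solutions. Indeed, suppose $(x,y)$ is such a solution, related to $\alpha_{i}$, and write $\beta_{j}=x-\alpha_{j}y$. From $|\beta_{1}\beta_{2}\beta_{3}\beta_{4}|=1$ and $|\beta_{i}|\leq|\beta_{j}|$ for all $j$ we get $|\beta_{i}|\leq 1$; but if $\alpha_{i}$ lay in a complex conjugate pair $\{\alpha_{i},\overline{\alpha_{i}}\}$, then, as $x,y\in\mathbb{Z}$, $|x-\overline{\alpha_{i}}y|=|x-\alpha_{i}y|$, and Mahler's separation estimate (Lemma~\ref{mahl5}) together with $y\geq M(F)^{3.5}$ would give
$$
|\beta_{i}|=|x-\alpha_{i}y|\;\geq\;|\textrm{Im}\,\alpha_{i}|\cdot y\;\geq\;\tfrac{1}{2}\,|\alpha_{i}-\overline{\alpha_{i}}|\,M(F)^{3.5}\;\geq\;\tfrac{\sqrt{3}}{2}\,4^{-3}\,M(F)^{1/2},
$$
which exceeds $1$ once $M(F)$ is large, i.e. once $D>D_{0}$ (recall $M(F)\geq(D/4^{4})^{1/6}$ by~(\ref{mahD5})). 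So $\alpha_{i}\in\mathbb{R}$. In particular a signature $(0,2)$ form has no real root, hence no such solution, and $N_{2}=0$; for signatures $(2,1)$ and $(4,0)$ it remains to bound by two the number of such solutions related to each of the two, respectively four, real roots.

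Now fix a real root $\alpha=\alpha_{i}$ and a solution $(x,y)$ with $y\geq M(F)^{3.5}$ related to it. Then $\beta_{i}$ is a unit of $\mathbb{Z}[\alpha]$ with $|\beta_{i}|$ very small --- by $|\beta_{i}|\prod_{j\neq i}|x-\alpha_{j}y|=1$ and Lemma~\ref{esug} one has $|\beta_{i}|\ll M(F)^{O(1)}y^{-3}$ --- while $|\beta_{j}|\asymp|\alpha-\alpha_{j}|\,y$ for $j\neq i$. Feeding three roots into Siegel's identity
$$
(\alpha_{j}-\alpha_{k})\,\beta_{i}+(\alpha_{k}-\alpha_{i})\,\beta_{j}+(\alpha_{i}-\alpha_{j})\,\beta_{k}=0
$$
and solving for the ratio of the last two terms shows that the value $\phi(x,y)$ of the logarithmic curve of Section~\ref{TC} --- essentially $\log\bigl|(\alpha_{k}-\alpha_{i})\beta_{j}\big/\bigl((\alpha_{i}-\alpha_{j})\beta_{k}\bigr)\bigr|$ --- is tiny: $|\phi(x,y)|\ll M(F)^{O(1)}y^{-1}$, the discarded term carrying the factor $\beta_{i}$. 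Expressing $\beta_{i}=\zeta\prod_{l=1}^{r}\epsilon_{l}^{b_{l}}$ in a system of fundamental units of $\mathbb{Z}[\alpha]$, where $r\in\{1,2,3\}$ is the unit rank of $\mathbb{Q}(\alpha)$ --- and is thus $1$, $2$, $3$ according as the signature of $F$ is $(0,2)$, $(2,1)$, $(4,0)$ --- and passing to the relevant embeddings, $\phi(x,y)$ becomes a linear form $\Lambda$ in $r+1$ logarithms of fixed algebraic numbers (the root differences and the $\epsilon_{l}$), with integer coefficients $1,b_{1},\dots,b_{r}$. Since $\beta_{i}$ is a unit, the product formula gives $h(\beta_{i})\ll\log M(F)+\log y$, and Voutier's lower bound $h(\epsilon_{l})\gg1$ (Proposition~\ref{hM5}) then gives $\max_{l}|b_{l}|\ll\log M(F)+\log y$; a suitable choice of fundamental units also controls the heights $A_{l}$ appearing in Proposition~\ref{mat5}. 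Thus $\log|\Lambda|\leq-\log y+O(\log M(F))$, whereas Matveev's theorem (Proposition~\ref{mat5}), with $d=4$, $n=r+1\leq4$ and $B\ll\log M(F)+\log y$, gives a lower bound for $\log|\Lambda|$ of the shape $-c_{F}\log\bigl(\log M(F)+\log y\bigr)$ with $c_{F}$ explicit. Comparison yields an explicit $Y_{0}=Y_{0}(F)$ with $y\leq Y_{0}$ for all these solutions.

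Finally, to bound their number I would use a gap principle: if $(x_{1},y_{1})$ and $(x_{2},y_{2})$ are two such solutions related to $\alpha$ with $M(F)^{3.5}\leq y_{1}<y_{2}$, then $x_{1}y_{2}-x_{2}y_{1}=\beta_{i}(x_{1},y_{1})\,y_{2}-\beta_{i}(x_{2},y_{2})\,y_{1}$ is a nonzero integer, so
$$
1\;\leq\;\bigl|\beta_{i}(x_{1},y_{1})\bigr|\,y_{2}+\bigl|\beta_{i}(x_{2},y_{2})\bigr|\,y_{1},
$$
and the estimate $|\beta_{i}|\ll M(F)^{O(1)}y^{-3}$ (together with the lower bound for $\prod_{j\neq i}|\alpha-\alpha_{j}|=|F'(\alpha)|$ from Lemma~\ref{esug}) forces $y_{2}$ to exceed a fixed power of $y_{1}$; iterating this against $y\leq Y_{0}$ bounds the number of solutions related to $\alpha$. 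The main obstacle --- and the heart of the proof --- is to sharpen this to \emph{exactly two} solutions per real root rather than merely $O(1)$ of them: this requires weighing the strength of the gap against the precise size of $Y_{0}$, which depends on the number $r+1$ of logarithms in $\Lambda$ and on the heights entering Matveev's bound, and it is here that the threshold exponent $3.5$ (chosen so that the first application of the gap is already effective) and the separate treatment of the three signatures, corresponding to unit ranks $1$, $2$, $3$, are used. Granting it, at most two solutions with $y\geq M(F)^{3.5}$ are related to each real root of $F$, whence $N_{2}\leq0$, $N_{2}\leq 4$ and $N_{2}\leq 8$ for the signatures $(0,2)$, $(2,1)$ and $(4,0)$, as claimed.
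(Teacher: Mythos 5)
Your reduction to real roots is sound and matches the spirit of the paper: for a solution related to a non-real root $\alpha_i$ one has $|x-\alpha_i y|\geq |\mathrm{Im}\,\alpha_i|\,y$, and Lemma \ref{mahl5} plus $y\geq M(F)^{3.5}$ forces $|x-\alpha_i y|\gg M(F)^{1/2}>1$, a contradiction; this gives $N_2=0$ for signature $(0,2)$ and reduces the other two cases to bounding, by two, the solutions related to each real root. (The paper gets the same reduction via Lemma \ref{Gr}, which shows such solutions have $|y|\ll M(F)^{9/4}$.)

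The genuine gap is in the core step, which you explicitly leave open (``Granting it\dots''), and the route you sketch cannot close it. Your plan is: Matveev yields an absolute bound $y\leq Y_0(F)$, and then a polynomial gap principle $y_2\gg y_1^{3}/M(F)^{2}$ is iterated against $Y_0$. But the constant $c_F$ in your Matveev bound contains $\Omega=A_1\cdots A_n$, where the $A_l$ are heights of fundamental units of $\mathbb{Q}(\alpha)$; these are controlled only by the regulator, which can be as large as a power of $D^{1/2}$. Hence $\log Y_0$ is not $O(\log M(F))$, the number of gap iterations between $M(F)^{3.5}$ and $Y_0$ grows with $F$, and you get neither ``exactly two'' nor even a bound independent of $F$. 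The paper avoids an absolute bound on $y$ entirely. It assumes \emph{three} solutions related to one real root and plays two estimates against each other: an \emph{exponential} gap principle $r_3>c\exp(r_1/6)$ (Proposition \ref{exg5}), obtained by bounding the area of the triangle formed by $\phi(x_1,y_1),\phi(x_2,y_2),\phi(x_3,y_3)$ from above by proximity of the curve to the line $\mathbf{L_4}$ (Lemma \ref{gp15}) and from below by Voutier's theorem, respectively by the covolume of the unit lattice in the $(2,1)$ case; and a \emph{polynomial} bound $r_3<K_1 r_1^{4}$ from Matveev, which is possible only because the heights $A_k$ of the fundamental units are themselves bounded by $24\,\|\phi(x_1,y_1)\|$ via the reduced-basis/successive-minima setup and the excluded finite set $\mathfrak{A}$ (Corollary \ref{m} and Lemma \ref{EstimateOfHeightDelta}). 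Comparing the two bounds gives a contradiction once $D>D_0$, so at most two solutions survive per real root. This three-solution, triangle-area mechanism is the missing idea; without it (or an equivalent device) the count of $2$ per root, and hence the stated values of $N_2$, is not established.
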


%---------------------------------------------------------------------------------------
\section{Solutions with small $y$; the Proof of Proposition \ref{P1} }\label{p1}
%-----------------------------------------------------------------------------------------------------

We may suppose that $F(x , y)$ is a monic form and  has the smallest Mahler measure among all  monic forms that are equivalent to $F$. Assume that $F(x , 1) = 0$ has $r$ real roots and $2s$ non-real roots ($r + 2s = 4$).

Let $Y_{0}$ be a positive real number. Following Stewart \cite{Ste5} and Bombieri and Schmidt \cite{Bom5}, we will estimate the solutions $(x , y)$ to (\ref{1.25}) for which $0< y \leq Y_{0}$.    For binary form 
$$
F(x , y) = (x - \alpha_{1}y)\ldots  (x - \alpha_{n}y)
$$
put 
$$L_{i}(x , y) = x - \alpha_{i}y
$$
 for $i =1, \ldots, n$. Then
\begin{lemma}\label{S56}
Suppose $F$ is a monic binary form. Then for every solutions $(x , y)$ of (\ref{1.25}) we have
$$
\frac{1}{L_{i}(x , y)} - \frac{1}{L_{j}(x , y)} = (\beta_{j} - \beta_{i}) y,
$$
where $\beta_{1}$,\ldots, $\beta_{n}$ are such that the form
$$
J(u , w) = (u - \beta_{1}w)\ldots (u - \beta_{n}w)
$$
is equivalent to $F$.
\end{lemma}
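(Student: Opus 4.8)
The plan is to read off the form $J$ (and the numbers $\beta_i$) from the unimodular substitution naturally attached to the solution $(x,y)$, and then to check the asserted identity by a one-line computation. Since $(x,y)$ solves $(\ref{1.25})$ we have $\gcd(x,y)=1$, so I would pick $x_1,y_1\in\mathbb{Z}$ with $x_1y-xy_1=1$ and form the matrix $M=\begin{pmatrix}x&x_1\\ y&y_1\end{pmatrix}\in GL_2(\mathbb{Z})$. Set
$$
J(u,w)=F(xu+x_1w,\ yu+y_1w)=(F\circ M)(u,w),
$$
which is equivalent to $F$ by the definition of equivalence. Factoring over $\mathbb{C}$, since $F(u,w)=\prod_{i=1}^{n}(u-\alpha_iw)$,
$$
J(u,w)=\prod_{i=1}^{n}\bigl((x-\alpha_iy)u+(x_1-\alpha_iy_1)w\bigr)=\prod_{i=1}^{n}L_i(x,y)\bigl(u-\beta_iw\bigr),\qquad \beta_i:=\frac{\alpha_iy_1-x_1}{L_i(x,y)}.
$$
(The denominators are nonzero, since $L_i(x,y)=0$ would force $\alpha_i\in\mathbb{Q}$, contradicting irreducibility of $F$.) Because $\prod_iL_i(x,y)=F(x,y)=\pm1$, the monic form $\prod_{i=1}^n(u-\beta_iw)$ equals $\pm J$; it therefore has the same Mahler measure and discriminant as $F$, and is $GL_2(\mathbb{Z})$-equivalent to $F$ itself when $F(x,y)=1$. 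So this choice of $\beta_1,\dots,\beta_n$ is admissible for the statement, and it is the one I would use.

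It remains to compute. On the one hand,
$$
\frac{1}{L_i(x,y)}-\frac{1}{L_j(x,y)}=\frac{L_j(x,y)-L_i(x,y)}{L_i(x,y)L_j(x,y)}=\frac{(\alpha_i-\alpha_j)\,y}{L_i(x,y)L_j(x,y)}.
$$
On the other hand, combining the two fractions defining $\beta_j-\beta_i$ over the common denominator $L_i(x,y)L_j(x,y)$, the numerator
$$
(\alpha_jy_1-x_1)\,L_i(x,y)-(\alpha_iy_1-x_1)\,L_j(x,y)
$$
expands and collapses, once the terms in $\alpha_i\alpha_j$ and in $x_1x$ cancel, to $(\alpha_j-\alpha_i)(xy_1-x_1y)=\alpha_i-\alpha_j$, the last equality using $x_1y-xy_1=1$. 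Hence $\beta_j-\beta_i=(\alpha_i-\alpha_j)/\bigl(L_i(x,y)L_j(x,y)\bigr)$, and comparison with the previous display gives exactly
$$
\frac{1}{L_i(x,y)}-\frac{1}{L_j(x,y)}=(\beta_j-\beta_i)\,y .
$$

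There is no genuine analytic difficulty here: the whole content is the algebraic identity above, which is the classical Bombieri--Schmidt/Stewart device underlying the construction of $\phi(x,y)$ in Section \ref{TC}. The only points requiring care are purely bookkeeping — choosing the sign of $x_1y-xy_1$ so that the right-hand side appears as $(\beta_j-\beta_i)y$ rather than $(\beta_i-\beta_j)y$, and noting that when $F(x,y)=-1$ one has $\prod_i(u-\beta_iw)=-J$, which changes none of the quantities ($M(F)$, $D_F$, $N_F$, and the displayed identity) for which the lemma is subsequently invoked.
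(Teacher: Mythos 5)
Your proof is correct and coincides in substance with the paper's: the paper disposes of this lemma by citing Lemma 4 of \cite{Ste5} and Lemma 3 of \cite{Bom5} with base solution $(1,0)$, and your unimodular-substitution computation (building $J$ from a matrix with $x_1y-xy_1=1$ and verifying $\beta_j-\beta_i=(\alpha_i-\alpha_j)/\bigl(L_i(x,y)L_j(x,y)\bigr)$) is exactly the argument underlying those cited lemmas. The only delicate point, which you already flag correctly, is the harmless sign issue when $F(x,y)=-1$, where the monic form $\prod_i(u-\beta_iw)$ equals $-J$ rather than $J$.
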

\begin{proof}
 This is Lemma 4 of \cite{Ste5} and Lemma 3 of \cite{Bom5}, by taking $(x_{0}, y_{0}) $  equal to $ (1 , 0)$.
\end{proof}

For every  solution $(x , y) \neq (1 , 0)$ of (\ref{1.25}), fix $j = j(x , y)$ with
$$
\left|L_{j}(x , y) \right| \geq 1.
$$
Then, by Lemma \ref{S56},
\begin{equation}\label{S57}
\frac{1}{\left|L_{i}(x , y) \right|} \geq |\beta_{j} - \beta_{i}| |y| - 1.
\end{equation}
For complex conjugate  $\bar{\beta_{j}}$  of $\beta_{j}$, where $j = j(x , y)$,  we also have
$$  
\frac{1}{\left|L_{i}(x , y) \right|} \geq |\bar{\beta_{j}} - \beta_{i}| |y| - 1.
$$
Hence
$$
\frac{1}{\left|L_{i}(x , y) \right|} \geq |\textrm{Re}(\beta_{j}) - \beta_{i}| |y| - 1,
$$
where $\textrm{Re}(\beta_{j})$ is the real part of $\beta_{j}$.
We now choose an integer $m = m(x , y)$ with $|\textrm{Re}(\beta_{j}) - \beta_{j}|\leq 1/2$, and we obtain 
\begin{equation}\label{S58}
\frac{1}{\left|L_{i}(x , y) \right|} \geq \left(|m- \beta_{i}| -\frac{1}{2}\right) |y| - 1,
\end{equation}
for $i = 1,\ldots , n$.

\bigskip

\textbf{Definition}. For $1\leq i \leq n$, Let $\frak{X}_{i}$ be the set of solutions to (\ref{1.25}) with $1\leq y \leq Y_{0}$ and $\left|L_{i}(x , y) \right| \leq \frac{1}{2y}$.

\bigskip

\textbf{Remark } When $\alpha_{k}$ and $\alpha_{l}$ are complex conjugates, $\frak{X}_{l} = \frak{X}_{k}$ and therefore 
we only need to consider $r + s$ different sets $\frak{X}_{i}$.

\begin{lemma}\label{Sl5}
Suppose $(x_{1} , y_{1})$ and $(x_{2} , y_{2})$ are two distinct solutions in $\frak{X}_{i}$ with $y_{1} \leq y_{2}$. Then
$$
\frac{y_{2}}{y_{1}} \geq \frac{2}{7} \max(1 , |\beta_{i}(x_{1} , y_{1}) - m(x_{1} , y_{1})|).
$$
\end{lemma}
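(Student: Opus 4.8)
The plan is to run the classical gap principle of Bombieri--Schmidt and Stewart, feeding the identity of Lemma~\ref{S56} into the already-packaged inequality (\ref{S58}). Write $L_i^{(k)}:=L_i(x_k,y_k)=x_k-\alpha_i y_k$ for $k=1,2$ and set $t:=|\beta_i(x_1,y_1)-m(x_1,y_1)|$, so that the assertion to be proved is $y_2/y_1\ge\tfrac{2}{7}\max(1,t)$. I would begin from the elementary identity
$$
x_1y_2-x_2y_1 \;=\; y_2\,L_i^{(1)}-y_1\,L_i^{(2)},
$$
which holds for any (possibly complex) value of $\alpha_i$. Since any solution of (\ref{1.25}) has coprime coordinates and both solutions lie in $\frak{X}_i$ (so $y_1,y_2\ge 1$), the two fractions $x_1/y_1$ and $x_2/y_2$ are distinct, whence $x_1y_2-x_2y_1$ is a nonzero integer and $|x_1y_2-x_2y_1|\ge 1$. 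The identity then yields
$$
1 \;\le\; y_2\,|L_i^{(1)}| + y_1\,|L_i^{(2)}|.
$$

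Next I would use the defining inequality of $\frak{X}_i$ twice, asymmetrically. For $(x_2,y_2)$ it gives $|L_i^{(2)}|\le\tfrac{1}{2y_2}$, hence $y_1|L_i^{(2)}|\le y_1/(2y_2)\le\tfrac12$ because $y_1\le y_2$; plugging this in leaves $y_2|L_i^{(1)}|\ge\tfrac12$. For $(x_1,y_1)$ I would not use the crude bound $|L_i^{(1)}|\le\tfrac{1}{2y_1}$ but instead the sharper one coming from (\ref{S58}) applied to the solution $(x_1,y_1)$, namely
$$
\frac{1}{|L_i^{(1)}|} \;\ge\; \Bigl(t-\tfrac12\Bigr)y_1-1 .
$$

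The endgame is a short case split on the size of $t$. If $t\le\tfrac72$, then $\tfrac{2}{7}\max(1,t)\le 1\le y_2/y_1$ and nothing more is needed. If $t>\tfrac72$, then (since $y_1\ge 1$) the quantity $(t-\tfrac12)y_1-1\ge t-\tfrac32>0$ is positive, so the displayed bound may be inverted to $|L_i^{(1)}|\le\bigl((t-\tfrac12)y_1-1\bigr)^{-1}$; combining with $y_2|L_i^{(1)}|\ge\tfrac12$ gives $(t-\tfrac12)y_1-1\le 2y_2$, hence $2\,y_2/y_1\ge (t-\tfrac12)-1/y_1\ge t-\tfrac32$. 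A one-line check confirms $\tfrac{t-3/2}{2}\ge\tfrac{2}{7}t$ exactly when $t\ge\tfrac72$, giving $y_2/y_1\ge\tfrac{2}{7}t=\tfrac{2}{7}\max(1,t)$.

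I expect the only genuine care points to be the following. First, the roots $\beta_\bullet$ and the integer $m$ in (\ref{S58}) are attached to the specific solution $(x_1,y_1)$ and must not be confused with data from $(x_2,y_2)$; the asymmetric use of $\frak{X}_i$ (sharp bound for $(x_1,y_1)$, crude bound for $(x_2,y_2)$) is what makes the argument go. Second, inverting (\ref{S58}) requires the positivity $(t-\tfrac12)y_1-1>0$, which is only available for $t>\tfrac72$, so the small-$t$ regime must be handled separately, though there it is immediate. The precise shape of the constants ($\tfrac12$'s, the threshold $\tfrac72$) is dictated by the wish to land on exactly the factor $\tfrac{2}{7}$.
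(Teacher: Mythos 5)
Your proof is correct: the paper itself gives no argument for this lemma, merely citing Lemma 5 of Stewart and Lemma 4 of Bombieri--Schmidt, and your write-up is precisely the standard gap-principle proof from those sources — the determinant identity $x_1y_2-x_2y_1=y_2L_i^{(1)}-y_1L_i^{(2)}$, the asymmetric use of the $\frak{X}_i$ condition for $(x_2,y_2)$ versus inequality (\ref{S58}) for $(x_1,y_1)$, and the case split at $t=\tfrac72$. All steps and constants check out, including the positivity needed to invert (\ref{S58}) and the final comparison $\tfrac{t-3/2}{2}\ge\tfrac27 t$ for $t\ge\tfrac72$.
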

\begin{proof}
This is Lemma 5 of \cite{Ste5} and Lemma 4 of \cite{Bom5}.
\end{proof}

\begin{lemma}\label{Sl6}
Suppose $(x , y)$ is a solution to (\ref{1.25}) with $y > 0$ and $\left|L_{i}(x , y) \right|> \frac{1}{2y}$. Then 
$$
|m(x , y) - \beta_{i}(x , y)| \leq \frac{7}{2}.
$$
\end{lemma}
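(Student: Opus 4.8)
The plan is to read the conclusion straight out of inequality (\ref{S58}), which was established for every index $i$ and every solution $(x,y)\neq (1,0)$ of (\ref{1.25}) with $y>0$; since the hypothesis here includes $y>0$, the solution in question is automatically different from $(1,0)$, and the quantities $m=m(x,y)$ and $\beta_i=\beta_i(x,y)$ are exactly those attached to $(x,y)$ via Lemma \ref{S56} and the choice of $m$ made just before (\ref{S58}). Thus I am free to apply
$$
\frac{1}{\left|L_i(x,y)\right|}\;\geq\;\left(|m-\beta_i|-\tfrac12\right)|y|-1
$$
to the specific index $i$ singled out in the statement.

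First I would dispose of the trivial case: if $|m(x,y)-\beta_i(x,y)|\leq \tfrac12$, then in particular $|m-\beta_i|\leq \tfrac72$ and there is nothing to prove. So assume $|m-\beta_i|>\tfrac12$, which makes the right-hand side of (\ref{S58}) a genuine (nonnegative) lower bound. Now invoke the hypothesis $\left|L_i(x,y)\right|>\tfrac{1}{2y}$, equivalently $\tfrac{1}{\left|L_i(x,y)\right|}<2y$. Substituting into (\ref{S58}) gives
$$
2y\;>\;\left(|m-\beta_i|-\tfrac12\right)y-1,
$$
and dividing through by $y>0$ yields
$$
|m-\beta_i|-\tfrac12\;<\;2+\frac{1}{y}.
$$
Since $(x,y)$ is an integral solution with $y>0$, we have $y\geq 1$, hence $\tfrac1y\leq 1$, and therefore $|m-\beta_i|<\tfrac12+2+1=\tfrac72$, which is the asserted bound.

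The argument is entirely elementary and there is no real obstacle; the only point requiring care is bookkeeping, namely making sure that (\ref{S58}) — derived uniformly in $i$ — is being applied with the correct $m$ and $\beta_i$ associated to this very solution $(x,y)$, and that $|m-\beta_i|$ is interpreted as the modulus of a complex number when $\alpha_i$ (hence $\beta_i$) is non-real, which causes no change in the computation.
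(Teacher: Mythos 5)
Your proof is correct: combining the hypothesis $\frac{1}{|L_i(x,y)|}<2y$ with inequality (\ref{S58}) and using $y\geq 1$ gives $|m-\beta_i|<\frac12+2+\frac1y\leq\frac72$ exactly as you write, and your bookkeeping about which $m$ and $\beta_i$ are meant is the right point to check. The paper itself gives no argument here --- it simply cites Lemma 6 of Stewart \cite{Ste5} --- and your derivation is precisely the standard computation behind that citation, so there is nothing to reconcile; the only cosmetic remark is that your preliminary case split on $|m-\beta_i|\leq\frac12$ is unnecessary, since the algebra goes through regardless of the sign of $|m-\beta_i|-\frac12$.
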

\begin{proof}
This is Lemma 6 of \cite{Ste5}.
\end{proof}

By Lemma \ref{S56} the form 
$$
J(u , w) = (u - \beta_{1}w)\ldots (u - \beta_{n}w)
$$
is equivalent to $F(x , y)$ and therefore the form
$$
\hat{J}(u , w) = (u - (\beta_{1}-m) w)\ldots (u - (\beta_{n}- m)w)
$$
is also equivalent to $F(x , y)$. Therefore, since we assumed that $F$ has the smallest Mahler measure among its equivalent forms, we get
\begin{equation}\label{Spre60}
\prod_{i=1}^{n} \max(1, |\beta_{1}(x , y)-m(x , y)|) \geq M(F). 
\end{equation}
For each set $\frak{X}_{i}$ that is not empty,  let  $(x^{(i)} , y^{(i)})$ be the element with the largest value of $y$.
Let $\frak{X}$ be the set of solutions of (\ref{1.25}) with $1 \leq y \leq Y_{0}$ minus the elements $(x^{(1)} , y^{(1)})$, \ldots, $(x^{(r+s)} , y^{(r+s)})$.
% Recall that there are at most $r+s$ of , for if $\alpha_{k}$ and $\alpha_{l}$ are complex conjugates and roots of $F($\left|L_{i}(x , y) \right| \leq \frac{1}{2y}x , 1) = 0$ then  %$(x^{(k)} , y^{(k)}) = (x^{(l)} , y^{(l)})$.
Suppose that, for integer $i$, the set   $\frak{X}_{i}$ is non-empty. Index the elements of $\frak{X}_{i}$ as 
$$(x_{1}^{(i)}, y_{1}^{(i)}), \ldots, (x_{v}^{(i)}, y_{v}^{(i)}),$$
 so that $y_{1}^{(i)} \leq \ldots \leq y_{v}^{(i)}$ (note that $(x_{v}^{(i)}, y_{v}^{(i)}) = (x^{(i)} , y^{(i)})$). By Lemma \ref{Sl5}
\begin{equation*}
\frac{2}{7} \max\left(1, \left|\beta_{i}(x_{k}^{(i)}, y_{k}^{(i)})\right|\right) \leq \frac{y_{k+1}^{(i)}}{y_{k}^{(i)}}
\end{equation*}
for $k = 1 \ldots, v-1$. Hence
$$
\prod_{(x , y) \in  \frak{X} \bigcap \frak{X_{i}} }\frac{2}{7} \max\left(1, \left|\beta_{i}(x_{k}^{(i)}, y_{k}^{(i)})\right|\right) \leq Y_{0}.
$$ 
%For  $i$ ($1 \leq i \leq n$),  if $(x , y) \in \frak{X_{i}} \bigcap \frak{X}$ then  $(x , y)$ is indexed in the set $\frak{X_{i}}$, in other words $(x , y) =  (x_{w}^{(i)}, y_{w}^{(i)})$ for some $ 1\leq w < v$ (see (\ref{ind})).
For $(x , y)$ in $\frak{X}$ but not in  $\frak{X_{i}}$ we have 
$$
\frac{2}{7} \max\left(1, \left|\beta_{i}(x_{k}^{(i)}, y_{k}^{(i)})\right|\right) \leq 1.
$$
By Lemma \ref{Sl6}. Thus
\begin{equation*}\label{S59}
\prod_{(x , y) \in  \frak{X}}\frac{2}{7} \max\left(1, \left|\beta_{i}(x_{k}^{(i)}, y_{k}^{(i)})\right|\right) \leq Y_{0}.
\end{equation*}
Comparing this with (\ref{Spre60}) and since we have at most $r+s$ different sets $\frak{X_{i}}$, we obtain 
\begin{equation}\label{S60}
\left( \left(\frac{2}{7}\right)^{4} M(F)\right)^{ |\frak{X}|} \leq Y_{0}^{r+s}.
 \end{equation}
 
 If $D_{F}$  satisfies the following numerical inequality
 $$
 D_{F} \geq 4^4 \, \left(\frac{7}{2}\right)^{4\times 6 \times 65}
 $$
 then we have
 $$
 \left(\frac{D_{F}}{4^4}\right)^{\frac{1}{6 \times 65}} \geq \left(\frac{7}{2}\right)^{4}.
 $$
 From here  and the fact that $M(F) \geq \left(\frac{D_{F}}{4^4}\right)^{\frac{1}{6}}$ (see   (\ref{mahD5})), we conclude that
 if the discriminant  is large enough then  $M(F)$ will be large enough to satisfy
 $$
\left(\frac{2}{7}\right)^{4} M(F) \geq M(F)^{64/65}.
$$
By (\ref{S60}),
\begin{equation}\label{sm5}
|\mathfrak{X}| < (r+s) \frac{65 \log Y_{0}}{ 64 \log M(F)}.
\end{equation}

When $F(x , y)$ has signature $(4 , 0)$,
choose $\theta_{1} > 0$ such that 
$$
\frac{65}{16} \left(\frac{11}{6} + \theta_{1}\right) < 8 .
$$
From (\ref{S60}), we conclude that in this case $|\mathfrak{X}|$ is at most $7$ and therefore (\ref{1.25}) has at most $11$ solutions with $1 \leq y < M(f) ^{\frac{11}{6} + \theta_{1}}$. 

\bigskip

When $F(x , y)$ has signature $(2 , 1)$
choose $\theta_{2} > 0$ such that 
$$
3\times \frac{65}{64} \left(\frac{11}{6} + \theta_{2}\right) < 6 .
$$
From (\ref{S60}), we conclude that in this case $|\mathfrak{X}|$ is at most $5$ and therefore (\ref{1.25}) has at most $8$ solutions with $1 \leq y < M(f) ^{\frac{11}{6} + \theta_{2}}$. 

\bigskip
We can repeat  the similar argument for forms with  signature $(0 , 2)$
and choose $\theta_{3} > 0$ such that 
$$
2\times \frac{65}{64} \left(\frac{11}{6} + \theta_{3}\right) < 4 .
$$
This will give us at most $5$ solutions with $1 \leq y < M(f) ^{\frac{11}{6} + \theta_{3}}$. But for this case, we have more to say in the next section.

\begin{lemma}\label{3S}
Let $F$ be a binary form of degree $n \geq 3$ with integer coefficients and  nonzero discriminant $D$. For every pair of integers $(x , y)$ with $ y \neq 0$
$$
\min_{\alpha} \left| \alpha - \frac{x}{y} \right| \leq \frac{2^{n-1} n^{n-1/2} \left(M(F)\right)^{n-2} |F(x , y)|}{|D(F)|^{1/2} |y|^n},
$$
where the minimum is taken over the zeros $\alpha$ of $F(z , 1)$.
\end{lemma}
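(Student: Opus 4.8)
The plan is to pin $x/y$ close to a single root of $F(z,1)$, convert that into a lower bound for the derivative of $F(z,1)$ at that root, and finally express this lower bound through the discriminant by an elementary factorisation.

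Write $F(z,1) = a_{0}\prod_{i=1}^{n}(z-\alpha_{i})$ and set $\beta = x/y$, so that $|F(x,y)| = |a_{0}|\,|y|^{n}\prod_{i=1}^{n}|\beta-\alpha_{i}|$. Relabel the roots so that $\alpha_{1}$ is one that is closest to $\beta$; I may assume $\beta\neq\alpha_{1}$, else the left-hand side vanishes. For $i\neq 1$ the triangle inequality, together with $|\beta-\alpha_{1}|\leq|\beta-\alpha_{i}|$, gives $|\alpha_{1}-\alpha_{i}|\leq|\beta-\alpha_{1}|+|\beta-\alpha_{i}|\leq 2|\beta-\alpha_{i}|$, so $\prod_{i\neq 1}|\beta-\alpha_{i}|\geq 2^{-(n-1)}\prod_{i\neq 1}|\alpha_{1}-\alpha_{i}| = 2^{-(n-1)}|F'(\alpha_{1})|/|a_{0}|$, where $F'$ is the derivative of $F(z,1)$. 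Substituting into the identity for $|F(x,y)|$ and solving for $|\beta-\alpha_{1}|$ yields $|\beta-\alpha_{1}|\leq 2^{\,n-1}|F(x,y)|\big/\big(|y|^{n}|F'(\alpha_{1})|\big)$, which already has the desired shape once $|F'(\alpha_{1})|$ is bounded below.

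For the lower bound on $|F'(\alpha_{1})|$ I would use the discriminant directly. Since $D = a_{0}^{2n-2}\prod_{i<j}(\alpha_{i}-\alpha_{j})^{2}$, we have $|D|^{1/2}=|a_{0}|^{n-1}\prod_{i<j}|\alpha_{i}-\alpha_{j}|$, and separating off the $n-1$ factors that involve $\alpha_{1}$ gives $|D|^{1/2}=|a_{0}|^{n-2}\,|F'(\alpha_{1})|\prod_{2\leq i<j\leq n}|\alpha_{i}-\alpha_{j}|$. It then remains to bound the leftover product from above: using $|\alpha_{i}-\alpha_{j}|\leq 2\max(1,|\alpha_{i}|)\max(1,|\alpha_{j}|)$ (exactly as in the proof of Lemma~\ref{esug}) one obtains $\prod_{2\leq i<j\leq n}|\alpha_{i}-\alpha_{j}|\leq 2^{\binom{n-1}{2}}\prod_{i=2}^{n}\max(1,|\alpha_{i}|)^{\,n-2}\leq 2^{\binom{n-1}{2}}\big(M(F)/|a_{0}|\big)^{n-2}$; for a constant valid uniformly in $n$ one may instead apply Hadamard's inequality to the $(n-1)\times(n-1)$ Vandermonde matrix in $\alpha_{2},\dots,\alpha_{n}$, getting $\prod_{2\leq i<j\leq n}|\alpha_{i}-\alpha_{j}|\leq(n-1)^{(n-1)/2}\big(M(F)/|a_{0}|\big)^{n-2}\leq n^{\,n-1/2}\big(M(F)/|a_{0}|\big)^{n-2}$. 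Combining these gives $|F'(\alpha_{1})|\geq|D|^{1/2}\big/\big(n^{\,n-1/2}M(F)^{n-2}\big)$, and inserting this into the bound of the previous paragraph produces exactly the asserted inequality.

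The main obstacle is keeping the exponents balanced: the appearance of $M(F)^{n-2}$ against the half-power $|D|^{1/2}$ forces one to peel off precisely the $n-1$ differences $|\alpha_{1}-\alpha_{i}|$ from $D$ and to estimate the remaining product over the indices $\geq 2$ by hand. One cannot simply invoke Mahler's discriminant bound (\ref{mahD5}) for the truncated polynomial $F(z,1)/(z-\alpha_{1})$, since that polynomial need not have integer (or even algebraic-integer) coefficients; this is what necessitates the direct $2\max\cdot\max$ or Vandermonde/Hadamard estimate. The crude version is already sufficient in the only range relevant to this paper, $n\in\{3,4\}$.
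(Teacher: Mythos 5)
Your argument is correct, and it is essentially the standard proof of this estimate (the statement is Stewart's Lemma 3, which the paper cites without reproducing a proof): isolate the root nearest to $x/y$, use $|\alpha_{1}-\alpha_{i}|\leq 2|\beta-\alpha_{i}|$ to reduce to a lower bound for $|F'(\alpha_{1})| = |a_{0}|\prod_{i\neq 1}|\alpha_{1}-\alpha_{i}|$, and obtain that lower bound by peeling the factors through $\alpha_{1}$ off $|D|^{1/2}$ and estimating the residual Vandermonde product. Your bookkeeping checks out (the $|a_{0}|^{n-2}$ cancels against $(M(F)/|a_{0}|)^{n-2}$), and your closing caveat is also right: the crude $2\max(1,|\alpha_i|)\max(1,|\alpha_j|)$ bound gives a constant $2^{\binom{n-1}{2}}$ that exceeds $n^{\,n-1/2}$ for large $n$, so the Hadamard variant is what yields the stated constant uniformly, while either version suffices for the degrees $n\in\{3,4\}$ used here.
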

\begin{proof}
This is Lemma 3 of \cite{Ste5}.
\end{proof}
\begin{lemma}\label{SC1}
Let $F(x , y)$ be a quartic binary form with   discriminant $D$, where $|D| \geq D_{0}$. Suppose that $\alpha_{i}$ is a root of $F(z , 1) = 0$. Suppose that $\theta > 0$.
Then related to $\alpha_{i}$,  there is at most $1$ solution for equation (\ref{1.25}) in integers $x$ and $y$ with $ \left(\frac{11}{6} + \theta \right) < y < M(F)^{3.5}$.
\end{lemma}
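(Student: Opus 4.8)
The plan is a gap principle. Reading the hypothesis in the form $M(F)^{11/6+\theta}<y<M(F)^{3.5}$ (the range not already covered by the counting argument on the preceding pages), I will show that two distinct solutions of (\ref{1.25}) related to $\alpha_{i}$ cannot both lie in this range once $D_{0}$ is large enough, so that at most one such solution exists.

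Suppose for contradiction that $(x_{1},y_{1})$ and $(x_{2},y_{2})$ are distinct solutions of (\ref{1.25}), both related to $\alpha_{i}$, with $M(F)^{11/6+\theta}<y_{k}<M(F)^{3.5}$ for $k=1,2$; after interchanging them and choosing signs (recall that solutions are counted up to sign, and the range excludes $y=0$) we may assume $0<y_{1}\le y_{2}$. Then the two pairs are primitive with positive second coordinate and distinct, hence not proportional, so $x_{1}y_{2}-x_{2}y_{1}$ is a nonzero integer and
\[
\left|\frac{x_{1}}{y_{1}}-\frac{x_{2}}{y_{2}}\right|=\frac{|x_{1}y_{2}-x_{2}y_{1}|}{y_{1}y_{2}}\ge\frac{1}{y_{1}y_{2}}.
\]
On the other hand, the fact that $(x_{k},y_{k})$ is related to $\alpha_{i}$ means precisely that $\alpha_{i}$ realizes $\min_{\alpha}|x_{k}/y_{k}-\alpha|$, so Lemma \ref{3S} with $n=4$ and $|F(x_{k},y_{k})|=1$ gives
\[
\left|\alpha_{i}-\frac{x_{k}}{y_{k}}\right|\le\frac{2^{3}\cdot 4^{7/2}\,M(F)^{2}}{|D|^{1/2}\,y_{k}^{4}}=\frac{1024\,M(F)^{2}}{|D|^{1/2}\,y_{k}^{4}}\qquad(k=1,2).
\]

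Combining these via the triangle inequality and $y_{1}\le y_{2}$,
\[
\frac{1}{y_{1}y_{2}}\le\frac{1024\,M(F)^{2}}{|D|^{1/2}}\left(\frac{1}{y_{1}^{4}}+\frac{1}{y_{2}^{4}}\right)\le\frac{2048\,M(F)^{2}}{|D|^{1/2}\,y_{1}^{4}},
\]
that is, $y_{1}^{3}/y_{2}\le 2048\,M(F)^{2}|D|^{-1/2}$. Inserting $y_{1}>M(F)^{11/6+\theta}$ and $y_{2}<M(F)^{7/2}$ the left-hand side exceeds $M(F)^{3(11/6+\theta)-7/2}=M(F)^{2+3\theta}$, whence $M(F)^{3\theta}<2048\,|D|^{-1/2}$; since $M(F)\ge1$ and $\theta>0$, this forces $|D|<2048^{2}=2^{22}$, contradicting $|D|\ge D_{0}$ as soon as $D_{0}\ge 2^{22}$. (One could lower this threshold by invoking Mahler's bound (\ref{mahD5}), but it is not needed here.)

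The computation is routine; the one point deserving care is the passage from Lemma \ref{3S}, whose conclusion is stated for $\min_{\alpha}|\alpha-x/y|$, to the estimate for the particular root $\alpha_{i}$ — this is legitimate exactly because relatedness to $\alpha_{i}$ means $\alpha_{i}$ attains that minimum. A secondary point is the nonvanishing of $x_{1}y_{2}-x_{2}y_{1}$, which rests on the primitivity of solutions together with the sign-identification convention, so that two genuinely distinct solutions with positive second coordinate are $\mathbb{Q}$-linearly independent.
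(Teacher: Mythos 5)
Your proof is correct and follows essentially the same route as the paper's: Lemma \ref{3S} applied to both solutions related to $\alpha_{i}$, combined with the determinant bound $|x_{1}y_{2}-x_{2}y_{1}|\geq 1$, yielding the gap inequality $y_{1}^{3}/y_{2}\lesssim M(F)^{2}$. The only (cosmetic) difference is that the paper shows the second solution is pushed beyond $M(F)^{3.5}$ via the exponents $\delta_{j}$, whereas you plug both range bounds in directly to get a numerical contradiction; your version is in fact more careful with the constant $2^{4}\cdot 4^{7/2}$, which the paper silently absorbs into the assumption that $|D|$ is large.
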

\begin{proof}
Assume that $(x_{1} , y_{1})$ and $(x_{2}, y_{2})$ are two distinct solutions  to (\ref{1.25}), both related to $\alpha_{i}$ with $ y_{2} > y_{1} > M(F)^2$. By Lemma \ref{3S},  we have
\begin{eqnarray*}
\left| \frac{x_{2}}{y_{2}} - \frac{x_{1}}{y_{1}} \right| &\leq &  \left| \alpha_{i} - \frac{x_{1}}{y_{1}} \right| +  \left| \alpha_{i} - \frac{x_{2}}{y_{2}} \right|\\ \nonumber
 &\leq& \frac{2^{3} 4^{7/2} \left(M(F)\right)^{2}}{|D(F)|^{1/2} |y_{1}|^4} +  \frac{2^{3} 4^{7/2} \left(M(F)\right)^{2}}{|D(F)|^{1/2} |y_{2}|^4}\\ \nonumber
 &\leq& \frac{2^{4} 4^{7/2} \left(M(F)\right)^{2}}{|D(F)|^{1/2} |y_{1}|^4}.
\end{eqnarray*}
Since $(x_{1} , y_{1})$, $(x_{2}, y_{2})$  are  distinct, we have  $|x_{2}y_{1} - x_{1}y_{2}|\geq 1$. Therefore,
$$
\left|\frac{1}{y_{1}y_{2}} \right|\leq \left| \frac{x_{2}}{y_{2}} - \frac{x_{1}}{y_{1}} \right| \leq \frac{M(F)^{2} }{ |y_{1}|^4}.
$$
This is because we assumed that $D_{F}$ is large. %and $|F(x_{j} , y_{j})|=1$. 
Thus,
\begin{equation}\label{S65}
\frac{y_{1}^{3}}{M(F)^{2}} \leq y_{2}.
\end{equation}
Following Stewart \cite{Ste5}, we define $\delta_{j}$, for $j = 1, 2$, by
$$
y_{j} = M(F)^{1+\delta_{j}}.
$$
By (\ref{mahD5}) the Mahler measure of $F$ is large  and  (\ref{S65}) implies that
$$
3 \delta_{1} \leq \delta_{2}.
$$ 
 From here, we conclude that 
$$
y_{2} > M(F)^{3.5}.
$$
In other words, related to each  root $\alpha_{i}$, there exists at most $1$ solution in $x$ and $y$ with $M(F)^ {\frac{11}{6} + \theta_{1}} < y < M(F)^{3.5}$.
\end{proof}

%---------------------------------------------------------------------------------------------------
\section{Proof of the Main Theorem for Forms with signature $(0 , 2)$}
%---------------------------------------------------------------------------------------------------

We will first show that  if a pair of integer $(x , y)$ satisfies
$F(x , y) = \pm 1$ and is related to a non-real root $\alpha$ of $F(x , 1) = 0$
 then
 $$
 |y| < M(F)^{9/4}.
 $$
\begin{lemma}\label{Gr}
For quartic binary form $F(x , y)$, let $\alpha$ be a non-real root of $F(x , 1) = 0$. If a pair of integer $(x , y)$ satisfies
$F(x , y) = \pm 1$ and is related to $\alpha$ then
\begin{equation}\label{AG}
|y| \leq  \frac{  2^{\frac{19}{4}}}{\left(\sqrt{3} \left|D_{F}\right|\right)^{1/4}} M(F)^{9/4}.
\end{equation}
\end{lemma}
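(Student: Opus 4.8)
The plan is to play the Liouville-type inequality of Lemma~\ref{3S} against the elementary fact that a real number $x/y$ cannot approximate a non-real algebraic number closer than half the distance from that number to its complex conjugate.

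Recall that $F$ is monic, so $F(x,y)=\prod_{i=1}^{4}(x-\alpha_i y)$; write $\alpha=\alpha_1$ and $\alpha_2=\bar\alpha$, the roots in signature $(0,2)$ falling into two complex-conjugate pairs. We may assume $y\neq 0$, the claimed inequality being trivial otherwise. Since $x,y\in\mathbb{Z}$ we have $|x-\alpha_j y|=|x-\bar\alpha_j y|$ for each $j$, so the hypothesis that $(x,y)$ is related to $\alpha$ says precisely that $\min_{1\le j\le 4}|x-\alpha_j y|=|x-\alpha y|$; dividing by $|y|$, the root $\alpha$ attains $\min_{\beta}\left|\beta-\frac{x}{y}\right|$, the minimum running over the zeros $\beta$ of $F(z,1)$. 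Hence Lemma~\ref{3S} with $n=4$ and $|F(x,y)|=1$ gives
\[
\left|\alpha-\frac{x}{y}\right|\le\frac{2^{3}\,4^{7/2}\,M(F)^{2}}{|D_F|^{1/2}\,|y|^{4}}=\frac{2^{10}\,M(F)^{2}}{|D_F|^{1/2}\,|y|^{4}}.
\]
On the other hand, the imaginary part of $\alpha-\frac{x}{y}$ is $\operatorname{Im}\alpha$, so $\left|\alpha-\frac{x}{y}\right|\ge|\operatorname{Im}\alpha|=\tfrac12|\alpha-\bar\alpha|$, and combining the two displays yields
\[
|y|^{4}\le\frac{2^{11}\,M(F)^{2}}{|\alpha-\bar\alpha|\;|D_F|^{1/2}}.
\]

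I would then bound $|\alpha-\bar\alpha|$ below by Mahler's separation estimate (Lemma~\ref{mahl5}), which for a quartic reads $|\alpha-\bar\alpha|\ge\sqrt{3}\cdot 4^{-3}M(F)^{-3}$, so that $|y|^{4}\le 2^{17}M(F)^{5}/(\sqrt{3}\,|D_F|^{1/2})$. Finally, Mahler's discriminant inequality~(\ref{mahD5}), $M(F)\ge(|D_F|/4^{4})^{1/6}$, gives $|D_F|^{1/2}\le 2^{4}M(F)^{3}$, hence $|D_F|^{-1/2}\le 2^{4}M(F)^{3}|D_F|^{-1}$ and
\[
|y|^{4}\le\frac{2^{21}\,M(F)^{8}}{\sqrt{3}\,|D_F|}.
\]
Since $M(F)\ge 4$ once the discriminant exceeds the explicit constant $D_0$, we have $2^{21}M(F)^{8}\le 2^{19}M(F)^{9}$, and taking fourth roots gives the asserted inequality~(\ref{AG}).

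The conceptual steps are painless for signature $(0,2)$ precisely because there are no real roots competing with $\alpha$ for the nearest-root minimum, so the passage from \emph{related to $\alpha$} to the hypothesis of Lemma~\ref{3S} is immediate. The only place that needs care is the numerical bookkeeping at the end: deciding how to redistribute the surplus powers of $M(F)$ against $|D_F|$ via~(\ref{mahD5}) so that the final bound takes exactly the stated shape $|y|\le 2^{19/4}(\sqrt{3}\,|D_F|)^{-1/4}M(F)^{9/4}$, and recording how large $D_0$ must be (here $M(F)\ge 4$, i.e. roughly $|D_F|\ge 2^{20}$, suffices for the last comparison, which is comfortably absorbed into the standing hypothesis $|D_F|\ge D_0$).
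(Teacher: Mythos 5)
Your proof is correct, and it reaches the stated bound by a slightly different packaging of the same Liouville-type idea. The paper argues directly from the factorization $\frac{1}{|y|^4}=\left|\frac{x}{y}-\alpha\right|\prod_{\alpha_i\neq\alpha}\left|\frac{x}{y}-\alpha_i\right|$, bounding the first factor below by $\frac{1}{2}|\alpha-\bar\alpha|$ (exactly your imaginary-part observation) and each remaining factor below by $\frac{1}{2}|\alpha-\alpha_i|$ via the relatedness hypothesis and the triangle inequality; this yields $\frac{1}{|y|^4}\geq 2^{-4}|\alpha-\bar\alpha|\,|f'(\alpha)|$, after which Lemma \ref{esug} supplies the lower bound $|f'(\alpha)|\geq 2^{-9}|D_F|M(F)^{-6}$ and Lemma \ref{mahl5} handles $|\alpha-\bar\alpha|$, giving the constant $2^{19/4}$ on the nose with no side condition. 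You instead outsource the upper bound on $\left|\alpha-\frac{x}{y}\right|$ to Stewart's Lemma \ref{3S}, which is internally the same computation but normalized differently; as a result you land on $|y|^4\leq 2^{21}M(F)^{8}/(\sqrt{3}\,|D_F|)$ and must spend the inequality $M(F)\geq 4$ (so roughly $|D_F|\geq 2^{20}$, via (\ref{mahD5})) to trade a factor $4$ for a factor $M(F)$ and recover the exact shape of (\ref{AG}). That makes your version of the lemma conditional on the discriminant being large, whereas the paper's proof is unconditional; the condition is harmless here because the standing hypothesis $|D_F|>D_0$ absorbs it, but it is worth noting that the lemma as stated carries no such hypothesis. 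A small compensating advantage of your route is that Lemma \ref{3S} does not require $F$ to be monic, so your argument applies verbatim to non-monic forms, while the paper's identity $\frac{1}{|y|^4}=\prod_i\left|\frac{x}{y}-\alpha_i\right|$ silently uses $a_0=1$.
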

\begin{proof}
 Let $\alpha = \mathfrak{r} +i \mathfrak{t}$, with $\mathfrak{t} \neq 0$, be a non-real root of $F(x, 1) = 0$. If a solution $(x , y)$ of (\ref{1.25}) is related to $\alpha$ then  $\bar{\alpha}$, the complex conjugate of $\alpha$, is also a root of  $F(x, 1) = 0$ and we have
 $$
 \left| \frac{x}{y} - \alpha \right| = \frac{\left| \frac{x}{y} - \alpha \right| + \left| \frac{x}{y} - \bar{\alpha} \right| }{2} \geq\frac{ \left| \alpha - \bar{\alpha} \right|}{2}.$$
 Moreover, if $\beta \neq \alpha$ is a root of $F(x , 1) = 0$ then
 $$
 \left| \frac{x}{y} - \beta \right| \geq\frac{ \left| \frac{x}{y} - \alpha \right| +  \left| \frac{x}{y} - \beta \right|}{2} \geq  \frac{\left| \beta - \alpha \right|}{2}.
 $$
 Thus 
     \begin{eqnarray*}
 \frac{1}{|y|^4} & = & \left| \frac{x}{y} - \alpha \right|  \prod_{\alpha_{i} \neq \alpha} \left| \frac{x}{y} - \alpha_{i}\right|  \\ \nonumber
 &\geq& \frac{\left|\alpha - \bar{\alpha}\right|}{2}  \prod_{\alpha_{i} \neq \alpha}\frac{ \left| \alpha - \alpha_{i}\right|}{2}  \\ \nonumber
& = &  \left|\alpha - \bar{\alpha}\right| \left|f' (\alpha)\right| 2^{-4}.
  \end{eqnarray*}
  By Lemma \ref{mahl5},
$$
  \left|\alpha - \bar{\alpha}\right| \geq \sqrt{3} (4)^{-3} M(F)^{-3}.
$$
This, together with Lemma \ref{esug}, shows that 
$$
 \frac{1}{|y|^4} \geq \sqrt{3} 2^{-19} \frac{\left|D_{F}\right|}{M(F)^{9}} .
$$
This completes our proof.
\end{proof}

Suppose that $(x_{1} , y_{1})$ and $(x_{2} , y_{2})$ are two solutions related to a fixed  root $\alpha$, with
$M(f) ^{\frac{17}{12} + \theta_{3}} < y_{1} < y_{2} < M(F)^{9/4}$.
Similar to the proof of Lemma \ref{SC1},
we define $\delta_{j}$, for $j = 1, 2$, by
$$
y_{j} = M(F)^{1+\delta_{j}}.
$$
Inequality  (\ref{S65}) implies that
$$
3 \delta_{1} \leq \delta_{2}.
$$ 
 From here, we conclude that if $y_{1} \geq M(F) ^{\frac{17}{12} + \theta_{3}}$ then 
 $$
y_{2} \geq M(F)^{9/4}.
$$
In other words, related to each  root $\alpha_{i}$, there exists at most $1$ solution in $x$ and $y$ with $M(F) ^{\frac{17}{12} + \theta_{3}} < y < M(F)^{9/4}$.

When $F(x , y)$ has signature $(0 , 2)$
Choose $\theta_{3} > 0$ such that 
$$
2\times \frac{65}{64} \left(\frac{17}{12} + \theta_{3}\right) < 3 .
$$
From (\ref{S60}), we conclude that in this case $|\mathfrak{X}|$ is at most $2$ and therefore (\ref{1.25}) has at most $4$ solutions with $1 \leq y < M(f) ^{\frac{17}{12} + \theta_{3}}$. 

Since $F$ is monic, $(1 , 0)$ is a trivial solution to  equation (\ref{1.25}). Therefore, when $F$ has signature $(0 , 2)$ the number of solutions to (\ref{1.25}) does not exceed $6$.

%------------------------------------------------------------------------------------------------------------------------------------------
\section{Transcendental Curve $\phi(x , y)$ }\label{TC}
%------------------------------------------------------------------------------------------------------------------------------------------

Fix a positive integer $k$. Define
\begin{equation}\label{fi5}
 \phi_{m}(x , y) =  \log \left| \frac{D^{\frac{1}{4k}}(x - y\alpha_{m})}{\left| f'(\alpha_{m})\right|^{\frac{1}{k} }}\right|
\end{equation}
and
\begin{equation}\label{dfi}
\phi(x , y) = \left( \phi_{1}(x , y) , \phi_{2}(x , y), \phi_{3}(x , y) , \phi_{4}(x , y) \right).
\end{equation}
Let $\| \phi(x , y) \|$ be the $L_{2}$ norm of the vector $\phi(x , y)$.

\bigskip

\textbf{Remark}.
In \cite{AO5},  a logarithmic curve $\phi(x , y)$  is defined by taking $k = 3$. The new general definition of $\phi(x , y)$ in this paper gives us the freedom of choosing $k$ large enough to  make our approximations  sharper. In order to have our estimations correct it is sufficient to take $k = 90$.

\begin{lemma}\label{lem15}
Suppose that $(x ,y)$ is a solution to the equation $F(x , y) = 1$, where $F$ is  the binary form in Theorem $\ref{main5}$. If
$$
\left| x - \alpha_{i} y\right| = \min_{1\leq j\leq 4} \left| x - \alpha_{j} y\right|
 $$
then 
$$
 \left\| \phi(x , y) \right\| \leq 6\, \log \frac{1}{ \left| x - \alpha_{i} y\right|} +  \left\| \phi(1 , 0) \right\| . $$
\end{lemma}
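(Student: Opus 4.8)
The plan is to bound the $L_2$ norm $\|\phi(x,y)\|$ coordinatewise, splitting the four coordinates $\phi_m(x,y)$ into the one distinguished index $i$ (where $|x-\alpha_i y|$ is smallest, hence $\le 1$ and possibly very small) and the remaining three indices. For each $m$ I would first rewrite $\phi_m(x,y) - \phi_m(1,0)$ using the definition \eqref{fi5}: since the factors $D^{1/(4k)}$ and $|f'(\alpha_m)|^{1/k}$ are the same for $(x,y)$ and for $(1,0)$ (note $x-\alpha_m y$ at $(1,0)$ is just $1$), we get $\phi_m(x,y) = \phi_m(1,0) + \log|x-\alpha_m y|$. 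So the whole content reduces to controlling $\sum_m (\log|x-\alpha_m y|)^2$ in terms of $\log\frac{1}{|x-\alpha_i y|}$ and the constants sitting in $\phi(1,0)$.

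Next I would bound $|x - \alpha_m y|$ for $m \neq i$ from above. The standard estimate is $|x-\alpha_m y| \le |x - \alpha_i y| + |\alpha_i - \alpha_m|\,|y| \le |y|\cdot(\text{something polynomial in the }\alpha\text{'s})$, or more crudely $|x-\alpha_m y| \le 2\max(1,|x|)\max(1,|\alpha_m|)\max(1,|y|)$ type bounds; combined with $\prod_m |x-\alpha_m y| = |F(x,y)| = 1$, the product of the three "large" factors is exactly $1/|x-\alpha_i y|$. Hence each individual $\log|x-\alpha_m y|$ for $m\neq i$ is at most $\log\frac{1}{|x-\alpha_i y|}$ plus a bounded quantity, and is at least (a bounded negative quantity). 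Feeding these one-sided bounds into the sum of squares, together with the single term $(\log|x-\alpha_i y|)^2 = (\log\frac{1}{|x-\alpha_i y|})^2$, and taking square roots, should yield $\|\phi(x,y)\| \le 2\log\frac{1}{|x-\alpha_i y|}$ up to additive constants absorbed into $\|\phi(1,0)\|$; to reach the stated constant $6$ one uses $\sqrt{a_1^2+\dots+a_4^2} \le |a_1|+\dots+|a_4|$ and the fact that each of the four $|\phi_m(x,y)|$ is at most $\log\frac{1}{|x-\alpha_i y|} + \|\phi(1,0)\|$-worth of stuff, giving the factor $4$ — and then the remaining slack (the cross terms from expanding $\phi_m = \phi_m(1,0) + \log|x-\alpha_m y|$ and applying the triangle inequality $\|\phi(x,y)\|\le \|\phi(x,y)-\phi(1,0)\| + \|\phi(1,0)\|$) accounts for the difference between $4$ and $6$. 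Concretely: $\|\phi(x,y)-\phi(1,0)\|^2 = \sum_m (\log|x-\alpha_m y|)^2 \le (\log\tfrac{1}{|x-\alpha_i y|})^2 + 3\bigl(\log\tfrac{1}{|x-\alpha_i y|} + c\bigr)^2$ for a constant $c$ depending on the $\alpha_m$, which is $\le \bigl(2\log\tfrac{1}{|x-\alpha_i y|}\bigr)^2$ once $|x-\alpha_i y|$ is small enough (guaranteed by the large-discriminant hypothesis), and absorbing $c$ into $\|\phi(1,0)\|$ turns the bound $2\log\frac{1}{|x-\alpha_i y|} + \|\phi(1,0)\|$ into the claimed $6\log\frac{1}{|x-\alpha_i y|} + \|\phi(1,0)\|$ with room to spare.

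The main obstacle I anticipate is handling the $\log|x-\alpha_m y|$ terms that are \emph{not} large — i.e.\ making sure the lower bounds $\log|x-\alpha_m y| \ge -(\text{const})$ are genuine. A priori a second factor could also be somewhat small (though not the smallest), and one needs $|x-\alpha_m y|$ bounded below by something like a negative power of $M(F)$ via Lemma~\ref{mahl5} / Lemma~\ref{esug}, so that $|\log|x-\alpha_m y||$ is controlled by $\log M(F)$, which in turn is dominated by $\log\frac{1}{|x-\alpha_i y|}$ only after invoking the discriminant hypothesis. The cleanest route is: use $\prod_{m\neq i}|x-\alpha_m y| = 1/|x-\alpha_i y| \ge 1$, so at most one extra factor can be $<1$, and for that one apply $|x-\alpha_m y| \ge \tfrac12|\alpha_i-\alpha_m|\,|y| - \dots \ge $ a lower bound from Mahler's separation estimate; everything else is routine bookkeeping, and all resulting constants are swallowed by $\|\phi(1,0)\|$ and by the generous constant $6$ and the large value $k=90$.
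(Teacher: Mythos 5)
Your starting point is the same as the paper's: since $F$ is monic, $\phi_m(x,y)-\phi_m(1,0)=\log|x-\alpha_m y|$, so by the triangle inequality and $\|\cdot\|_2\le\|\cdot\|_1$ it suffices to bound $\sum_{m}\bigl|\log|x-\alpha_m y|\bigr|$. But the core estimate you propose for the individual factors is wrong, in two places. First, if $p$ of the four factors satisfy $|x-\alpha_{m}y|<1$, then a factor with $|x-\alpha_m y|\ge 1$ is bounded only by the reciprocal of the product of the $p$ small ones, i.e.\ by $|x-\alpha_i y|^{-p}$, so its logarithm can be as large as $p\log\frac{1}{|x-\alpha_i y|}$ with $p$ up to $3$ --- not ``$\log\frac{1}{|x-\alpha_i y|}$ plus a bounded quantity'' as you claim. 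Second, your assertion that ``at most one extra factor can be $<1$'' because $\prod_{m\ne i}|x-\alpha_m y|\ge 1$ is false (two factors can be small if the third is large). The correct and much simpler observation, which you miss, is that for \emph{any} $m$ with $|x-\alpha_m y|<1$ one has $|x-\alpha_i y|\le|x-\alpha_m y|<1$ by minimality, hence $\bigl|\log|x-\alpha_m y|\bigr|\le\bigl|\log|x-\alpha_i y|\bigr|$ directly; no Mahler separation estimates or lower bounds on $|x-\alpha_m y|$ are needed. The paper's accounting is then: the $p$ small factors contribute at most $p\,\bigl|\log|x-\alpha_i y|\bigr|$ in total, the $4-p$ large factors at most $(4-p)\,p\,\bigl|\log|x-\alpha_i y|\bigr|$, and $(4-p)p+p=5p-p^2\le 6$ for $p\in\{1,2,3,4\}$. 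That maximization is the actual source of the constant $6$, not slack left over from a factor of $4$.

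A second, independent problem is your plan to ``absorb'' constants $c$ depending on the roots into $\|\phi(1,0)\|$ and to invoke the large-discriminant hypothesis to make $|x-\alpha_i y|$ small. The lemma's inequality has no additive slack: $\|\phi(1,0)\|$ is a fixed quantity, not an adjustable constant, and $\log\frac{1}{|x-\alpha_i y|}$ can be $0$ (e.g.\ at the trivial solution $(1,0)$) or merely nonnegative, so nothing guarantees the root-separation constants are dominated. The statement holds for every solution with no size restriction, and the paper's proof uses nothing beyond $\prod_m|x-\alpha_m y|=1$ and the minimality of $|x-\alpha_i y|$.
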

\begin{proof}
Let us assume that 
$$
\left| x - \alpha_{s_{j}} y\right| < 1, \qquad  \textrm{for}  \  1 \leq j \leq p
 $$
 and 
 $$
\left| x - \alpha_{b_{k}} y\right| \geq 1, \qquad \textrm{for} \ 1\leq k \leq 4-p,
$$
where $1\leq p, s_{j} , b_{k} \leq 4$.
We have 
$$
\prod_{k} \left| x - \alpha_{b_{k}} y\right| = \frac{1} {\prod_{j} \left| x - \alpha_{s_{j}} y\right|} .
$$
Therefore, for any $ 1 \leq k \leq 4-p$, we have
$$
\log \left| x - \alpha_{b_{k}} y\right| \leq p \log  \frac{1}{\left| x - \alpha_{i} y\right|}.
 $$
 Since 
  $$
\left| x - \alpha_{i} y\right| = \min_{1\leq j\leq 4} \left| x - \alpha_{j} y\right|,
 $$
we also have
  $$
\left|\log \left| x - \alpha_{s_{j}} y\right| \right| \leq \left| \log  \left| x - \alpha_{i} y\right| \right|. 
 $$ 
From here, we conclude that
  \begin{eqnarray*}
 \left\| \phi(x , y) \right\| & \leq& \left\| \phi(1 , 0) \right\|  +(4 -p)p \left| \log  \left| x - \alpha_{i} y\right| \right| +p\left| \log  \left| x - \alpha_{i} y\right| \right|  \\ \nonumber
  & = &  \left\| \phi(1 , 0) \right\| +
(5p - p^2) \left| \log  \left| x - \alpha_{i} y\right| \right| .
 \end{eqnarray*}
  The function $A(p) = 5p - p^2$ obtains its maximum value $6$ over  $p \in \{1, 2, 3, 4\}$.
    \end{proof}
    
In the following lemma  we approximate the size of $f'(\alpha)$ in terms of the discriminant and heights of $f$ , where $f'$ is the derivative of the polynomial $f$ and $\alpha$ is a root of $f = 0$.

 \begin{lemma}\label{lem10}
Suppose that $F$ is a monic quartic  binary form.
Then $(1 , 0)$ is a solution to the equation $\left|F(x , y)\right| = 1$ and 
$$
 \left\| \phi(1 , 0) \right\| \leq 4 \log \left(2^{9/k} |D|^{\frac{-3}{4k}} M(F)^{\frac{6}{k}}\right),
 $$
\end{lemma}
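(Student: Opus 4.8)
The plan is to evaluate $\phi_m(1,0)$ directly, sandwich $|f'(\alpha_m)|$ between elementary bounds in terms of $|D|$ and $M(F)$, and then pass from a componentwise estimate to the $L_2$ norm via $\|\cdot\|_2\le\|\cdot\|_1$.

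First I would note that since $F$ is monic the polynomial $f(x):=F(x,1)$ is monic of degree $4$, so $F(1,0)=1$ and $M(f)=M(F)=\prod_{i=1}^{4}\max(1,|\alpha_i|)$; in particular $\max(1,|\alpha_m|)\le M(F)$ for every $m$. Substituting $(x,y)=(1,0)$ into (\ref{fi5}) gives
$$\phi_m(1,0)=\frac{1}{4k}\log|D|-\frac1k\log\!\left|f'(\alpha_m)\right|,$$
so everything reduces to two-sided control of $\log|f'(\alpha_m)|$. For the upper bound, writing $f'(\alpha_m)=\prod_{i\ne m}(\alpha_i-\alpha_m)$ and using $|\alpha_i-\alpha_m|\le 2\max(1,|\alpha_i|)\max(1,|\alpha_m|)$ together with the product identity above yields $|f'(\alpha_m)|\le 2^{3}\max(1,|\alpha_m|)^{2}M(F)\le 8\,M(F)^{3}$. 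For the lower bound I would simply quote the left-hand inequality of Lemma \ref{esug} with $n=4$, namely $|f'(\alpha_m)|\ge 2^{-9}|D|\,M(F)^{-6}$.

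Next I would split on the sign of $\phi_m(1,0)$. If $\phi_m(1,0)\ge 0$, inserting the lower bound on $|f'(\alpha_m)|$ gives
$$\left|\phi_m(1,0)\right|\le\frac{9}{k}\log 2-\frac{3}{4k}\log|D|+\frac{6}{k}\log M(F),$$
which is exactly $\log\!\big(2^{9/k}|D|^{-3/(4k)}M(F)^{6/k}\big)$. If instead $\phi_m(1,0)<0$, the upper bound on $|f'(\alpha_m)|$ gives $|\phi_m(1,0)|\le\tfrac{3}{k}\log 2+\tfrac{3}{k}\log M(F)-\tfrac{1}{4k}\log|D|$, and rearranging shows this is $\le\log\!\big(2^{9/k}|D|^{-3/(4k)}M(F)^{6/k}\big)$ precisely when $|D|\le 2^{12}M(F)^{6}$ — which holds because Mahler's inequality (\ref{mahD5}) with $n=4$ already gives $|D|\le 256\,M(F)^{6}$. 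Thus $|\phi_m(1,0)|\le\log\!\big(2^{9/k}|D|^{-3/(4k)}M(F)^{6/k}\big)$ for each $m\in\{1,2,3,4\}$, and summing the four components via $\|\phi(1,0)\|\le\sum_{m=1}^{4}|\phi_m(1,0)|$ yields the stated bound.

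The genuinely routine parts are the two elementary estimates for $|f'(\alpha_m)|$; the only subtlety is the case $\phi_m(1,0)<0$, where one must recall the discriminant--Mahler-measure inequality (\ref{mahD5}) to absorb the $-\tfrac{1}{4k}\log|D|$ term. No transcendence machinery enters at this stage, and the large-discriminant hypothesis is used only for that one comparison.
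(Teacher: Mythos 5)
Your proof is correct and follows the same route as the paper: evaluate $\phi_m(1,0)=\frac{1}{4k}\log|D|-\frac{1}{k}\log|f'(\alpha_m)|$, insert the lower bound for $|f'(\alpha_m)|$ from Lemma \ref{esug} with $n=4$, and pass from the components to the $L_2$ norm. In fact your write-up is more complete than the published one: the paper's two-line proof only substitutes the \emph{lower} bound for $|f'(\alpha_m)|$, which controls $\phi_m(1,0)$ from above but says nothing about $|\phi_m(1,0)|$ when that component is negative (and some component must be negative, since $\prod_m f'(\alpha_m)=\pm D$ forces $\sum_m\phi_m(1,0)=0$). Your second case --- using the elementary upper bound $|f'(\alpha_m)|\le 8M(F)^3$ and checking via (\ref{mahD5}) that $|D|\le 256\,M(F)^6\le 2^{12}M(F)^6$ --- is exactly what is needed to close that gap, and the arithmetic in both cases checks out.
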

\begin{proof}
By the definition of $\phi(x , y)$,
$$
\left\| \phi(1 , 0) \right\| \leq \sum_{m=1}^{4} \log \left| \frac{D^{\frac{1}{4k}}}{\left| f'(\alpha_{m})\right|^{\frac{1}{k} }}\right|
$$
Substituting the lower bound for the   $\left| f'(\alpha_{m})\right|^{\frac{1}{k} }$ from Lemma \ref{esug} completes the proof. 
\end{proof}
   %%%%%%%%%%%%%%%%%%%%%%%%%%%%Granville's suggestion
   
    %---------------------------------------------------------------------------------------------------------------------------------------
\section{Exponential Gap Principle}\label{sec25} 
%-----------------------------------------------------------------------------------------------------------------------------------------

Here, our goal is to show
\begin{prop}\label{exg5}
Suppose that  $(x_{1} , y_{1})$, $(x_{2} , y_{2})$ and $(x_{3} , y_{3})$ are three pairs of non-trivial solutions to (\ref{1.25}) with 
$$
\left| x_{j} - \alpha_{4} y_{j}\right| < 1,
$$
and $|y_{j}| >M(F)^{3.5} $,
for $j \in \{1 , 2 , 3\}$.  If $\left\| \phi(x_{1} , y_{1}) \right\| \leq \left\| \phi(x_{2} , y_{2}) \right\| \leq \left\| \phi(x_{3} , y_{3}) \right\|$ then
$$
\left\| \phi(x_{3} , y_{3}) \right\| > 0.00014 \exp\left(\frac{\left\| \phi(x_{1} , y_{1}) \right\|}{6}\right).
$$
\end{prop}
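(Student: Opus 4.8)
\emph{Strategy.} The three solutions are all exceptionally sharp rational approximations to $\alpha_{4}$, so they must be spread out; I would quantify this by a gap principle and then transport it to the curve $\phi$ via Lemma~\ref{lem15}, the exponential in the conclusion coming from inverting the logarithm in that lemma.

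\emph{Step 1: ordering.} Since each $(x_{j},y_{j})$ is related to $\alpha_{4}$ and $|y_{j}|>M(F)^{3.5}$, Lemma~\ref{mahl5} gives $|(\alpha_{4}-\alpha_{m})y_{j}|\gg M(F)^{1/2}$ for $m\neq 4$, so $|x_{j}-\alpha_{m}y_{j}|$ agrees with $|\alpha_{4}-\alpha_{m}|\,|y_{j}|$ up to a factor $1+o(1)$. Hence on this arc each coordinate $\phi_{m}(x_{j},y_{j})$ with $m\neq 4$ equals $\log|y_{j}|$ plus a quantity bounded in terms of $M(F)$, while $\phi_{4}(x_{j},y_{j})$ equals $-3\log|y_{j}|$ plus such a quantity; thus $\|\phi(x,y)\|^{2}$ is, up to a bounded perturbation, a convex quadratic in $\log|y|$ whose vertex sits at $\log|y|$ of size $O((\log M(F))/k)$, far below $\tfrac{7}{2}\log M(F)$. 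So $|y|\mapsto\|\phi(x,y)\|$ is increasing for $|y|>M(F)^{3.5}$, and the hypothesis $\|\phi(x_{1},y_{1})\|\le\|\phi(x_{2},y_{2})\|\le\|\phi(x_{3},y_{3})\|$ forces $|y_{1}|\le|y_{2}|\le|y_{3}|$.

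\emph{Step 2: the gap step.} For $j\in\{1,2\}$ the integer $x_{j}y_{j+1}-x_{j+1}y_{j}$ is nonzero, and
$$x_{j}y_{j+1}-x_{j+1}y_{j}=y_{j+1}(x_{j}-\alpha_{4}y_{j})-y_{j}(x_{j+1}-\alpha_{4}y_{j+1}).$$
By Lemma~\ref{3S}, $|x_{j+1}-\alpha_{4}y_{j+1}|\le 2^{3}4^{7/2}M(F)^{2}|D|^{-1/2}|y_{j+1}|^{-3}$, so (using $|y_{j}|\le|y_{j+1}|$, $|y_{j+1}|>M(F)^{3.5}$, and $D>D_{0}$) the term $|y_{j}|\,|x_{j+1}-\alpha_{4}y_{j+1}|$ is $<\tfrac12$, whence
$$|y_{j+1}|\ \ge\ \frac{1}{2\,|x_{j}-\alpha_{4}y_{j}|}.$$

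\emph{Step 3: exponentiating, and the conclusion.} For $|y_{1}|>M(F)^{3.5}$ only $\alpha_{4}$ satisfies $|x_{1}-\alpha_{r}y_{1}|<1$ (Lemma~\ref{mahl5} again), so Lemma~\ref{lem15} applies with $p=1$, giving $1/|x_{1}-\alpha_{4}y_{1}|\ge\exp\!\big(\tfrac{1}{6}(\|\phi(x_{1},y_{1})\|-\|\phi(1,0)\|)\big)$; combined with Step~2,
$$|y_{2}|\ \ge\ \tfrac12\,e^{-\|\phi(1,0)\|/6}\,\exp\!\Big(\tfrac{\|\phi(x_{1},y_{1})\|}{6}\Big),$$
and Lemma~\ref{lem10} with $k=90$ keeps the prefactor above a fixed positive constant. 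Applying Step~2 once more (with $j=2$) and Lemma~\ref{3S} to $(x_{3},y_{3})$ gives $\log(1/|x_{3}-\alpha_{4}y_{3}|)\ge 3\log|y_{3}|+\tfrac12\log|D|-O(\log M(F))$, whence $\|\phi(x_{3},y_{3})\|\ge|\phi_{4}(x_{3},y_{3})|\ge\log(1/|x_{3}-\alpha_{4}y_{3}|)-|\phi_{4}(1,0)|$; inserting the lower bound for $|y_{3}|\ge|y_{2}|$ and collecting constants — the $O(\log M(F))$ losses absorbed through Mahler's inequality \eqref{mahD5} once $D>D_{0}$, the $\phi(1,0)$‑corrections negligible because $k=90$ — yields $\|\phi(x_{3},y_{3})\|>0.00014\,\exp(\|\phi(x_{1},y_{1})\|/6)$.

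\emph{Main obstacle.} The delicate point is the asymmetric passage between $\|\phi\|$ and $\log(1/|x-\alpha_{4}y|)$: Lemma~\ref{lem15} loses a factor up to $6$ in one direction and the single coordinate $\phi_{4}$ recovers only a factor $\approx1$ in the other, so the exponential gain has to be generated on the $|y|$‑side (Step~2) and carried intact to $(x_{3},y_{3})$ before being re‑expressed through $\phi_{4}$; certifying the explicit constant $0.00014$ while all the $M(F)$‑ and $D$‑dependent error terms cancel is precisely where the quantitative hypotheses $D>D_{0}$ and the enlarged parameter $k=90$ are consumed.
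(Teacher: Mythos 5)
There is a genuine gap, and it is structural: your argument cannot produce the exponential lower bound in the conclusion. Your chain of estimates is $\log|y_{2}|\gtrsim\tfrac{1}{6}\|\phi(x_{1},y_{1})\|$ (Step 2 plus Lemma \ref{lem15}), then $\log|y_{3}|\gtrsim 3\log|y_{2}|$, and finally $\|\phi(x_{3},y_{3})\|\geq|\phi_{4}(x_{3},y_{3})|\approx\log\bigl(1/|x_{3}-\alpha_{4}y_{3}|\bigr)\approx 3\log|y_{3}|+O(\log M(F))$. Following this through gives $\|\phi(x_{3},y_{3})\|\gtrsim\tfrac{3}{2}\,\|\phi(x_{1},y_{1})\|+O(1)$ --- a \emph{linear} relation. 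The exponential lower bound $|y_{2}|\geq\tfrac12 e^{-\|\phi(1,0)\|/6}\exp(\|\phi(x_{1},y_{1})\|/6)$ that you correctly derive is a bound on $|y_{2}|$ itself, but $\|\phi\|$ is of the order of $\log|y|$, so the exponential is destroyed the moment you re-express the conclusion through $\phi_{4}$. In your final sentence you insert the exponential lower bound for $|y_{3}|$ into an expression involving $\log|y_{3}|$ and claim the exponential survives; it does not. No rearrangement of the $y$-gap principle with only three solutions can close this, so the constant $0.00014$ is not certifiable by this route.

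The paper's proof is entirely different and is where the exponential actually comes from. The three points $\phi(x_{j},y_{j})$ form a triangle $\Delta$ in $\mathbb{R}^{4}$. Its area is bounded \emph{below} by an absolute constant: for any two of the solutions, $\phi(x_{i},y_{i})-\phi(x_{j},y_{j})$ is the logarithmic vector of a unit $\frac{x_{i}-\alpha y_{i}}{x_{j}-\alpha y_{j}}$, so by Voutier's Lehmer-type bound (Proposition \ref{hM5}) each side of $\Delta$ has length at least $2(\log\log 4/\log 4)^{3}>0.026$, whence $\mathrm{Area}(\Delta)>\sqrt{3}(\log\log 4/\log 4)^{6}>0.00029$. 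Its area is bounded \emph{above} because, by Lemma \ref{gp15}, each vertex lies within $\exp\bigl(-\|\phi(x_{j},y_{j})\|/6\bigr)$ of a fixed line $\mathbf{L_{4}}$; hence $\Delta$ has height at most $2\exp\bigl(-\|\phi(x_{1},y_{1})\|/6\bigr)$ and base at most $2\|\phi(x_{3},y_{3})\|$, so $\mathrm{Area}(\Delta)<2\|\phi(x_{3},y_{3})\|\exp\bigl(-\|\phi(x_{1},y_{1})\|/6\bigr)$. Comparing the two bounds gives the proposition directly. The exponential gain is thus generated by the \emph{flatness} of the triangle (all three points exponentially close to one line) played against a uniform lower bound for its area, not by a gap principle in $|y|$.
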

In \cite{AO5}, we showed that when $F(x , y)$ splits in $\mathbb{R}$, i.e. when the signature is $(4 , 0)$,  assuming that $y_{3} > y_{1} > M(F)^{6}$, one can get the following inequality
$$
\left\| \phi(x_{3} , y_{3}) \right\| > \exp\left(\frac{\left\| \phi(x_{1} , y_{1}) \right\|}{6}\right)2 \, \sqrt{3} \log^4 \frac{1 + \sqrt{5}}{2},$$
that is sharper than the inequality in Proposition \ref{exg5}. This makes the value of $D_{0}$ in our main theorem smaller and does not decrease the upper bound for the number of solutions.

Observe that  for three pairs of solutions in Proposition \ref{exg5},  the three points $\phi(x_{1}, y_{1})$,  $\phi(x_{2}, y_{2})$ and  $\phi(x_{3}, y_{3})$ form a triangle $\Delta$. To establish Proposition \ref{exg5}, we will find a lower bound and an upper bound for the area of $\Delta$. Then comparing these bounds, Proposition \ref{exg5} will be proven. The length of each side of $\Delta$ is less than $2\left\| \phi(x_{3} , y_{3}) \right\|$. Lemma \ref{gp15} gives an upper bound for the height of $\Delta$. 
 Let $(x , y) \neq (1 , 0)$ be a solution to (\ref{1.25}) and let $t = \frac{x}{y}$. We have
$$
\phi(x , y) = \phi(t) = \sum_{i=1}^{4} \log \frac{|t - \alpha_{i}|}{\left| f'(\alpha_{i})\right|^{\frac{1}{k}}} {\bf{b_{i}}},
$$
where,
$$
{\bf{b_{1}}} = \frac{1}{4}(3, -1, -1, -1) , \qquad  {\bf{b_{2}}} = \frac{1}{4}(-1, 3, -1, -1),
$$
$$
{\bf{b_{3}}} = \frac{1}{4}(-1, -1,3, -1) , \qquad  {\bf{b_{4}}} = \frac{1}{4}(-1,  -1, -1, 3),
$$
Without loss of generality, we will assume  that $\alpha_{4}$ is a real root and for the pair of solution $(x , y)$ we have 
$$
\left| x - \alpha_{4} y\right| \leq 1.
$$
We may write (see the definition of $\phi(x , y)$ in (\ref{dfi}))
\begin{equation}\label{E5}
\phi(x , y) = \phi(t) = \sum_{i=1}^{3} \log \frac{|t - \alpha_{i}|}{\left| f'(\alpha_{i})\right|^{\frac{1}{k}}} {\bf{c_{i}} }+ E_{4} {\bf{b_{4}}},
\end{equation}
where, for $1\leq i \leq 3$,
$$
{\bf{c_{i}}} = {\bf{b_{i}}} + \frac{1}{3} {\bf{b_{4}}} , \quad  E_{4} = \log \frac{\left| t - \alpha_{4} \right|}{\left|f'(\alpha_{4})\right|^{\frac{1}{k}}} - \frac{1}{3} \sum_{i=1}^{3} \log \frac{|t - \alpha_{i}|}{\left| f'(\alpha_{i})\right|^{\frac{1}{k}}} 
$$
One can easily observe that
$$
{\bf{c_{i}}} \perp {\bf{b_{4}}}, \  \textrm{for}\   1\leq i \leq 4.
$$

\begin{lemma}\label{gp15}
Let
$$
{\bf{L_{4}}} = \sum_{i=1}^{3} \log \frac{|\alpha_{4} - \alpha_{i}|}{\left| f'(\alpha_{i})\right|^{\frac{1}{2}}} {\bf{c_{i}}} + z {\bf{b_{4}}} , \quad z \in \mathbb{R}.
$$
Suppose that $(x , y) \neq (1 , 0)$ is a  pair of solution to (\ref{1.25}) with
$$
\left| x - \alpha_{4} y\right| = \min_{1\leq j\leq 4} \left| x - \alpha_{j} y\right|
 $$ 
and $y \geq M(F)^{3.5}$.
Then the distance between $\phi(x, y)$ and the line $\bf{L_{4}}$ is less than 
$$
 \exp\left(\frac{-\left\| \phi(x , y) \right\|}{6}\right).
$$
\end{lemma}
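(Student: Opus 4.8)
The plan is to exploit the orthogonality that was deliberately engineered into the decomposition (\ref{E5}). Since $\mathbf{c_i}\perp\mathbf{b_4}$ for $1\le i\le 4$, the line $\mathbf{L_4}$ — which has direction $\mathbf{b_4}$ and whose base point is exactly the limit of the $\mathbf{c_i}$-components of $\phi(t)$ as $t\to\alpha_4$ — has the property that the nearest point of $\mathbf{L_4}$ to $\phi(x,y)$ is obtained simply by choosing $z$ to cancel the $\mathbf{b_4}$-component, leaving the $\mathbf{c_i}$-part untouched. Writing $t=x/y$, this shows that
$$
\mathrm{dist}\bigl(\phi(x,y),\mathbf{L_4}\bigr)=\left\|\sum_{i=1}^{3}\log\frac{|t-\alpha_i|}{|\alpha_4-\alpha_i|}\,\mathbf{c_i}\right\|=\left\|\sum_{i=1}^{3}\log\left|1+\frac{t-\alpha_4}{\alpha_4-\alpha_i}\right|\,\mathbf{c_i}\right\|.
$$
So the whole lemma reduces to showing this last norm is small, using that $t$ is very close to $\alpha_4$.

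First I would estimate the coefficients. Write $|x-\alpha_4 y|=\eta^{-1}$ with $\eta\ge 1$ (legitimate since $|x-\alpha_4 y|\le 1$ as $(x,y)$ is related to $\alpha_4$ and is a solution). Then $|t-\alpha_4|=(\eta|y|)^{-1}\le \eta^{-1}M(F)^{-3.5}$ by the hypothesis $y\ge M(F)^{3.5}$, whereas Lemma \ref{mahl5} gives $|\alpha_4-\alpha_i|\ge \sqrt{3}\,4^{-3}M(F)^{-3}$ for $i\ne 4$. Hence $\bigl|\tfrac{t-\alpha_4}{\alpha_4-\alpha_i}\bigr|\le \tfrac{64}{\sqrt 3\,\eta}M(F)^{-1/2}$, which is at most $\tfrac12$ once $D_F$ (hence $M(F)$, via (\ref{mahD5})) is large, so $\bigl|\log|1+w|\bigr|\le 2|w|$ applies; using $\|\mathbf{c_i}\|=\sqrt{2/3}$ together with the triangle inequality, the distance is at most $\sqrt{6}\cdot 2\cdot\tfrac{64}{\sqrt 3\,\eta}M(F)^{-1/2}=\tfrac{128\sqrt 2}{\eta}M(F)^{-1/2}$.

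Next I would bound $\|\phi(x,y)\|$ from above. Applying Lemma \ref{lem15} with the root $\alpha_4$ gives $\|\phi(x,y)\|\le 6\log\eta+\|\phi(1,0)\|$, so $\exp(-\|\phi(x,y)\|/6)\ge \eta^{-1}\exp(-\|\phi(1,0)\|/6)$. Comparing with the distance bound, the factor $\eta^{-1}$ cancels and it suffices to check $128\sqrt 2\,M(F)^{-1/2}<\exp(-\|\phi(1,0)\|/6)$. By Lemma \ref{lem10} one has $\exp(\|\phi(1,0)\|/6)\le 2^{6/k}|D_F|^{-1/(2k)}M(F)^{4/k}$, so the required inequality becomes $128\sqrt 2\cdot 2^{6/k}|D_F|^{-1/(2k)}<M(F)^{1/2-4/k}$. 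With $k=90$ the exponent $1/2-4/k=41/90$ is positive, and since $M(F)\ge (D_F/4^4)^{1/6}$ the right-hand side grows like a positive power of $D_F$ while the left-hand side decays; the inequality therefore holds once $D_F$ exceeds an explicit constant, which we absorb into $D_0$.

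I expect the only delicate point to be the bookkeeping with $\eta=|x-\alpha_4 y|^{-1}$: both the distance estimate and the target $\exp(-\|\phi(x,y)\|/6)$ scale like $\eta^{-1}$, and the argument only closes because Lemma \ref{lem15} contributes exactly the term $6\log\eta$ while the distance estimate contributes exactly the factor $\eta^{-1}$. After those match, what survives is the clean comparison of $M(F)^{-1/2}$ against $\exp(\|\phi(1,0)\|/6)=M(F)^{O(1/k)}$, which is precisely where the choice $k=90$ and the threshold $y\ge M(F)^{3.5}$, together with the large-discriminant hypothesis, are used.
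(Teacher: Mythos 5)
Your proof is correct and is essentially the argument the paper intends (the paper itself only defers to Lemma 6.3 of \cite{AO5}, whose proof follows exactly this scheme: orthogonality of the $\mathbf{c_i}$ to $\mathbf{b_4}$ reduces the distance to $\bigl\|\sum_{i=1}^{3}\log\bigl|1+\tfrac{t-\alpha_4}{\alpha_4-\alpha_i}\bigr|\,\mathbf{c_i}\bigr\|$, which is then bounded via Lemma \ref{mahl5} and compared with $\exp(-\|\phi(x,y)\|/6)$ through Lemmas \ref{lem15} and \ref{lem10}); you also correctly read the exponent $\tfrac12$ in the displayed definition of $\mathbf{L_4}$ as a typo for $\tfrac1k$, without which the cancellation of the $|f'(\alpha_i)|$ factors would fail. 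The constants $\|\mathbf{c_i}\|=\sqrt{2/3}$ and $128\sqrt2$, the cancellation of $\eta=|x-\alpha_4y|^{-1}$, and the final exponent $\tfrac12-\tfrac4k=\tfrac{41}{90}$ all check out.
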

\begin{proof}
The proof goes exactly as the proof of Lemma 6.3 in \cite{AO5}.
\end{proof}

Lemma \ref{gp15} shows that the height of $\Delta$ is at most 
$$
2\, \exp\left(\frac{-\left\| \phi(x_{1} , y_{1}) \right\|}{6}\right).
$$
Therefore, the area of $\Delta$ is less than 
\begin{equation}\label{up5}
2   \left\| \phi(x_{3} , y_{3}) \right\| \exp\left(\frac{-\left\| \phi(x_{1} , y_{1}) \right\|}{6}\right).
\end{equation}

Let us now estimate the area of $\Delta$ from below. Since
$$
F(x , y) = (x - \alpha_{1} y) (x - \alpha_{2} y)(x - \alpha_{3}y) (x - \alpha_{4}y) = \pm 1,
$$
we conclude that $x - \alpha_{i} y$ is a unit in $\mathbb{Q}(\alpha_{i})$ when $(x , y)$ is a pair of solution to (\ref{1.25}). Suppose that $(x_{1} , y_{1})$ and $(x_{2} , y_{2})$ are two pairs of non-trivial solutions to (\ref{1.25}). Then 
$$
\phi(x_{1} , y_{1}) - \phi(x_{2} , y_{2}) = \left(\log \left|\frac{x_{1} - \alpha_{1}y}{x_{2} - \alpha_{1}y_{2}}\right|,   \ldots , \log \left|\frac{x_{1} - \alpha_{4}y_{1}}{x_{2} - \alpha_{4}y_{2}}\right|\right) = \vec{e}.
$$
Since $ \frac{x_{1} - \alpha_{i}y}{x_{2} - \alpha_{i}y_{2}}$ is a unit in $\mathbb{Q}(\alpha_{i})$, by Proposition \ref{hM5}, we have
$$ 
\| \vec{e} \| = 8 h \left( \frac{x_{1} - \alpha_{i}y}{x_{2} - \alpha_{i}y_{2}}\right) > 2\left(\frac{\log \log 4}{\log 4}\right)^3 > 0.026.
$$
Now we can estimate each side of $\Delta$ from below to conclude that the area of the triangle $\Delta$ is greater than 
$$
\sqrt{3}\left(\frac{\log \log 4}{\log 4}\right)^6 > 0.00029.
$$
Comparing this with (\ref{up5}) we conclude that
$$
2\, \left\| \phi(x_{3} , y_{3}) \right\|  \exp\left(\frac{-\left\| \phi(x_{1} , y_{1}) \right\|}{6}\right) > 0.00029.
$$
 Proposition \ref{exg5} is  immediate from here.
 
Note that when  $\mathbb{Q}(\alpha_{i})$ is a totally real field, we have a better upper bound for the size of $\vec{e}$;
$$
\| \vec{e} \| \geq 4 \log^2 \frac{1 + \sqrt{5}}{2}
$$
(see exercise 2 on page 367 of \cite{Poh5}). Now we can estimate each side of $\Delta$ from below by
$$
4 \sqrt{3} \log^4 \frac{1 + \sqrt{5}}{2}.
$$
 %-------------------------------------------------------------------------------------------------------------------------------
 
% \section{$\phi(x , y)$ as linear forms in logarithms}\label{sec35}
 %-----------------------------------------------------------------------------------------
 
  In order to study the curve $\phi(t)$,  we will consider some well-known  geometric properties of the unit group $U$ of $K = \mathbb{Q}(\alpha)$, where $\alpha$ is a root of $F(x , 1) = 0$. Let $r$ be the number of real conjugate fields of $K$ and $2s$ the number of
 complex conjugate fields of $K$. Then by Dirichlet's unit theorem, the ring of integers $O_{K}$ contains $ r + s - 1$ fundamental units and  there are three possibilities:
  
%\begin{thm}[Dirichlet's  Unit Theorem]
 %Let $K$ be an algebraic number field of degree $n$. Let $r$ be the number of real conjugate fields of $K$ and $2s$ the number of
 %complex conjugate fields of $K$. Then the ring of integers $O_{K}$ contains $ r + s - 1$ fundamental units $\epsilon_{1} , \ldots , 
%\epsilon_{r+s-1}$ such that each unit of $O_{K}$ can be expressed uniquely in the form $u \epsilon_{1}^{n_{1}} \ldots \epsilon_{r+s-1}^
%{n_{r+s-1}}$ , where $u$ is a root of unity in $O_{K}$ and $n_{1}, \ldots , n_{r+s-1}$ are integers.
% \end{thm}
\noindent If $F(x , 1) = 0$ has no real roots, we call $F$ a form of signature $(0 , 2)$.\newline
If $F(x , 1) = 0$ has $4$ real roots, we call $F$ a form of signature $(4 , 0)$.\newline
If $F(x , 1) = 0$ has $2$ real roots, we call $F$ a form of signature $(2 , 1)$.\newline

Here we are working with quartic forms ($r+2s =4$) we have $ r+s-1 = \frac{r}{2} + 1$ fundamental units in $O_{K}$.
Let $\tau$ be the obvious restriction of the embedding of $\mathbb{Q}(\alpha)$ in $\mathbb{C}^{4}$; i.e. $\tau(u) = (u_{1}, u_{2}, u_{3}, u_{4})$, where $u_{i}$ are algebraic conjugates of $u$. 
  By Dirichlet's unit theorem, we have a sequence of mappings
\begin{equation}\label{ta5}
\tau :  U  \longmapsto V  \subset \mathbb{C}^{4}
\end{equation}
and
\begin{equation}\label{lo5}
 \log : V   \longmapsto \Lambda , 
\end{equation}
  where $V$ is the image of the map $\tau$, $\Lambda$ is a $r+s-1$-dimensional lattice, and the mapping $\log$ is defined as follows: \newline
For $(x_{1} , \ldots , x_{r+s-1}) \in  V  $ ,  
 $$
 \log(x_{1} , \ldots , x_{r+s-1}) = (\log|x_{1}|, \ldots,  \log|x_{r+s-1}|),
 $$
where $r$ and $s$ are defined in Dirichlet's unit Theorem. We have $r+s-1\leq 3$.
 If  $(x , y)$ is a pair of solutions to (\ref{1.25}) then 
 $$
 (x - \alpha_{j}y) 
$$
  is a unit in $\mathbb{Q}(\alpha_{i})$.  Suppose that $\mathbb{Q}(\alpha_{i})$ is a real number field and
$$
\lambda_{2}, \ldots ,\lambda_{r+s}
$$
 are  fundamental units of $\mathbb{Q}(\alpha_{i})$ and are chosen so that 
$$
\log\left(\tau(\lambda_{2})\right) , \ldots , \log \left(\tau(\lambda_{r+s})\right)
$$
are respectively first to $r+s-1$ successive minimas of the lattice $\Lambda$. 
%form a \emph{reduced} basis for the lattice $\Lambda$. 
 Let us assume that
  $$
  \left\|\log \left( \tau(\lambda_{2})\right)\right\|  \leq \ldots \leq \left\| \log\left(\tau(\lambda_{r+s})\right)\right\|,
   $$
form a \emph{reduced} basis for the lattice $\Lambda$, so that
 \begin{equation}\label{rep5}
 \phi(x , y) = \phi(1 , 0) + \sum_{k = 2}^{r+s} m_{k}\log\left(\tau(\lambda_{k})\right), \qquad m_{k} \in \mathbb{Z},
 \end{equation}
 with 
  \begin{equation}\label{mk}
  \left\| m_{k} \log \left( \tau(\lambda_{k})\right)\right\|  \leq      \left\| \phi(x , y) - \phi(1 , 0)   \right\|.
  \end{equation}

%------------------------------------------------------------------------------------------------------
\section{ Geometry of $\phi(x , y)$}\label{AA}
%---------------------------------------------------------------------------------------------------

\begin{lemma}\label{sma}
  For every fixed integer $m$, there are at most $2r+2s -2$ solutions $(x , y)$ to (\ref{1.25}) for which  in (\ref{rep5}), $m_{r+s} = m$.
  \end{lemma}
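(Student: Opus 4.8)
The plan is to pass from solutions to lattice points and then argue geometrically. First I would check that, after the identification $(x,y)\sim(-x,-y)$, the map $(x,y)\mapsto\phi(x,y)$ is injective on the solutions of (\ref{1.25}): if $\phi(x,y)=\phi(x',y')$ then $(x-\alpha_i y)/(x'-\alpha_i y')$ is a unit of $\mathbb{Q}(\alpha_i)$ of absolute value $1$ at every archimedean place, hence a root of unity, and since in the signatures $(2,1)$ and $(4,0)$ the field $\mathbb{Q}(\alpha_i)$ embeds in $\mathbb{R}$ this forces $(x,y)=\pm(x',y')$; for signature $(0,2)$ the assertion can be treated separately, using Lemma~\ref{Gr}. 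Hence it suffices to count lattice points $\phi(x,y)-\phi(1,0)=\sum_{k=2}^{r+s}m_k\log(\tau(\lambda_k))\in\Lambda$ with $m_{r+s}=m$ prescribed. Fixing $m_{r+s}=m$ confines such a point to the affine sublattice $A=\phi(1,0)+m\log(\tau(\lambda_{r+s}))+\Lambda'$ with $\Lambda'=\sum_{k=2}^{r+s-1}\mathbb{Z}\log(\tau(\lambda_k))$ of rank $r+s-2$; write $W=\Lambda'\otimes\mathbb{R}$, a subspace of dimension $r+s-2$ of the plane $H=\{v\in\mathbb{R}^4:v_1+v_2+v_3+v_4=0\}$ in which $\Lambda$ lives.

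Next I would split the solutions carrying $m_{r+s}=m$ according to the root they are related to, which yields at most $r+s$ classes (conjugate roots giving one class, by the Remark of Section~\ref{p1}), and dispose of the complex classes at once: by Lemma~\ref{Gr} a solution related to a non-real root has $|y|<M(F)^{9/4}$ once $D_0$ is large, and the transversality argument below then makes each such class contribute at most one solution with $m_{r+s}=m$. For a class attached to a real root $\alpha_j$, Lemma~\ref{gp15}, with $\alpha_j$ in the role of $\alpha_4$, places $\phi(x,y)$ within $\exp(-\|\phi(x,y)\|/6)$ of the line $\mathbf{L}_j$ of direction $\mathbf{b}_j$ whenever $y\ge M(F)^{3.5}$. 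The vectors $\mathbf{b}_1,\dots,\mathbf{b}_4$ lie in $H$ and are in general position there (any three are linearly independent), so $W$ contains at most $r+s-2$ of them; for a class with $\mathbf{b}_j\notin W$ the line $\mathbf{L}_j$ meets $A$ transversally in a single point, with transversality angle bounded below through the root separation (Lemma~\ref{mahl5}), so a large-$y$ solution of that class lying in $A$ would have $\|\phi(x,y)\|$, hence $|y|$, bounded — impossible once $D_0$ is large — leaving, together with the small-$y$ bookkeeping, at most one solution of that class with $m_{r+s}=m$. For the at most $r+s-2$ classes with $\mathbf{b}_j\in W$ I would bound the count by $2$ by playing the exponential gap principle Proposition~\ref{exg5} against the spacing $\|\vec{e}\|\ge 0.026$ of the $\phi$-images of distinct solutions inside the low-dimensional lattice $A$. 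Writing $a\le r+s-2$ for the number of classes with $\mathbf{b}_j\in W$, the total number of solutions with $m_{r+s}=m$ is then at most $2a+(r+s-a)=a+(r+s)\le 2r+2s-2$, as claimed.

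The step I expect to be the real obstacle is the one I have compressed above: controlling, uniformly in $m$, the small-$y$ solutions carrying a given value of $m_{r+s}$, since Lemma~\ref{gp15} is only stated for $y\ge M(F)^{3.5}$, so one needs either its extension to all $(x,y)$ with $\|\phi(x,y)\|$ large, or a separate Stewart-type gap estimate keyed to the coordinate $m_{r+s}$ (in the spirit of Lemmas~\ref{Sl5} and~\ref{SC1}); and, relatedly, pinning the contribution of a class with $\mathbf{b}_j\in W$ at exactly $2$, where the transversality argument collapses and one must instead exploit the rank of $A$ together with the exponential spacing of the $\phi$-images along $\mathbf{L}_j$ to cut the chain down to length $2$.
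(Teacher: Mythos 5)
Your proposal does not close the gap that you yourself flag at the end: Lemma \ref{sma} must count \emph{all} solutions with $m_{r+s}=m$, with no restriction on the size of $y$, whereas every tool you invoke for the main count (Lemma \ref{gp15}, Proposition \ref{exg5}, the transversality of $\mathbf{L}_j$ with the affine span of your sublattice $A$) is only available for $y\geq M(F)^{3.5}$. Since the lemma is used precisely to define the exceptional set $\mathfrak{A}$, which must absorb small-$y$ solutions as well, deferring the small-$y$ case to ``a separate Stewart-type gap estimate keyed to $m_{r+s}$'' leaves the statement unproved. There are also problems inside the large-$y$ part: $A$ is a translate of a rank-$(r+s-2)$ lattice whose affine span contains the summand $m\log\left(\tau(\lambda_{r+s})\right)$, so the unique intersection point of $\mathbf{L}_j$ with that span moves with $m$, and the bound you extract on $\|\phi(x,y)\|$ cannot be contradicted by taking $D_0$ large uniformly in $m$; moreover the claim that a class with $\mathbf{b}_j\in W$ contributes at most $2$ is asserted by appeal to Proposition \ref{exg5} rather than derived.

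The paper's own proof is far more elementary and avoids the large/small $y$ dichotomy entirely. Fixing $m_{r+s}=m$ confines $\phi(x,y)$ to an $(r+s-2)$-dimensional affine hyperplane $S_{1}$ of the span $S$ of the unit lattice. One then parametrizes the whole logarithmic curve by $t=x/y$ via $y(t)=|f(t)|^{-1/4}$, $x(t)=ty(t)$, so that every solution, regardless of the size of $y$, lands on the real curve $\phi(t)$ at a distinct value of $t$, and one counts intersections of this curve with $S_{1}$ by a Rolle-type argument: for $\vec{N}$ a normal to $S_{1}$, the derivative $\frac{d}{dt}\left(\vec{N}\cdot\phi(t)\right)$ equals $P(t)/Q(t)$ with $Q(t)=\prod(t-\alpha_{i})$ (one factor per real root and per conjugate pair) and $\deg P=r+s-1$, so the derivative has at most $r+s-1$ zeros, from which the paper deduces that $\vec{N}\cdot\phi(t)=0$ has at most $2r+2s-2$ roots. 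If you want to salvage your approach you would need an analogue of this global counting in the small-$y$ range, at which point you would essentially be reproducing the paper's argument; I would recommend adopting the curve-intersection count directly.
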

  \begin{proof}
  Let $S$ be the $(r+s-1)$-dimensional affine space of all vectors  
$$\phi(1 , 0) + \sum_{i = 2}^{r+s} \mu_{i}\log\left(\tau(\lambda_{i})\right)\qquad (\mu_{i} \in \mathbb{R}).$$
 Let $\mu_{r+s} = m$. Then the points  
  $$
  \phi(1 , 0) + \sum_{i = 2}^{r+s-1} \mu_{i}\log\left(\tau(\lambda_{i})\right)+ m \log\left(\tau(\lambda_{r+s})\right) 
  $$
  form an $(r+s-2)$-dimensional hyperplane $S_{1}$ of $S$. Put $f(t) = F(t, 1)$.  For $t \in \mathbb{R}$, define  $y(t)$ and $x(t)$ as follows:
\begin{eqnarray*}
y(t)&: =& |f(t)|^{-1/4},\\
x(t)&: = &ty(t).
\end{eqnarray*}
Similar to $\phi(x , y)$, we define the curve $\phi(t)$ on $\mathbb{R}$:
\begin{equation*}
\phi(t) = \left( \phi_{1}(t) , \phi_{2}(t), \phi_{3}(t) , \phi_{4}(t) \right),
\end{equation*}
where, for $1\leq m \leq 4$
\begin{equation*}
\phi_{m}(t) = \log \left| \frac{D^{\frac{1}{4k}}(x(t) - \alpha_{m}y(t))}{\left| f'(\alpha_{m})\right|^{\frac{1}{k} }}\right|.
\end{equation*}
Observe that for an integral solution $(x , y)$ to (\ref{1.25}) and $\phi(x , y)$ defined in (\ref{dfi}), we have
$$
\phi(x , y) = \phi\left(\frac{x}{y}\right).
$$

Let $\vec{N} = (N_{1} , N_{2} , N_{3} , N_{4}) \in S$ be the normal vector of $S_{1}$.
%Let $t = \frac{x}{y}$ and $\phi(t) = \phi(x , y)$. 
Then the number  of times that the curve $\phi(t)$ intersects $S_{1}$ equals the number of solutions in $t$  to 
  \begin{equation}\label{N}
  \vec{N}. \phi(t) = 0.
  \end{equation}
We have
$$
\lim_{t \rightarrow \alpha_{i}^{+} }\log|t - \alpha_{i}| =  -\infty
$$
and
$$
\lim_{t \rightarrow \alpha_{i}^{-} }\log|t - \alpha_{i}| =  -\infty .
$$
   Note that if $\alpha_{i}$ is a non-real root of $F(x , 1)$ then $\bar{\alpha_{i}}$, the complex conjugate of $\alpha_{i}$ is also a root and we have
$$
\log|t - \alpha_{i}| = \log|t - \bar{\alpha_{i}}|.
$$
If $\alpha_{1}, \ldots, \alpha_{r}$ are the reals roots and  $\alpha_{r+1}, \ldots, \alpha_{r+s},\alpha_{r+s+1}, \ldots, \alpha_{r+2s}$ are non-real roots with $\alpha_{r+s+k} = \bar{\alpha}_{r+k}$, then
the derivative $ \frac{d}{dt}  \left( \vec{N}. \phi(t) \right)$ can be written as  $\frac{P(t)}{Q(t)}$,
 where $Q(t) = (t -\alpha_{1}) \ldots  (t -\alpha_{r})(t -\alpha_{r+1}) \ldots  (t -\alpha_{r+s})$ and  $P(t)$ is a polynomial of degree  $r+s -1$. Therefore, the derivative has at most $r+s -1$ zeros and consequently, the equation (\ref{N}) can not have more than $2r+2s -2$ solutions.
   \end{proof}

\begin{lemma}\label{lem100}
Let $F$ be an irreducible monic quartic form. Suppose that $(x , y)$ is a solution to the Thue equation $F(x , y) = \pm 1$ with $y \geq M(F)^{3.5}$. Then
$$
\left\| \phi(1 , 0) \right\| < \left\| \phi(x , y) \right\|.
$$
\end{lemma}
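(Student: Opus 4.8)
The plan is to show that $\|\phi(x,y)\|$ grows with $y$ and that this forces it to exceed the fixed quantity $\|\phi(1,0)\|$ once $y$ is as large as $M(F)^{3.5}$. First I would use Lemma \ref{lem15}: if $(x,y)$ is related to $\alpha_i$ then
$$
\|\phi(x,y)\| \leq 6\log\frac{1}{|x-\alpha_i y|} + \|\phi(1,0)\|,
$$
but I actually want a \emph{lower} bound for $\|\phi(x,y)\|$, so the real engine is the definition \eqref{fi5}--\eqref{dfi}. The key observation is that $\|\phi(x,y)\|$ is bounded below by the absolute value of any single coordinate $|\phi_m(x,y)|$, and for the index $m=i$ to which $(x,y)$ is related we have $|x-\alpha_i y|\le 1$, while $y\ge M(F)^{3.5}$ is large. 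Writing $|x-\alpha_i y| = |y|\cdot|x/y - \alpha_i|$ and recalling from Lemma \ref{3S} (applied with $|F(x,y)|=1$) that $|x/y-\alpha_i|$ is very small — of order $M(F)^2/(|D|^{1/2}|y|^4)$ — one sees $\phi_i(x,y) = \log\bigl(D^{1/4k}|x-\alpha_i y| / |f'(\alpha_i)|^{1/k}\bigr)$ is a large negative number whose magnitude grows like $3\log|y|$ up to lower-order terms in $\log M(F)$ and $\log|D|$.

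Concretely, the steps in order: (1) Fix $i$ with $|x-\alpha_i y| = \min_j|x-\alpha_j y|$; then $|x-\alpha_i y|\le 1$. (2) Combine Lemma \ref{3S} with $|F(x,y)|=1$ to get $|x-\alpha_i y| \le 2^{3}4^{7/2}M(F)^2/(|D|^{1/2}|y|^3)$, hence $\phi_i(x,y) \le \frac{1}{4k}\log|D| - \frac{1}{k}\log|f'(\alpha_i)| + \log\bigl(2^3 4^{7/2}\bigr) + \frac{2}{1}\log M(F) - 3\log|y|$ — a genuinely negative quantity once $|y|\ge M(F)^{3.5}$ and $|D|\ge D_0$, using \eqref{mahD5} to compare $\log|D|$ with $\log M(F)$ and Lemma \ref{esug} to bound $\log|f'(\alpha_i)|$ from above (and below). (3) Therefore $\|\phi(x,y)\| \ge |\phi_i(x,y)| \ge 3\log|y| - c_1\log M(F) - c_2$ for explicit constants, which since $\log|y|\ge 3.5\log M(F)$ gives $\|\phi(x,y)\| \ge (10.5 - c_1)\log M(F) - c_2$. (4) On the other side, Lemma \ref{lem10} gives $\|\phi(1,0)\| \le 4\log\bigl(2^{9/k}|D|^{-3/4k}M(F)^{6/k}\bigr) \le \frac{24}{k}\log M(F) + O(1)$ — which, with $k=90$, is a \emph{tiny} multiple of $\log M(F)$. (5) Compare: for $M(F)$ large enough (equivalently $D\ge D_0$), $(10.5-c_1)\log M(F) - c_2 > \tfrac{24}{k}\log M(F) + O(1)$, so $\|\phi(x,y)\| > \|\phi(1,0)\|$.

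The main obstacle I anticipate is bookkeeping the constants so that everything is subsumed into the hypothesis $D\ge D_0$ rather than leaving a residual condition: in particular one must verify that the lower bound $3\log|y|$ really dominates, which requires the factor $3$ (coming from the exponent $|y|^4$ in Lemma \ref{3S} minus one power of $|y|$ absorbed elsewhere, or directly from $|y|^4$ if one uses $|x-\alpha_i y|\le |D|^{-1/2}\cdot(\text{stuff})\cdot|y|^{-3}$) to beat the competing $\frac{6}{k}$-type terms and the $-c_1\log M(F)$ from $\log|f'(\alpha_i)|$ and $\log|D|$; here the flexibility to take $k=90$ large is exactly what makes $\|\phi(1,0)\|$ negligible. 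A secondary subtlety: $\|\phi(x,y)\|$ being the $L_2$ norm, one should be slightly careful that $|\phi_i|$ really is a legitimate lower bound (it is, since each coordinate contributes nonnegatively to the square), and that one has not accidentally chosen the index $i$ for which $\phi_i$ is small in absolute value — but the minimality in the definition of "related to $\alpha_i$" guarantees $\phi_i$ is the most negative coordinate, so this is exactly the coordinate we want.
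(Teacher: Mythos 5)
Your proof is correct, but it reaches the conclusion by a genuinely different route than the paper. Both arguments share the key reduction --- a single coordinate of $\phi(x,y)$ already exceeds $\left\|\phi(1,0)\right\|$ in absolute value, with $\left\|\phi(1,0)\right\|$ controlled by Lemma \ref{lem10} and $|f'(\alpha_m)|$ by Lemma \ref{esug} --- but they choose opposite coordinates. The paper starts from $\prod_{l}(x/y-\alpha_l)=\pm y^{-4}$ and extracts a root $\alpha_j$ \emph{far} from $x/y$, namely one with $|x/y-\alpha_j|\ge 1/y$, and estimates $\phi_j(x,y)$ from below; this needs no input beyond the product formula. You instead take the \emph{closest} root $\alpha_i$ and import Lemma \ref{3S} to show $|x-\alpha_i y|\le 2^{3}4^{7/2}M(F)^{2}|D|^{-1/2}|y|^{-3}$, so that $-\phi_i(x,y)\ge 3\log|y|-c_1\log M(F)-c_2$. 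The extra input buys an explicit growth rate in $y$, which makes transparent exactly where the hypothesis $y\ge M(F)^{3.5}$ enters (the paper's two-line proof cites that hypothesis, but the crude consequence $|x-\alpha_j y|\ge 1$ taken literally does not depend on it). Your constant-tracking is sound: the dominant term $3\cdot 3.5\log M(F)$ beats the loss $c_1\log M(F)$ with $c_1\approx 2+O(1/k)$ coming from Lemma \ref{esug} and (\ref{mahD5}), as well as the bound $\frac{24}{k}\log M(F)+O(1)$ for $\left\|\phi(1,0)\right\|$, and all residual constants are absorbed into the hypothesis $D\ge D_0$ via (\ref{mahD5}); your two flagged subtleties (that one coordinate bounds the $L_2$ norm from below, and that $\phi_i$ is genuinely negative) are both handled correctly.
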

\begin{proof}
Let $\alpha_{1}$, $\ldots$, $\alpha_{n}$ be the roots of $F(z , 1) = 0$. Then 
$$
(\frac{x}{y} - \alpha_{1}) (\frac{x}{y} - \alpha_{2}) (\frac{x}{y} - \alpha_{3})(\frac{x}{y} - \alpha_{4}) = \frac{\pm 1}{y^4}.
$$
There must exist a root $\alpha_{j}$ so that $\left|\frac{x}{y} - \alpha_{j}\right| \geq \frac{1}{y}$. By Lemma \ref{esug} and since $y \geq M(F)^{3.5}$, the absolute value of the term $\phi_{j}(x , y)$ alone exceeds 
$\left\| \phi(1 , 0) \right\| $ (see Lemma \ref{lem10}).
\end{proof}

Recall that we assumed $F(x , y)$ is a monic form. Therefore $(1 , 0)$ is a trivial solution to the equation 
$F(x , y) = \pm 1$.

\bigskip

\textbf{Definition of the set $\mathfrak{A}$}.
 Assume that equation (\ref{1.25}) has more than $2r+2s -2$ solutions. Then we can list $2r+2s-3$ solutions $(x_{i} , y_{i}) \neq (1 , 0)$ ($1 \leq i \leq 2r+2s-3$), so that $r_{i} = \left\| \phi(x_{i} , y_{i})\right\|$ are the smallest among all $\left\| \phi(x , y)\right\|$, where $(x , y)$ varies over all non-trivial pairs of solutions. We denote  the set of all  these $2r+2s -3$ solutions  and the trivial solution $(1 , 0)$ by $\mathfrak{A}$.

  \begin{cor}\label{m}
  Let $(x , y) \not \in \mathfrak{A}$ be a solution to (\ref{1.25}) with $y > M(F)^{3.5}$. Then 
  $$
  \left\|\log \left( \tau(\lambda_{2})\right)\right\|  \leq  \ldots  \leq \left\| \log\left(\tau(\lambda_{r+s})\right)\right\| \leq 2\left\| \phi(x , y)\right\|.
   $$
     \end{cor}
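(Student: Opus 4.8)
The plan is to combine the lattice-representation inequality (\ref{mk}) with the defining minimality property of the set $\mathfrak{A}$ and the fact that $\lambda_2,\ldots,\lambda_{r+s}$ realize the successive minima of $\Lambda$. First I would recall that for a solution $(x,y)$ with $y>M(F)^{3.5}$ we have, by Lemma \ref{lem100}, $\|\phi(1,0)\|<\|\phi(x,y)\|$, hence by the triangle inequality
\[
\left\|\phi(x,y)-\phi(1,0)\right\|\le \left\|\phi(x,y)\right\|+\left\|\phi(1,0)\right\|<2\left\|\phi(x,y)\right\|.
\]
So it suffices to bound each $\left\|\log(\tau(\lambda_j))\right\|$, for $2\le j\le r+s$, by $\left\|\phi(x,y)-\phi(1,0)\right\|$.

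Next I would use the structure of the set $\mathfrak{A}$ together with a pigeonhole-type argument. Since $(x,y)\notin\mathfrak{A}$, and $\mathfrak{A}$ consists of the trivial solution together with the $2r+2s-3$ non-trivial solutions of smallest $\|\phi\|$-norm, there exist at least $2r+2s-3$ non-trivial solutions whose $\phi$-norm does not exceed $\|\phi(x,y)\|$. The point is that each such solution $(x',y')$ gives a lattice point $\phi(x',y')-\phi(1,0)=\sum_{k=2}^{r+s}m_k'\log(\tau(\lambda_k))\in\Lambda$ with norm at most $\left\|\phi(x',y')-\phi(1,0)\right\|\le 2\left\|\phi(x,y)\right\|$ (again by Lemma \ref{lem100} applied to $(x',y')$, noting those solutions also have large $y$, or by the size estimates forcing their $\phi$-norm to dominate $\|\phi(1,0)\|$). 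Because $\log(\tau(\lambda_2)),\ldots,\log(\tau(\lambda_{r+s}))$ are successive minima of the $(r+s-1)$-dimensional lattice $\Lambda$, to conclude $\left\|\log(\tau(\lambda_j))\right\|\le 2\|\phi(x,y)\|$ for all $j\le r+s$ it is enough to exhibit $r+s-1$ lattice points of norm $\le 2\|\phi(x,y)\|$ that are linearly independent over $\mathbb{R}$; the largest successive minimum is then bounded by the largest of their norms. The quantity $2r+2s-3$ of available solutions is comfortably at least $r+s-1$, but linear independence is not automatic, so this is where the argument needs care.

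The main obstacle is precisely guaranteeing linear independence of enough of these lattice vectors. I would handle it by an inductive/greedy selection: order the non-trivial solutions outside a small exceptional set by increasing $\phi$-norm and successively pick those whose associated lattice vector is not in the span of the previously chosen ones; a vector failing this test lies on one of finitely many proper subspaces, and one shows — using Lemma \ref{sma}, which bounds by $2r+2s-2$ the number of solutions whose top coordinate $m_{r+s}$ is fixed, and more generally the fact that solutions lying in any fixed affine subspace of $S$ are controlled by the number of intersections of the curve $\phi(t)$ with that subspace — that not all of the $2r+2s-3$ solutions in play can be trapped in a proper subspace. Hence among the first few solutions (all with $\phi$-norm $\le\|\phi(x,y)\|$) we extract $r+s-1$ with linearly independent lattice images, each of norm $\le 2\|\phi(x,y)\|$. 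Finally, by the characterization of successive minima (the $j$-th successive minimum is $\le$ the maximum of the norms of any $j$ linearly independent lattice vectors), we obtain
\[
\left\|\log(\tau(\lambda_j))\right\|\le 2\left\|\phi(x,y)\right\|\qquad\text{for } 2\le j\le r+s,
\]
which, together with the ordering $\left\|\log(\tau(\lambda_2))\right\|\le\cdots\le\left\|\log(\tau(\lambda_{r+s}))\right\|$, is exactly the claimed chain of inequalities.
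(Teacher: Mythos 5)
Your overall strategy --- bound the largest successive minimum by exhibiting short lattice vectors coming from the solutions in $\mathfrak{A}$ --- is in the right spirit, and your reductions (the triangle inequality plus Lemma \ref{lem100}, and $\|\phi(x',y')\| \le \|\phi(x,y)\|$ for $(x',y') \in \mathfrak{A}$ because $(x,y) \notin \mathfrak{A}$) are fine. But the step you yourself flag as ``needing care'' is a genuine gap, in two respects. First, as written the counting does not close: you work with the $2r+2s-3$ non-trivial solutions in $\mathfrak{A}$, so together with the base point $(1,0)$ you have only $2r+2s-2$ points on the curve $\phi(t)$, and Lemma \ref{sma} (or its extension to arbitrary hyperplanes of $S$, which its proof does yield) permits exactly $2r+2s-2$ solutions to lie on a single hyperplane; hence nothing prevents all of your candidate vectors $\phi(x',y')-\phi(1,0)$ from sitting in a proper subspace, and your greedy extraction of $r+s-1$ linearly independent vectors can fail. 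You must bring $(x,y)$ itself into the pool to get $2r+2s-1$ points and thereby force at least one off any given hyperplane. Second, Lemma \ref{sma} as stated only controls the hyperplanes $m_{r+s}=m$; your ``more generally'' appeal to arbitrary affine subspaces is plausible from the proof of that lemma but is not something the statement lets you cite.

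The paper closes the argument far more economically: it needs only \emph{one} short lattice vector with nonzero top coefficient, not $r+s-1$ independent ones. Since $\mathfrak{A}\cup\{(x,y)\}$ has $2r+2s-1$ elements and, by Lemma \ref{sma}, at most $2r+2s-2$ solutions can share the value $m_{r+s}(x,y)$, some $(x_0,y_0)\in\mathfrak{A}$ has a different last coordinate. Then $\phi(x,y)-\phi(x_0,y_0)$ is a lattice vector outside the span of $\log(\tau(\lambda_2)),\ldots,\log(\tau(\lambda_{r+s-1}))$, so the reduced-basis/successive-minima property gives $\|\log(\tau(\lambda_{r+s}))\|\le\|\phi(x,y)-\phi(x_0,y_0)\|$, while the minimality defining $\mathfrak{A}$ (together with Lemma \ref{lem100} in case $(x_0,y_0)=(1,0)$) bounds this difference by $2\|\phi(x,y)\|$. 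If you repair your independence step as indicated your route also works, but it is strictly more work for the same conclusion.
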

     \begin{proof}
     Since we have assumed that 
$  \left\|\log \left( \tau(\lambda_{2})\right)\right\|  \leq  \ldots  \leq \left\| \log\left(\tau(\lambda_{r+s})\right)\right\|
   $, it is enough to show that   $\left\|\log\left(\tau(\lambda_{r+s})\right)\right\| \leq 2 \left\| \phi(x , y)\right\|$. By Lemma \ref{sma}, there is at least one solution $(x_{0} , y_{0}) \in\mathfrak{A}$ so that 
   $$
   \phi(x , y) - \phi(x_{0} , y_{0}) = \sum_{i = 2}^{r+s} k_{i}\log\left(\tau(\lambda_{i})\right)  , 
  $$ 
   with $k_{n} \neq 0$. Since $\{ \log\left(\tau(\lambda_{i})\right)\}$ is a reduced basis for the lattice $\Lambda$ in (\ref{lo5}), we conclude that 
       $$
       \left\|\log\left(\tau(\lambda_{r+s})\right)\right\| \leq \left\| \phi(x , y) - \phi(x_{0} , y_{0})\right\| \leq  2\left\|\phi(x , y)\right\|.
       $$
   \end{proof}

  \begin{lemma}\label{Dr5}
Suppose  that  $(x , y)$ is a solution to (\ref{1.25}) with $y \geq M(F)^{3.5}$. We have
  $$
   \left\| \phi(x , y) \right\|\geq \frac{1}{2} \log \left(\frac{|D|^{1/12}}{2}\right).
  $$
   \end{lemma}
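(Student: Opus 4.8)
The plan is to bound $\|\phi(x,y)\|$ from below by exhibiting a single coordinate $\phi_j(x,y)$ whose absolute value is already large, since $\|\phi(x,y)\| \ge |\phi_j(x,y)|$ for every $j$. First I would use the hypothesis $|F(x,y)| = 1$ to write $\prod_{i=1}^4 |x/y - \alpha_i| = |y|^{-4}$, so that there is at least one root $\alpha_j$ with $|x/y - \alpha_j| \ge |y|^{-1}$; equivalently $|x - \alpha_j y| \ge 1$ (in fact at least two roots satisfy this, but one suffices). For that index $j$, returning to the definition
$$
\phi_j(x,y) = \log\left|\frac{D^{1/(4k)}(x - \alpha_j y)}{|f'(\alpha_j)|^{1/k}}\right|,
$$
I get $\phi_j(x,y) \ge \frac{1}{4k}\log|D| - \frac{1}{k}\log|f'(\alpha_j)|$. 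Now I invoke the right-hand (upper) bound of Lemma \ref{esug} for $|f'(\alpha_j)|$; since we may normalize so that the relevant conjugate has $|\alpha_j| \le$ something controlled by $M(F)$ (and in any case $\max(1,|\alpha_j|)^{n-1} \le M(F)^{\,?}$ up to the naive-height factor $H(f) \ll M(F)\cdot 2^n$ by the standard comparison of naive height and Mahler measure), I obtain $\log|f'(\alpha_j)| \le c_1 \log M(F) + c_2$ for explicit small constants. Feeding this in gives $\phi_j(x,y) \ge \frac{1}{4k}\log|D| - \frac{c_1}{k}\log M(F) - \frac{c_2}{k}$.

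Next I would convert the $\log M(F)$ term into something comparable to $\log|D|$ using Mahler's inequality (\ref{mahD5}): $M(F)^{2n-2} = M(F)^6 \ge |D|/4^4$, hence $\log M(F) \le \tfrac16(\log|D| + \log 4^4)$. Substituting, all terms become multiples of $\log|D|$ plus bounded constants, and the coefficient of $\log|D|$ works out to something like $\frac{1}{4k} - \frac{c_1}{6k}$, which is positive for the chosen $k = 90$ and the constants coming from $n = 4$. Choosing $D_0$ large enough to absorb the additive constants, I would be left with an inequality of the form $\phi_j(x,y) \ge \gamma \log|D|$ for an explicit $\gamma$; comparing $\gamma$ with $\tfrac{1}{24}$ (which is what $\tfrac12\log(|D|^{1/12}/2)$ amounts to up to the harmless $-\tfrac12\log 2$, again absorbed into $D_0$) yields the claim. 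The condition $y \ge M(F)^{3.5}$ is used exactly where Lemma \ref{lem10} and Lemma \ref{lem100} are invoked in the surrounding text, i.e. to guarantee $\|\phi(x,y)\| > \|\phi(1,0)\|$; here I may not even need the full strength of that, only the crude separation $|x - \alpha_j y| \ge 1$ above, but keeping the hypothesis makes the estimate uniform with the rest of Section \ref{AA}.

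The main obstacle I anticipate is bookkeeping rather than conceptual: making sure the coefficient of $\log|D|$ stays strictly positive after two lossy steps — the upper bound on $|f'(\alpha_j)|$ (which costs a factor $M(F)^{n-1}$) and the conversion $\log M(F) \le \tfrac16\log|D| + O(1)$ — and that the leftover constants (the $2^{-(n-1)^2}$, the $\sqrt3$, the $n^{(n+2)/2}$, the $\log 2$) are all genuinely absorbable into the single explicit threshold $D_0$. If the naïve pairing of coordinate $j$ with the upper bound for $|f'(\alpha_j)|$ turns out too wasteful, the fallback is to instead use the \emph{lower} bound for $|f'(\alpha_j)|$ from Lemma \ref{esug} on the \emph{other} coordinates to show $\|\phi(x,y)\|$ is large, or simply to note $\|\phi(x,y)\| \ge \|\phi(x,y) - \phi(1,0)\| - \|\phi(1,0)\|$ and bound the difference below by a single fundamental-unit contribution as in Corollary \ref{m} — but I expect the direct single-coordinate argument above to suffice with $k = 90$.
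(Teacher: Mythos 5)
There is a genuine gap, and it is concentrated in two places. First, you use Mahler's inequality (\ref{mahD5}) backwards: $M(F)^{6}\ge |D|/4^{4}$ is a \emph{lower} bound for $M(F)$ in terms of $|D|$, so it gives $\log M(F)\ge \tfrac16(\log|D|-\log 4^{4})$, not the upper bound $\log M(F)\le \tfrac16(\log|D|+\log 4^4)$ that your argument needs; no upper bound of that strength holds for a general irreducible quartic form (the Mahler measure can be arbitrarily large for fixed discriminant). Second, even if you grant that reversed inequality, the arithmetic fails: the upper bound of Lemma \ref{esug} costs $\log|f'(\alpha_j)|\le c_2+c_1\log M(F)$ with $c_1\approx n-1+1=4$ (three powers from $\max(1,|\alpha_j|)^{n-1}$ plus one from $H(f)\asymp M(F)$), so the coefficient of $\log|D|$ becomes $\tfrac{1}{4k}-\tfrac{c_1}{6k}=\tfrac{3-2c_1}{12k}<0$. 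Note that $k$ cancels from the sign, so "choosing $k=90$" cannot rescue this. The deeper reason the single-coordinate plan fails is that $\prod_m|f'(\alpha_m)|=|D|$ for monic $f$, so $\sum_m\phi_m(1,0)=0$; knowing only $|x-\alpha_j y|\ge 1$ gives $\phi_j(x,y)\ge\phi_j(1,0)$, which can perfectly well be negative. To make a coordinatewise argument work you would have to exploit $y\ge M(F)^{3.5}$ together with root separation to show $|x-\alpha_j y|\gg|\alpha_i-\alpha_j|\,|y|$ is genuinely large for $j$ away from the related root --- precisely the "full strength" you say you may not need.

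The paper's proof is different and avoids $M(F)$ and $f'$ entirely. For each pair $i<j$ it uses the identity
\begin{equation*}
\left|e^{\phi_i(1,0)-\phi_i(x,y)}-e^{\phi_j(1,0)-\phi_j(x,y)}\right|
=\left|\frac{1}{x-y\alpha_i}-\frac{1}{x-y\alpha_j}\right|
=\frac{|\alpha_i-\alpha_j|\,|y|}{|x-y\alpha_i|\,|x-y\alpha_j|},
\end{equation*}
bounds the left side above by $2e^{2\|\phi(x,y)\|}$ (using $|\phi_i|\le\|\phi\|$ and $\|\phi(1,0)\|\le\|\phi(x,y)\|$ from Lemma \ref{lem100}), and multiplies over all six pairs: the right side becomes $\sqrt{|D|}\,|y|^{6}/|F(x,y)|^{3}\ge\sqrt{|D|}$, since $\prod_{i<j}|\alpha_i-\alpha_j|=\sqrt{|D|}$ and $\prod_i|x-y\alpha_i|=1$. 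This yields $(2e^{2\|\phi(x,y)\|})^{6}\ge\sqrt{|D|}$, which is exactly the stated inequality. I would recommend abandoning the single-coordinate route and adopting this pairwise-product argument.
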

      \begin{proof}
   Let  $\alpha_{i}$ and $\alpha_{j}$ be two distinct roots of quartic polynomial $F(x , 1)$. We have
\begin{eqnarray*}
\left| e^{\phi_{i}(1 , 0) - \phi_{i}(x , y)} - e^{\phi_{j}(1 , 0) - \phi_{j}(x , y)}\right| & =& \left|\frac{1}{x - y\alpha_{i}} - \frac{1}{x - y\alpha_{j}}\right| \\
& = & \frac{\left|\alpha_{i} - \alpha_{j}\right| |y|}{|x - y\alpha_{i}||x - y\alpha_{j}|}\\
&\geq &\frac{\left|\alpha_{i} - \alpha_{j}\right| }{|x - y\alpha_{i}||x - y\alpha_{j}|}.
\end{eqnarray*}
   Since $|\phi_{i}| < \|\phi\|$ and $\left\| \phi(1, 0) \right\| \leq \left\| \phi(x , y) \right\|$, we may conclude
$$
\left( 2e^{2\left\| \phi(x , y) \right\|}\right)^6 \geq \prod_{1 \leq i < j\leq 4}\left|\frac{1}{x - y\alpha_{i}} - \frac{1}{x - y\alpha_{j}}\right| \geq \sqrt{D}.
$$
   \end{proof}
  %--------------------------------
  %%%%%%%%%%%%%%%%%%%%%%%%%%%%%%%%

%-----------------------------------------------------------------------------------------------------------------------------
%---------------------------------------------------------------------------------------------------------------------------
  \section{ Proof of Proposition \ref{P3} for Forms of signature $(4, 0)$}\label{p3}
  %-------------------------------------------------------------------------------------------------------------
  
  Let $(x , y) \neq (1 , 0)$ be a pair of solution to (\ref{1.25}) and
   \begin{equation*}
 \phi(x , y) = \phi(1 , 0) + \sum_{k = 2}^{r+s} m_{k}\log\left(\tau(\lambda_{k})\right), \qquad m_{k} \in \mathbb{Z}.
 \end{equation*}
Let us set $t = \frac{x}{y}$ and define 
  $$T_{i , j}(t) := \log \left|\frac{(t - \alpha_{i}) (\alpha_{4} - \alpha_{j})}{(t - \alpha_{j}) (\alpha_{4} -\alpha_{i})}\right|.
  $$
 Notice that
  \begin{eqnarray}\label{T5}
  T_{i,j} (x , y) = T_{i,j}(t) & = &  \log \left|\frac{\alpha_{4} - \alpha_{i}}{\alpha_{4} -\alpha_{j}}\right| + \log \left|\frac{t - \alpha_{j}}{t - \alpha_{i} }\right| \nonumber \\ 
  & = &  \log \left|\frac{\alpha_{4} - \alpha_{i}}{\alpha_{4} -\alpha_{j}}\right| + \log \left|\frac{x - \alpha_{j}y}{x - \alpha_{i} y }\right| \nonumber \\ 
  & = &  \log |\lambda_{i,j}| + \sum_{k=2}^{4} m_{i}\log \frac{|\lambda_{k}|} {|\lambda'_{k}|},
  \end{eqnarray}
  where $\lambda_{i,j} =  \log \left|\frac{ \alpha_{4} - \alpha_{i}}{\alpha_{4} -\alpha_{j}} \right| $ and $\lambda_{k}$ and $\lambda'_{k}$ are fundamental units in $\mathbb{Q}(\alpha_{j})$ and $\mathbb{Q}(\alpha_{i})$, respectively. 
   
 \begin{lemma}\label{Tu5}
Let $(x , y)$ be a pair of solution to (\ref{1.25}) with $|y| \geq M(F)^{3.5} $. Then there exists a pair $(i , j)$ for which 
 $$
\left|T_{i, j}(x , y) \right|  <  \exp\left(-\frac{ \left\| \phi(t) \right\|}{6}\right).$$
  \end{lemma}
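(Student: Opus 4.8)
The plan is to read the statement off from Lemma~\ref{gp15}, which controls the distance from $\phi(x,y)$ to the line $\mathbf{L_4}$, by re-expressing that distance in terms of the quantities $T_{i,j}$. First I would record the elementary geometry of the vectors in (\ref{E5}). With $\mathbf{c}_i=\mathbf{b}_i+\tfrac13\mathbf{b}_4$ one has $\mathbf{c}_i\perp\mathbf{b}_4$ for $i=1,2,3$ and $\mathbf{c}_1+\mathbf{c}_2+\mathbf{c}_3=0$. From (\ref{E5}) the orthogonal projection of $\phi(x,y)=\phi(t)$ onto $\mathbf{b}_4^{\perp}$ is $\sum_{i=1}^{3}\log\frac{|t-\alpha_i|}{|f'(\alpha_i)|^{1/k}}\,\mathbf{c}_i$, while the projection of the whole line $\mathbf{L_4}$ is the single point $\sum_{i=1}^{3}\log\frac{|\alpha_4-\alpha_i|}{|f'(\alpha_i)|^{1/k}}\,\mathbf{c}_i$; hence the distance from $\phi(x,y)$ to $\mathbf{L_4}$ is exactly $\bigl\|\sum_{i=1}^{3}g_i(t)\,\mathbf{c}_i\bigr\|$, where $g_i(t)=\log\frac{|t-\alpha_i|}{|\alpha_4-\alpha_i|}$.

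Next I would convert this into the $T_{i,j}$'s. By (\ref{T5}) we have $T_{i,j}(t)=g_i(t)-g_j(t)$, so that $T_{1,3}=T_{1,2}+T_{2,3}$, and expanding $\sum_{i=1}^{3}g_i(t)\,\mathbf{c}_i$ in coordinates gives the identity
$$
\Bigl\|\sum_{i=1}^{3}g_i(t)\,\mathbf{c}_i\Bigr\|^{2}
=\tfrac13\bigl(T_{1,2}(t)^{2}+T_{1,3}(t)^{2}+T_{2,3}(t)^{2}\bigr).
$$
Since $(x,y)\neq(1,0)$ is a solution related to $\alpha_4$ with $y\ge M(F)^{3.5}$, Lemma~\ref{gp15} applies and bounds the left–hand side by $\exp(-\|\phi(t)\|/3)$. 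Therefore $T_{1,2}(t)^{2}+T_{1,3}(t)^{2}+T_{2,3}(t)^{2}<3\exp(-\|\phi(t)\|/3)$, so at least one of the three squares is smaller than $\exp(-\|\phi(t)\|/3)=\bigl(\exp(-\|\phi(t)\|/6)\bigr)^{2}$; the corresponding pair $(i,j)\in\{(1,2),(1,3),(2,3)\}$ satisfies $|T_{i,j}(x,y)|<\exp(-\|\phi(t)\|/6)$, which is the claim.

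I expect the substance of the proof to be exactly this one–line pigeonhole, so the only care needed is bookkeeping. One should check that the base point of $\mathbf{L_4}$ in Lemma~\ref{gp15} is taken with the same normalization $|f'(\alpha_i)|^{1/k}$ that defines $\phi$, so that the two projections differ by precisely $\sum g_i\mathbf{c}_i$ with no stray fixed vector; and one should verify the coordinate identity above, equivalently $\|T_{1,3}\mathbf{c}_1+T_{2,3}\mathbf{c}_2\|^{2}=\tfrac23\bigl(T_{1,3}^{2}-T_{1,3}T_{2,3}+T_{2,3}^{2}\bigr)$, from the Gram matrix of $\mathbf{c}_1,\mathbf{c}_2$ (whose eigenvalues are $1$ and $\tfrac13$). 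The reason the constant in front of $\exp(-\|\phi(t)\|/6)$ comes out to exactly $1$, rather than a number larger than $1$, is the algebraic identity $a^{2}+b^{2}+(a-b)^{2}=2(a^{2}-ab+b^{2})$, which rewrites the distance bound as a plain sum of three squares and so loses nothing.
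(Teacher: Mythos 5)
Your argument is correct and is essentially the argument the paper intends: the paper's own proof of Lemma \ref{Tu5} simply cites Lemma \ref{gp15} together with Lemma 6.8 of \cite{AO5}, and your projection of (\ref{E5}) onto $\mathbf{b_4}^{\perp}$ followed by the Gram-matrix identity and the pigeonhole on the three squares is exactly how that deduction goes. Your side remark about normalization is well taken: the exponent $1/2$ in the displayed definition of $\mathbf{L_4}$ should indeed read $1/k$ to match (\ref{fi5}), and the hypothesis that $(x,y)$ is related to $\alpha_{4}$ is implicit in the statement since it is needed to invoke Lemma \ref{gp15}.
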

\begin{proof}
This is a consequence of Lemma \ref{gp15} and the proof goes exactly the same as the proof of
 Lemma 6.8 in \cite{AO5}.
\end{proof}

  Let index $\sigma $ be the isomorphism  from 
$\mathbb{Q}(\alpha_{i})$ to $\mathbb{Q}(\alpha_{j})$ such that $\sigma(\alpha_{i}) = \alpha_{j}$. We may assume that $\sigma(\lambda_{i}) = \lambda'_{i}$ for $i = 2, 3, 4$.  Let $(x_{1}, y_{1})$ , $(x_{2} , y_{2})$ , $(x_{3} , y_{3})$ be three distinct  solutions to (\ref{1.25})  with 
 $$
 y_{k} \geq M(F)^{3.5}
 $$
 and
 $$
\left| x_{k} - \alpha_{4} y_{k}\right|  = \min_{1\leq i \leq 4} \left| x_{k} - \alpha_{i} y_{k}\right|   \quad k \in \{1 , 2 , 3 \}.
$$
This assumption will lead us to a contradiction at the end of this section, implying that related to each real root of $F(x, 1) = 0$, there are at most $2$ solutions with $y \geq M(F)^{3.5}$.  Recall that related to a non-real root, there exists no such solution. 

 Put $r_{k} = \left\| \phi( x_{k} , y_{k}) \right\|$ and assume that 
$r_{1} \leq r_{2} \leq r_{3}$.
 We will apply  Matveev's lower bound to 
 $$
  T_{i,j}(x_{3} , y_{3}) =   \log |\lambda_{i,j}| + \sum_{k=2}^{r+s} m_{k}\log \frac{ |\lambda_{k}|} { |\lambda'_{k}|} , $$
where $(i , j)$ is chosen so that  Lemma \ref{Tu5} is satisfied and  $m_{k} \in \mathbb{Z}$. Forms of signature $(0 , 2)$ have one fundamental unit and forms of signature $(2 , 1)$ have two fundamental units.
 Moreover,  if $ \lambda_{i,j}$ is a unit then we can write $T_{i,j}(x , y) $ as a linear form in fewer number of  logarithms.  We remark here that when dealing with linear forms in $2$ logarithms, one can use sharper lower bounds (see for example \cite{Gou5}). 

Suppose that $\lambda$ is a unit in the number field and $\lambda'$ is its algebraic conjugate. We have
$$
h(\lambda') = h(\lambda)  = \frac{1}{8}\left|\log\left(\tau(\lambda)\right)\right|_{1} , 
$$
where $h$ is the logarithmic height and  $| \  |_{1}$ is the $L_{1}$ norm on $\mathbb{R}^{4}$ and the mappings $\tau$ and $\log$ are defined in (\ref{ta5}) and (\ref{lo5}) .  So we have
$$
h(\lambda) = \frac{1}{8}\left|\log\left(\tau(\lambda)\right)\right|_{1} \leq \frac{\sqrt{4}}{8} \left \| \log\left(\tau(\lambda)\right)\right\| ,
$$
where $\| \|$ is the $L_{2}$ norm on $\mathbb{R}^{r+s-1}$.  Since $\alpha_{4}$, $\alpha_{i}$ and $\alpha_{j}$ have degree $4$ over $\mathbb{Q}$, the number field $\mathbb{Q}(\alpha_{4}, \alpha_{i}, \alpha_{j})$ has degree $d \leq 24$ over $\mathbb{Q}$.
So when $\lambda$ is a unit
\begin{equation}\label{h15}
\max \{dh(\frac{\lambda}{\lambda'}) , \left|\log(\left|\frac{\lambda}{\lambda'}\right|)\right| \} \leq 
\max \{24 h(\frac{\lambda}{\lambda'}) , |\log(\left|\frac{\lambda}{\lambda'}\right|)| \} \leq 12 \left\|\log\left(\tau(\lambda)\right)\right\|.
\end{equation}
In order  to apply  Theorem \ref{mat5} to $T_{i,j}(x , y)$, we will take , for $k> 1$,
$$
A_{k} =  12 \left\|\log\left(\tau(\lambda)\right)\right\|.
$$
By (\ref{mk}) and Lemma \ref{lem100},  we have
 $$
  A_{k} \leq 24 \|\phi(x,y)\|,  \   \,    \textrm{for} \ \    k>1 .
 $$

Now we need to estimate $A_{1}$.
\begin{lemma}\label{EstimateOfHeightDelta}
Let $F$ be an irreducible  binary quartic form with integral coefficients.
Assume that $(x,y)$ is a solution to \textup{(\ref{1.25})} with $y \geq M(F)^{3.5}$.
Then, we have
\begin{equation*}
	h\left(\frac{\alpha_k-\alpha_i}{\alpha_k-\alpha_j}\right)
	\le
	2\log 2 + 2\|\phi(x,y)\|.
\end{equation*}
\end{lemma}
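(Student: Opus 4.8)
The plan is to reduce the statement, by subadditivity of the absolute logarithmic height, to the single inequality $\log M(F)\le 2\|\phi(x,y)\|$, and then to prove this inequality directly from the hypothesis $y\ge M(F)^{3.5}$.

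First I would bound $h\bigl(\tfrac{\alpha_k-\alpha_i}{\alpha_k-\alpha_j}\bigr)$ from above. By (\ref{bj25}) together with $h(\gamma^{-1})=h(\gamma)$ it is at most $h(\alpha_k-\alpha_i)+h(\alpha_k-\alpha_j)$, and by (\ref{bj15}) this is in turn at most $2\log 2+2h(\alpha_k)+h(\alpha_i)+h(\alpha_j)$. Since (as we may assume) $F$ is monic and irreducible, $F(x,1)$ is the minimal polynomial of every root $\alpha_m$, so the identity $h(\alpha_m)=\tfrac14\log M(\alpha_m)$ from Proposition \ref{hM5} gives $h(\alpha_m)=\tfrac14\log M(F)$ for each $m$. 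Hence the upper bound collapses to $2\log 2+\log M(F)$, and the whole lemma follows once we know $\log M(F)\le 2\|\phi(x,y)\|$.

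For that last inequality I would argue in the spirit of the proof of Lemma \ref{Gr}: if $(x,y)$ is related to the root $\alpha_l$, then for each of the three indices $m\ne l$ we have $|x/y-\alpha_m|\ge\tfrac12|\alpha_l-\alpha_m|$, hence $|x-y\alpha_m|\ge\tfrac12|y|\,|\alpha_l-\alpha_m|$; feeding in Mahler's separation bound (Lemma \ref{mahl5}) $|\alpha_l-\alpha_m|\ge\sqrt3\,4^{-3}M(F)^{-3}$ together with $|y|\ge M(F)^{3.5}$ yields $|x-y\alpha_m|\ge\tfrac{\sqrt3}{128}M(F)^{1/2}$. Substituting this into the definition (\ref{fi5}) of $\phi_m$, and using $|D|\ge 1$ together with the upper bound $|f'(\alpha_m)|\le 60\,M(F)^4$ coming from Lemma \ref{esug} (via $H(f)\le 6M(F)$ and $\max(1,|\alpha_m|)\le M(F)$), one finds, for $k$ large enough (the value $k=90$ suffices), $\phi_m(x,y)\ge(\tfrac12-\tfrac4k)\log M(F)-C$ for an absolute constant $C$, which exceeds $\tfrac25\log M(F)$ once $D_0$ is chosen large enough that $M(F)$ is large by (\ref{mahD5}). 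Since this holds simultaneously for all three $m\ne l$, $\|\phi(x,y)\|\ge\sqrt3\cdot\tfrac25\log M(F)>\tfrac12\log M(F)$, which is exactly what is needed.

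The step I expect to be the main obstacle is precisely this last lower bound for $\|\phi(x,y)\|$. The naive estimate obtained from Lemma \ref{Dr5}, namely $2\|\phi(x,y)\|\ge\tfrac1{12}\log|D|-\log 2$, is not enough here, because $|D|\le 256\,M(F)^6$ and so $\tfrac1{12}\log|D|$ can be much smaller than $\log M(F)$; one genuinely has to exploit the separation of the three roots that are "far" from $x/y$, and the assumption $y\ge M(F)^{3.5}$, to see that several coordinates of $\phi(x,y)$ are already of size comparable to $\log M(F)$.
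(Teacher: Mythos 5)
Your proof is correct, but it takes a genuinely different route from the paper's. The paper's own argument rewrites the quantity via the identity
\[
\frac{\alpha_k-\alpha_i}{\alpha_k-\alpha_j}=\frac{\beta_k-\beta_i}{\beta_k-\beta_j},\qquad \beta_m=x-y\alpha_m,
\]
(the factor $y$ cancels), applies (\ref{bj15}) and (\ref{bj25}) to get the bound $2\log 2+4h(\beta_k)$, and then uses the fact that $\beta_k$ is a \emph{unit}: its height is $\frac18\sum_{i=1}^{4}\bigl|\log|\beta_i|\bigr|=\frac18\sum_{i=1}^{4}\bigl|\phi_i(x,y)-\phi_i(1,0)\bigr|$, which by Cauchy--Schwarz is at most $\frac14\|\phi(x,y)-\phi(1,0)\|$, hence at most $\frac12\|\phi(x,y)\|$ by Lemma \ref{lem100}; the hypothesis $y\ge M(F)^{3.5}$ enters only through that lemma and no largeness of $M(F)$ beyond it is needed. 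You instead bound the height of the cross-ratio by the heights of the roots themselves, arriving at $2\log 2+\log M(F)$, and then prove the separate inequality $\log M(F)\le 2\|\phi(x,y)\|$ by showing that each of the three coordinates $\phi_m(x,y)$ with $m\ne l$ already exceeds $\frac25\log M(F)$ (root separation via Lemma \ref{mahl5}, the upper bound $|f'(\alpha_m)|\le 60\,M(F)^4$ from Lemma \ref{esug}, and $y\ge M(F)^{3.5}$); your constants check out for $n=4$, and your intermediate bound $2\log 2+\log M(F)$ is, given what you prove, at least as strong as the stated one. What your route costs is an extra largeness requirement, roughly $0.056\log M(F)>\log(128/\sqrt3)$, so $M(F)$ must exceed an absolute constant of size about $e^{78}$; this forces $D_0$ to be enlarged accordingly, which is harmless here since the condition imposed in Section \ref{p1} already requires $M(F)$ far larger, but it is a hypothesis not visible in the lemma's statement. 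What the paper's route buys is a proof that is free of any such threshold and that isolates the only place where $y\ge M(F)^{3.5}$ is genuinely used, namely Lemma \ref{lem100}.
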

\begin{proof}
Let  $\beta_i = x-y\alpha_i$.
We have
\begin{equation*}
	\frac{\alpha_k-\alpha_i}{\alpha_k-\alpha_j}
	=
	\frac{\beta_k-\beta_i}{\beta_k-\beta_j}.
\end{equation*}
Thus by (\ref{bj15}) and (\ref{bj25}),
\begin{equation*}
	h\left(\frac{\alpha_k-\alpha_i}{\alpha_k-\alpha_j}\right)
	\le 2\log 2 + 4h(\beta_k).
\end{equation*}
To complete the proof, we will show that 
$$
h(\beta_{k})\leq \frac{1}{2}\|\phi(x,y)\|.
$$
Set $$v_i = \log|\beta_i| = \phi_i(x,y)-\phi_i(1,0)\qquad \textrm{for}\qquad   i\in \{1,2,3, 4\}$$ and
$$\vec{v}=(v_1,v_2,v_3,v_4).$$
Since $\beta_k$ is a unit, we have
\begin{equation*}
	h(\beta_k) = \frac{1}{8}\sum_{i=1}^4 \left|v_i\right|
		= \frac{1}{8} (s_1, s_2, s_3 , s_4) \cdot \vec{v},
\end{equation*}
where $s_1, s_2, s_3, s_4\in\{+1, -1\}$.
Since $\|(s_1, s_2, s_3, s_4)\| = 2$, we obtain
\begin{equation*}
	h(\beta_k) \le \frac{1}{4}\|\vec{v}\|.
\end{equation*}
On the other hand,  we have
\begin{equation*}\label{EstimateOfV}
	\|\vec{v}\| \le \|\phi(x,y)\| + \|\phi(1,0)\|
		\le 2\|\phi(x,y)\|.
\end{equation*}
This completes our proof.
\end{proof}

Set, for $k \in \{1, 2, 3\}$,
$$
r_{k} = \|\phi(x_k, y_k)\|.
$$
We may take
$$
A_{1}  = 48\log 2+ 48 r_{1}
$$
(recall that $\alpha_{1}$, $\alpha_{i}$ , $\alpha_{j}$ are algebraic conjugates).
Since $m_{1} =1$, we will put
$$B = \max\{1 , \max\{m_{j}A_{j}/A_{1}: \  1\leq j  \leq 4\}\}.$$
 Since we assumed that $\tau(\lambda_{i})$, $2 \leq i \leq r+s-1$ are successive minimas for the lattice $\Lambda$, we have
$$
\left| m_{j}A_{j}\right| \leq r_{3} + \| \phi(1 , 0) \|  < 2 r_{3}.
$$
 Thus we may take  $B = \frac{ r_{3}}{12}$ (see (\ref{mk})).

Proposition \ref{mat5} implies that for a constant number $K$,
$$
\log T_{i, j} (x_{3} , y_{3}) > - K \,  r_{1}^4 \log r_{3}.
$$
Comparing this with Lemma \ref{Tu5}, we have
$$
 \left(\frac{-r_{3}}{6}\right)> -K\,   r_{1}^{4}\log r_{3},
$$
or
$$
 \frac{r_{3}} {\log r_{3}} < 6 K \, r_{1}^{4} .
$$
Thus there is a computable constant number $K_{1}$, so that
\begin{equation}\label{r55}
 r_{3} <  K_{1} \, r_{1}^{4} ,
\end{equation}
This is because $r_{3}$ is large enough  by Lemma \ref{Dr5}.
But by Lemma \ref{exg5} we have
$$
r_{3} > 0.00014 \exp\left(\frac{r_{1}}{6}\right).
$$
This is  a contradiction, for by Lemma \ref{Dr5},
$$
r_{1} \geq \frac{1}{2} \log \left( \frac{|D|^{\frac{1}{12}}}{2}\right)
$$
and $D$ is large. 
Thus, there are at most $2$ solutions $(x , y)$ with $y \geq M(F)^{3.5}$  related to each real  root $\alpha_{i}$. The proof of Proposition \ref{P3} for forms with signature $(4 , 0)$ is complete now.  This argument can be used for forms with signature $(2 , 1)$, as well and will give us an  bound of $17$ upon the number of solutions to (\ref{1.25}) for this case. In the next section, we will see that for forms of signature $(2 , 1)$ we do not need to consider the set $\mathfrak{A}$, as the lattice generated by the fundamental units of the corresponding number field is contained in a plane.

  %--------------------------------------------------------------------------------------------------------------
  \section{ Proof of Proposition \ref{P3} for Forms of signature $(2, 1)$}\label{p4}
  %-------------------------------------------------------------------------------------------------------------------
  
Let $F(x , y)$ be a quartic form of signature $(2 , 1)$.  Suppose that  $(x_{1} , y_{1})$, $(x_{2} , y_{2})$ and $(x_{3} , y_{3})$ are three pairs of non-trivial solution to (\ref{1.25}) with 
$$
\left| x_{j} - \alpha_{4} y_{j}\right| < 1,
$$
and $|y_{j}| >M(F)^{3.5} $,
for $j \in \{1 , 2 , 3\}$.         
 We will assume $\alpha_{4}$ is real (recall that related to a non-real root, there exists no  solution $(x , y)$ with $|y| >M(F)^{3.5} $).
The real number field 
 $\mathbb{Q}(\alpha_{4})$ has  two fundamental units
$\lambda_{2}, \lambda_{3}$
 chosen so that 
$\log\left(\tau(\lambda_{2})\right)  , \log \left(\tau(\lambda_{3})\right)$
are  successive minimas of the lattice $\Lambda$, with
\begin{equation}\label{paral}
  \left\|\log \left( \tau(\lambda_{2})\right)\right\|   \left\| \log\left(\tau(\lambda_{3})\right)\right\| \geq \frac{2}{\sqrt{3}}\textrm{Vol}(\Lambda),
   \end{equation}
where $\textrm{Vol} (\Lambda)$  is the volume of fundamental parallelepiped of lattice $\Lambda$.   
If $(x , y)$ is a  solution to $|F(x , y)| = 1$, then 
$$
\phi(x , y)  \in \phi(1 , 0) + \Lambda = \Lambda_{1}.
$$   
Note that 
$$
\textrm{Vol}(\Lambda) = \textrm {Vol}(\Lambda_{1}).
$$

  For distinct pairs of solution  $(x_{1} , y_{1})$, $(x_{2} , y_{2})$ and $(x_{3} , y_{3})$ , three vectors  $\phi(x_1 , y_1)$,    $\phi(x_2 , y_2)$ and    $\phi(x_3 , y_3)$       generate a sub-lattice of $\Lambda_{1}$ with the volume of fundamental parallelepiped equal to $2A$. 
   Therefore,
   \begin{equation}\label{AV}
   2 A \geq \textrm{Vol} (\Lambda_{1}) = \textrm{Vol} (\Lambda).
   \end{equation}
   On the other hand, by (\ref{up5}), $A$, the area of $\Delta$  is less than 
\begin{equation*}
2   \left\| \phi(x_{3} , y_{3}) \right\| \exp\left(\frac{-\left\| \phi(x_{1} , y_{1}) \right\|}{6}\right).
\end{equation*}
This, together with (\ref{AV}) gives
\begin{equation}\label{exg5alt}
\left\| \phi(x_{3} , y_{3}) \right\| > \frac{\textrm{Vol} (\Lambda)}{4}\exp\left(\frac{\left\| \phi(x_{1} , y_{1}) \right\|}{6}\right).
\end{equation}
Let us replace  Proposition \ref{exg5} by the above inequality when $F(x , y)$ has signature $(2, 1)$.
   %%%%%%%%%%%%%%%%%%%%%%%%%%%%%%%%%%%%%%%%%%
 %%%%%%%%%From Previous Section TO BE EDITED AND ADJUSTED

 Put $r_{k} = \left\| \phi( x_{k} , y_{k}) \right\|$ and assume that 
$r_{1} \leq r_{2} \leq r_{3}$.
 We will apply  Matveev's lower bound to 
 $$
  T_{i,j}(x_{3} , y_{3}) =   \log |\lambda_{i,j}| + \sum_{k=2}^{r+s} m_{k}\log \frac{ |\lambda_{k}|} { |\lambda'_{k}|} , $$
where $(i , j)$ is chosen so that  Lemma \ref{Tu5} is satisfied and  $m_{k} \in \mathbb{Z}$.  Similar to Section \ref{p3}, let index $\sigma $ be the isomorphism  from 
$\mathbb{Q}(\alpha_{i})$ to $\mathbb{Q}(\alpha_{j})$ such that $\sigma(\alpha_{i}) = \alpha_{j}$. We may assume that $\sigma(\lambda_{i}) = \lambda'_{i}$ for $i = 2, 3, 4$.    Recall that related to a non-real root, there exists no such solution. 

 We use our estimation from Section \ref{p3} for $A_{1}$ and $B$:
$$
A_{1}  = 48\log 2+ 48 r_{1}
$$
and
$$
B = \frac{ r_{3}}{12}.
$$

 For fundamental units  $\lambda_{k}$, with $k \in \{2 , 3\}$, using  (\ref{h15}), we may put
$$
A_{k} =  12 \left\|\log\left(\tau(\lambda)\right)\right\|.
$$

Theorem \ref{mat5} and (\ref{paral})  imply  that for a constant number $K$,
$$
\log T_{i, j} (x_{3} , y_{3}) > - K \,  r_{1}  \log r_{3} \textrm{Vol} (\Lambda).
$$
Comparing this with Lemma \ref{Tu5}, we have
$$
 \left(\frac{-r_{3}}{6}\right)> -K\,    r_{1}  \log r_{3} \textrm{Vol} (\Lambda),
$$
or
$$
 \frac{r_{3}} {\log r_{3}} < 6 K \,  r_{1}  \textrm{Vol} (\Lambda) .
$$
Thus, since $r_{3}$ is large enough  by Lemma \ref{Dr5}, there is a computable constant number $K_{2}$, so that
\begin{equation*}
 r_{3} <  K_{2} \,  r_{1}  \textrm{Vol} (\Lambda)
\end{equation*}
(compare this with (\ref{r55})). 
But by ( \ref{exg5alt}) we have
$$
r_{3} >   \frac{\textrm{Vol} (\Lambda)}{4} \exp\left(\frac{r_{1}}{6}\right).
$$
Consequently, for some positive constant $K_{3}$,
$$ 
r_{1} > K_{3}\,  \exp\left(\frac{r_{1}}{6}\right).
$$
This is  a contradiction by Lemma \ref{Dr5} and since the discriminant is large.
Thus, there are at most $2$ solutions $(x , y)$ with $y \geq M(F)^{3.5}$  related to each real  root $\alpha_{i}$. The proof of Proposition \ref{P3} and  Theorem \ref{main5} are complete now. 

\section{Acknowledgements} 
I would like to thank Professor Ryotaro Okazaki for his helpful comments  and suggestions.  
 Most of this work has been done during my visit to mathematisches forschungsinstitut oberwolfach and   Max-Planck institute for mathematics in Bonn.  I am very grateful to both institutes for support and hospitality.

%--------------------------------------------------------------------------------------
%--------------------------------------------------------------------------------------

%----------------------------------------------------------------------------------------------------------------------------------

\end{document}